\acrodef{hac}[HAC]{hybrid angle control}
\acrodef{coi}[COI]{center-of-inertia}
\acrodef{ib}[IB]{infinite bus}
\acrodef{sg}[SG]{synchronous generators}
\acrodef{wrt}[w.r.t.]{with respect to}
\acrodef{agas}[AGAS]{almost global asymptotic stability}
\acrodef{lhs}[LHS]{left-hand side}  
\acrodef{rhs}[RHS]{right-hand side}  
\acrodef{rocof}[RoCoF]{rate of change of frequency}
\providecommand{\n}[1]{\lVert#1\rVert}
\providecommand{\ubar}[1]{\underline{#1}}
\providecommand{\tx}[1]{\text{\upshape{#1}}}	
\providecommand{\m}[1]{{\mathrm{#1}}}%{{\mathrm{#1}}}
\newcommand*\dt[0]{\frac{\text{d}}{\text{d}t}} % time derivative d/dt
\providecommand{\mc}[1]{\mathcal{#1}}                                                  		% abbreviation for \mc
\newtheorem{proposition}{Proposition}
\newtheorem{remark}{Remark}
\newtheorem{definition}{Definition}
\newtheorem{theorem}{Theorem}
\newtheorem{lemma}{Lemma}
\begin{document}
\title{Hybrid Angle Control and Almost Global Stability of Grid-Forming Power Converters}

\author{Ali~Tayyebi,~Adolfo~Anta,~and~Florian~Dörfler%,~\IEEEmembership{Member,~IEEE,}% <-this % stops a space
	\thanks{A. Tayyebi (the corresponding author) is with the Austrian Institute of Technology, 1210 Vienna, Austria, and also with the Automatic Control Laboratory, ETH Zürich, 8092 Zürich, Switzerland, e-mail: ali.tayyebi-khameneh@ait.ac.at.}%
	\thanks{A. Anta is with the Austrian Institute of Technology, 1210 Vienna, Austria, e-mail: adolfo.anta@ait.ac.at.}%
	\thanks{F. Dörfler is with the Automatic Control Laboratory, ETH Zürich, 8092 Zürich, Switzerland, e-mail: dorfler@ethz.ch.}%
	\thanks{This work was partially funded by the independent research fund of the Austrian Institute for Technology, and ETH Zürich
	funds.}}
\maketitle
\begin{abstract}
This paper introduces a new grid-forming control for power converters, termed \ac{hac} that ensures the almost global closed-loop stability. 
\ac{hac} combines the recently proposed matching control with a novel nonlinear angle feedback reminiscent of (though not identical to) classic droop and dispatchable virtual oscillator controls. 
The synthesis of \ac{hac} is inspired by the complementary benefits of the dc-based matching and ac-based grid-forming controls as well as ideas from direct angle control and nonlinear damping assignment. 
The proposed \ac{hac} is applied to a high-fidelity nonlinear converter model that is connected to an infinite bus or a center-of-inertia dynamic grid models via a dynamic inductive line. 
We provide insightful parametric conditions for the existence, uniqueness, and global stability of the closed-loop equilibria. 
Unlike related stability certificates, our parametric conditions do not demand strong physical damping, on the contrary they can be met by appropriate choice of control parameters. 
Moreover, we consider the safety constraints of power converters and synthesize a new current-limiting control that is compatible with \ac{hac}. 
Last, we present a practical implementation of \ac{hac} and uncover its intrinsic droop behavior, derive a feedforward ac voltage and power control, and illustrate the behavior of the closed-loop system with publicly available numerical examples.%
\end{abstract}
% Note that keywords are not normally used for peerreview papers.
\begin{IEEEkeywords}
grid-forming converter control, current-limiting control, power system stability, hybrid angle control.
\end{IEEEkeywords}
\IEEEpeerreviewmaketitle
\section{Introduction}
%\IEEEPARstart{G}{eneration} 
The Generation technology in power system has been drastically changing in recent years. The increasing replacement of bulk \ac{sg} with converter-interfaced generation is transforming the power system to a so-called \emph{low-inertia} system. The stability aftermath of this transition is highlighted by significant inertia reduction, fluctuating actuation (i.e., volatile generation), and the potential adverse interactions due to the presence of adjacent timescales \cite{MDHHV18,TGAKD20,MOVAH19,QQYYB:19,CTGAKF19,fang2018inertia,poolla_placement_2018}, among others. The \emph{grid-forming} control concept is envisioned to address the aforementioned stability challenges, whereby the converter features frequency and voltage regulation, black-start, and load-sharing capabilities \cite{TDKZH18}. 
	
Several grid-forming control techniques have been recently proposed. \emph{Droop control} mimics the speed droop of \ac{sg}, controls the modulation angle proportional to the active power imbalance, and is widely recognized as the baseline solution \cite{CDA93,SDB13}. As a natural extension of droop control, the emulation of \ac{sg} dynamics and control led to \emph{virtual synchronous machine} (VSM) strategies \cite{ZW11,d2013virtual}. The recently proposed \emph{matching} control exploits structural similarities of the converter and \ac{sg}; and matches their dynamics by controlling the modulation angle according to the dc voltage \cite{cvetkovic_modeling_2015,huang2017virtual,CGD17,AJD18,AF20}. Furthermore, \emph{virtual oscillator control} (VOC) mimics the dynamical behavior of Li\'enard-type oscillators and \emph{globally} synchronizes a converter-based network \cite{JMAFD:15,SDJD17}. Recently, \emph{dispatchable virtual oscillator control} (dVOC) is proposed that ensures \emph{almost global} synchronization of a homogeneous network of oscillator-controlled inverters (with simplified dynamics) to pre-specified set-points consistent with the power flow equations \cite{CGBF19,GCBD19} (also see \cite{yu2020comparative} for a comparative transient stability assessment of dVOC and droop control). 

A comparison of the aforementioned control strategies reveals complementary benefits;  see \cite[Rem. 2]{TGAKD20}: dc-based matching techniques are robust \ac{wrt} the load-induced over-currents and ac-based techniques (droop, VSM, and especially dVOC) have superior transient performance. Here we leverage these complementary benefits and design a \emph{hybrid angle control} (\ac{hac}) which combines matching control and a nonlinear angle feedback (reminiscent of, though not identical to, droop control and dVOC) and is inspired by ideas from direct angle control \cite{AF20} and sign-indefinite nonlinear damping assignment \cite{OVME02,sarras2012asymptotic}. Our proposed controller \emph{almost globally} stabilizes the closed-loop converter dynamics when connected via an inductive line to either an \ac{ib} or a dynamic \ac{coi} grid model. We provide insightful parametric conditions for the existence, uniqueness, and \emph{almost global stability} of closed-loop equilibria. Last but not least, we take into account the converter \emph{safety constraints}, design a new \emph{current-limiting} control, and investigate its stability in combination with \ac{hac}.

In contrast to most other related works, we consider a high-fidelity converter model including an explicit representation of energy source dynamics, the dc bus, LC filter, line dynamics, \ac{coi} grid dynamics, and the converter set-points. In comparison to related stability certificates \cite{BSEO17,CT14,AF20}, our stability conditions do not demand strong physical damping, but they can be met by appropriate choice of control gains. 

Moreover, our complementary choice of the angle-dependent terms in the \emph{Lyapunov / LaSalle} function and in the \ac{hac} formulation overcomes the analysis obstacles arising from lack of damping in angle state. Finally, we conclude this paper with some extensions, namely: a practical implementation and droop behavior of the \ac{hac} is described, a feedforward ac voltage and power control is discussed, and the behavior of the closed-loop system is explored with publicly available numerical examples \cite{T20}.
\begin{figure*}[!b]
\centering
{\includegraphics[trim=0.23cm 0.2cm 0.25cm 0.2cm, clip,width=\textwidth]{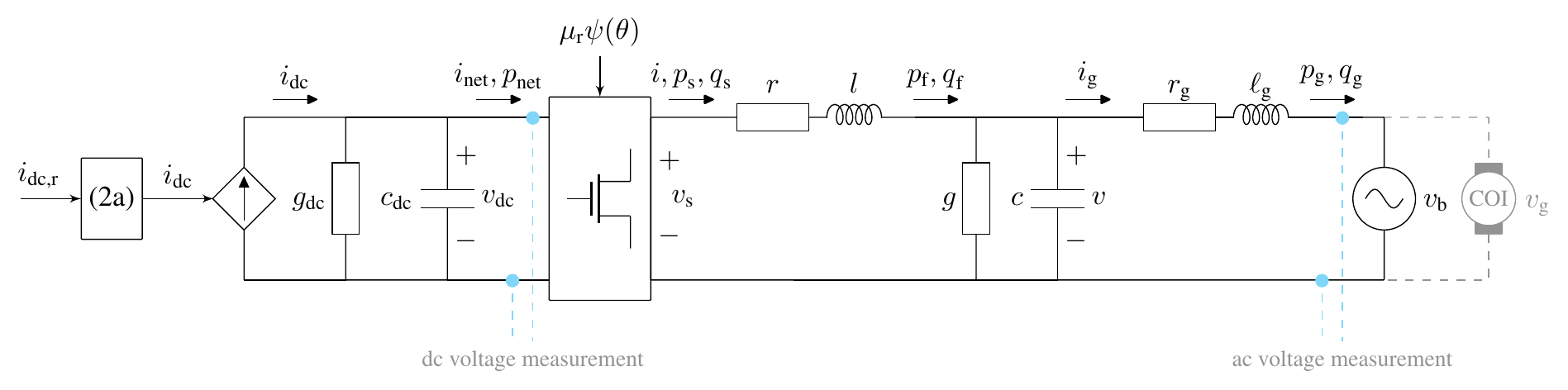}}
\caption{Schematic of the closed-loop system \eqref{eqs:sys}; see Figure \ref{fig:block diagram} for the control diagram defining $i_\tx{dc,r}$ and $\mu_\tx{r}\psi(\theta)$, and the Section \ref{subsec:COI} for the  description of the closed-loop system with the \ac{coi} grid model.\label{fig:SCIBsys}}
\end{figure*}

In addition, as an interesting technical contribution in its own right, this paper introduces an offbeat manifold space ---the boundary of a \emph{M\"obius strip}--- for studying the evolution of angle trajectories. A theoretical limitation of all (continuous control) systems with angles evolving on the circle is that they can at best achieve \ac{agas} due to the topological obstruction of the circle \cite{bhat2000topological}, which is a recurring theme in many of the aforementioned papers. Here we establish \ac{agas} of the angles on the boundary of a M\"obius strip, which results in \emph{global} asymptotic stability of the desired equilibrium when projected on the circle.

The remainder of this paper is structured as follows. Section \ref{sec:modeling} describes the model of a converter connected to an \ac{ib} and introduces the \ac{hac}. Section \ref{sec:closed-loop analysis} presents the closed-loop analysis and the main result of this work. Section \ref{sec:extensions} discusses two theoretical extensions: dynamic \ac{coi} grid model consideration and the design of a current-limiting control for grid-forming converters. Section \ref{sec:implementation} presents a practical \ac{hac} implementation, a complementary feedforward control, and \ac{hac}'s droop behavior. Next, we verify the performance of our controller via numerical examples in Section \ref{sec:simulation}. Last, a summary and outlook on future work are given in  Section \ref{sec:conclusion}.%
\section{Model Description}\label{sec:modeling}
%-----------------------------------------------------------
\subsection{Preliminaries and Notation}
In this paper, $\mathbb{R}$ denotes the set of real numbers, $\mathbb{R}_{>0}$ denotes the set of strictly positive real numbers and $\mathbb{R}_{[a,b]}  \coloneqq  \{ x \in \mathbb{R} : a \leq x \leq b \}$. The unit circle i.e., one-dimensional torus is denoted by $\mathbb{S}^1$. For the column vectors $ x \in \mathbb{R}^n $ and $ x \in \mathbb{R}^m $, $ ( x , y )  \coloneqq  
\begin{bmatrix}
x^\top , y^\top 
\end{bmatrix}^\top
\in \mathbb{R}^{ n + m } $ denotes the stacked vector, and $ \m{I} $ is the two-dimensional identity matrix. The vector and matrix of zeros are respectively denoted by $0_n$ and $ {0}_{ n \times m } $. The block diagonal matrix is denoted by $ {\mathrm{blkdiag}}( \m{A}_1 , \ldots , \m{A}_n ) $.
Furthermore, $ \n{\cdot} $ denotes the Euclidean norm operator. Last, given $ \varphi \in \mathbb{S}^1 $ we define $ \psi ( \varphi )  \coloneqq  \big( \cos( \varphi )  , \sin( \varphi ) \big) $.

In this work, similar to \cite{TGAKD20,AJD18,ZW11,MOVAH19} among others, we consider symmetric three-phase electric circuitry assuming identical electrical parameters for all three phases and that all three-phase quantities $ z_\tx{ abc }  \coloneqq  ( z_\tx{a} , z_\tx{b} , z_\tx{c} ) \in \mathbb{R}^3 $ are balanced i.e., $ z_\tx{a} + z_\tx{b} + z_\tx{c} = 0$. Under the latter assumption, a three-phase quantity $ z_\tx{abc} $ is transformed to the stationary $ \alpha \beta $-frame via magnitude preserving Clarke transformation i.e., $z_{\alpha\beta}=\m{C}z_\tx{abc}$ (see Appendix \ref{app:Clarke} for details). Moreover, the image of $z_{ \alpha \beta }$ in direct-quadrature (dq) coordinates that rotate with constant frequency $ \omega_\tx{f} \in \mathbb{R}_{>0}$ and the angle $ \theta_\tx{f} = \omega_\tx{f} t + \theta_\tx{f}(0) \in \mathbb{S}^1$ is given by $ z = \m{R}( \theta_\tx{f} ) z_{\alpha\beta}$ with
\begin{equation*}
\m{R}( \theta_\tx{f} ) \coloneqq 
\begin{pmatrix}
\cos(\theta_\tx{f}) & -\sin(\theta_\tx{f})
\\
\sin(\theta_\tx{f}) & \cos(\theta_\tx{f})
\end{pmatrix}.
%\begin{pmatrix} =  \psi(\theta_\tx{f}) & \psi(-\theta_\tx{f}) \end{pmatrix}\,.
\end{equation*}
%
%-----------------------------------------------------------
\subsection{Modeling the Connection of Converter and Stiff Grid}\label{subsec:CIB}
The \ac{ib} three-phase voltage is defined by
\begin{equation*}
v_\tx{b,abc}  \coloneqq  v_\tx{r} 
\big( \sin\left( \theta_\tx{b} \right) , \sin\left( \theta_\tx{b} - {2\pi} / {3} \right) , \sin\left( \theta_\tx{b} + {2\pi} / {3} \right) \big),
\end{equation*}
where $v_\tx{r} \in \mathbb{R}_{>0}$ is the nominal ac voltage magnitude, and 
\begin{equation}\label{eq: IB angle}
\theta_\tx{b}  \coloneqq  \omega_ 0 t + \theta_\tx{b}(0) \in \mathbb{S}^1
\end{equation}
is the \ac{ib} absolute angle with the nominal frequency $ \omega_{0} \in \mathbb{R}_{>0} $. 

We consider an average model of a three-phase two-level dc-ac converter \cite[Chap. 5]{YI10} and model the dc energy source by a first-order system that provides the input for a controlled dc current source. This is a reasonable coarse-grained model of the dc energy source e.g., see \cite[Sec. II.A]{TGAKD20}\cite[Sec. 6.4]{MO19}. The ac filter is modeled by an LC element. Moreover, the converter is interfaced to the \ac{ib} with an inductive line (that can also be seen as a low-voltage to medium voltage transformer model); Figure \ref{fig:SCIBsys} presents a schematic of the overall model.

The dynamical model of the converter-\ac{ib} system in $\alpha\beta$-frame is described by (see \cite[Sec. II]{AJD18}\cite[Chap. 5]{YI10} for a detailed derivation)
\begin{subequations}\label{eqs:converter}
	\begin{align}
	\tau_\tx{dc} \dot{i}_\tx{dc} & = i_\tx{dc,r} - i_\tx{dc},
	\label{eqs:converter0}
	\\
	c_\tx{dc} \dot{v}_\tx{dc} & = i_\tx{dc} - g_\tx{dc} v_\tx{dc} - m_{\alpha\beta}( \mu_\tx{r},\theta_\tx{c} )^\top  i_{\alpha\beta},
	\label{eqs:converter1}
	\\
	\ell \dot{i}_{\alpha\beta} & = v_\tx{dc} m_{\alpha\beta}( \mu_\tx{r},\theta_\tx{c} ) - r i_{\alpha\beta} - v_{\alpha\beta},
	\label{eqs:converter2}
	\\
	c \dot{v}_{\alpha\beta} & = i_{\alpha\beta} - g v_{\alpha\beta} - i_{\tx{g},{\alpha\beta}},
	\label{eqs:converter3}
	\\
	\ell_\tx{g} \dot{i}_{\tx{g},{\alpha\beta}} & = v_{\alpha\beta} - r_\tx{g} i_{\tx{g},{\alpha\beta}} - v_{\tx{b},{\alpha\beta}},
	\label{eqs:converter4}
	\end{align}
\end{subequations}
where $ \tau_\tx{dc}$ is the source time constant,  $ i_\tx{dc} \in \mathbb{R}$ is the dc source current, $ c_\tx{dc} $, $ v_\tx{dc} \in \mathbb{R} $, and $ g_\tx{dc} $ respectively denote the dc-link capacitance, voltage, and the dc conductance (that models the dc-side losses). 

Further, $ i_{\alpha\beta} $, $ v_{\alpha\beta} $, and $ i_{\tx{g},\alpha\beta} $ all take values in $\mathbb{R}^{2}$ and denote the current flowing through the filter inductance $\ell$, the voltage across the filter capacitance $c$, and the current through the line inductance $\ell_\tx{g}$. Lastly, $r$, $g$, and $r_\tx{g}$ model switching and conduction losses associated with the elements $\ell$, $c$, and $\ell_\tx{g}$, respectively. All parameters take positive and scalar values (due to the three-phase symmetry).

The modulation vector $m_{\alpha\beta}(\mu_\tx{r},\theta_\tx{c} ) \in \mathbb{R}_{ [ -1/2 , 1/2 ] } $ is $m_{\alpha\beta}( \mu_\tx{r},\theta_\tx{c} )  \coloneqq  \mu_\tx{r} \psi( \theta_\tx{c} )$ with reference magnitude $\mu_\tx{r} \in \mathbb{R}_{ [ 0 , 1/2 ] }$ and angle $\theta_\tx{c} \in \mathbb{S}^1$. In what follows, we will use the shorthand $m$ for $m_{\alpha\beta}( \theta_\tx{c},\mu_\tx{r} )$. The reference dc current in \eqref{eqs:converter0} is defined as 
\begin{equation}	\label{eq:i_dc}
i_\tx{dc,r} \coloneqq  i_\tx{r} - \kappa ( v_\tx{dc} - v_\tx{dc,r} ),
\end{equation}
where $i_\tx{r} \in \mathbb{R}$ denotes the open-loop dc current reference, $ \kappa \in \mathbb{R}_{>0} $ is the proportional gain of the dc voltage control, and $ v_\tx{dc,r} $ is the reference dc voltage. We remark that the forthcoming analysis also applies to the case with energy source being modeled as a stiff voltage source i.e., $\tau_\tx{dc} \to 0$ and $\kappa \to \infty$. 
%------------------------------------------------------------------
\subsection{Hybrid Angle Control and Closed-Loop Dynamics}\label{subsec:SCIB}
We synthesize a new grid-forming strategy ---hybrid angle control (\ac{hac})---  by combining the dc-based matching control (see e.g., \cite[Eq. 25]{CGD17}) and a nonlinear angle feedback reminiscent of ---though not identical to--- droop control and dVOC (see e.g., \cite{SDB13,yu2020comparative} and Remark \ref{rem:variants} for details). Defining the converter relative angle \ac{wrt} the \ac{ib} as
\begin{equation}	\label{eq:relative angle}
\theta  \coloneqq  \theta_\tx{c} - \theta_\tx{b},
\end{equation}
 the \ac{hac} takes the form
\begin{equation}	\label{eq:omega_c}
\dot{\theta}_\tx{c}=\omega_\tx{c}  \coloneqq  \omega_0 + \eta ( v_\tx{dc} - v_\tx{dc,r} )  - \gamma \sin\left( \dfrac{ \theta - \theta_\tx{r} } { 2 } \right),
\end{equation}
where $\eta\in\mathbb{R}_{\geq0}, \gamma \in \mathbb{R}_{>0}$ are the control parameters and $\theta_\tx{r}$ denotes the reference relative angle (see Proposition \ref{prop:consistent references} on the implicit choice of $\theta_\tx{r}$ via deriving $\psi(\theta_\tx{r})$ from given set-points). Since the angle term in the \ac{rhs} of \eqref{eq:omega_c} is $4\pi$-periodic, the state ${\theta}_\tx{c}$ evolves on the set $ \mathbb{M}  \coloneqq  [ -2 \pi , 2 \pi ] $ with $\pm 2 \pi$ identified with each other. The terminology is due to $\mathbb M$ being the boundary of the M\"obius strip; see Remark \ref{rem:mobius} and Figure \ref{fig:Mobius}. This geometric curiosity will lead to profound insights later on.

Transforming the ac quantities in \eqref{eqs:converter2}-\eqref{eqs:converter4} to the dq-frame aligned with the \ac{ib} angle $\theta_\tx{b}$, we define the state vector
\begin{equation}\label{eq:states}
x  \coloneqq  ( \theta , i_\tx{dc}, v_\tx{dc} , {i} , {v} , {i}_\tx{g} ) \in \mathbb{X}  \coloneqq  \mathbb{M} \times \mathbb{R}^8
\end{equation}
and obtain the overall closed-loop dynamics \eqref{eqs:converter}-\eqref{eq:omega_c} as
\begin{subequations}	\label{eqs:sys}
	\begin{align}
	\dot{\theta} & = \omega_\tx{c} - \omega_0 = \eta ( v_\tx{dc} - v_\tx{dc,r} ) - \gamma \sin \big( { (\theta - \theta_\tx{r}) } / {2} \big),
	\label{eqs:sys1}
	\\
	\tau_\tx{dc} \dot{i}_\tx{dc} & = i_\tx{r} - \kappa ( v_\tx{dc} - v_\tx{dc,r} ) - i_\tx{dc},
	\label{eqs:sys1+}
	\\
	c_\tx{dc} \dot{v}_\tx{dc} & = i_\tx{dc} - g_\tx{dc} v_\tx{dc} - m^\top  i,
	\label{eqs:sys2}
	\\
	\ell \dot{{i}} & = v_\tx{dc} m - \m{Z} i - v,
	\label{eqs:sys3}
	\\
	c \dot{v} & = i - \m{Y} v - i_\tx{g},
	\label{eqs:sys4}
	\\
	\ell_\tx{g} \dot{i}_\tx{g} & = v - \m{Z}_\tx{g} i_\tx{g} - v_\tx{b},
	\label{eqs:sys5}
	\end{align} 
\end{subequations}
here $m=\mu_\tx{r}\psi(\theta)$, $\m{Z}  \coloneqq  r \m{I} - \ell \omega_0 \m{J}$ denotes the filter impedance matrix with $ \m{J}  \coloneqq  
\left( \begin{smallmatrix} 
0 & 1 \\ -1 & 0 
\end{smallmatrix} \right) $, $\m{Y}  \coloneqq  g\m{I} - c\omega_0 \m{J}$ is the shunt admittance matrix, 
$\m{Z}_\text{g}  \coloneqq  r_\text{g} \m{I} - \ell_\text{g}\omega_0\m{J}$ is the grid impedance matrix, 
and $ v_\tx{b}  \coloneqq  (v_\tx{r} ,0) $. 

For notational convenience we  respectively define the net dc current and power transferred to the converter ac-side as $i_\tx{net} \coloneqq  m^\top{i}$ and $p_\tx{net}:=v_\tx{dc}i_\tx{net}$. The ac active and reactive power injections at the switching node, the filter capacitance, and \ac{ib} nodes in Figure \ref{fig:SCIBsys} are respectively defined by $p_\tx{s}:=i^\top v_\tx{s}$ with $v_\tx{s} \coloneqq  v_\tx{dc}m$, $q_\tx{s}:=i^\top \m{J} v_\tx{s}$, $p_\tx{f}:=i^\top v$, $q_\tx{f}:=i^\top \m{J} v$ and $p_\tx{g}:=i_\tx{g}^\top v_\tx{b}$, $q_\tx{g}:=i_\tx{g}^\top \m{J} v_\tx{b}$ \cite[Sec. 4.6]{YI10}. Last, note that the \ac{rhs} of \eqref{eqs:sys} is continuously differentiable in $\mathbb{X}$ and the main nonlinearity aside \eqref{eqs:sys1} is represented by the modulated terms in \eqref{eqs:sys2} and \eqref{eqs:sys3} with their power-preserving structure (assuming lossless dc-ac conversion) i.e., $p_\tx{net}=p_\tx{s}$ \cite{AF20}. We close this section with three remarks on $\mathbb{M}$ and the \ac{hac}.
%-------------------------------------------------------------------------
\begin{remark}\textup{(M\"obius strip)}	\label{rem:mobius}	\\
The angle term in \eqref{eq:omega_c} is $4\pi$-periodic and thus multi-valued on $\mathbb{S}^1$. Hence, we study the evolution of angle trajectories in $ \mathbb{M}$. One representation of $ \mathbb{M} $ is the compact boundary of M\"obius strip parametrized in $\mathbb R^{3}$ by $\sigma(w,\varphi)$ with coordinates
	\begin{subequations}
		\begin{align*}
%		\sigma(w,\varphi) &  \coloneqq  \big( \sigma_1( w , \varphi ) , \sigma_2( w , \varphi ) , \sigma_3( w , \varphi ) \big),
%		\\
		\sigma_1( w , \varphi ) &  \coloneqq  \big( \rho - w \cos\left( {\varphi} / {2} \right) \big) \cos\varphi,
		\\
		\sigma_2( w , \varphi ) &  \coloneqq  \big( \rho - w \cos\left( {\varphi} / {2} \right) \big) \sin\varphi,
		\\
		\sigma_3( w , \varphi ) &  \coloneqq  w \sin\big( {\varphi} / {2} \big),
		\end{align*}
	\end{subequations}
	where $ \rho \in \mathbb{R}_{>0} $ is the middle circle radius, $w$ denotes the half-width with $|w| \leq 1 / 2$, and $ \varphi \in \mathbb{S}^1$. Figure \ref{fig:Mobius} illustrates a parametrization of $\sigma(w,\varphi)$ such that $|\partial\sigma(w,\varphi)|=4\pi$ where $\partial\sigma(w,\varphi)$ denotes the boundary of m\"obius strip. %Consider a M\"obius strip of length $L$, then for an arbitrary origin on $\partial\sigma( w , \varphi )$, the arc length $ l \in [ -L/2 , L/2 ] $ is translated to $ \theta \in \mathbb{M}$ by $\mc{M} : \partial\sigma( w , \varphi ) \rightarrow \mathbb{M} , \mc{M}(l)  \coloneqq  ( {4\pi} / {L}) l $.
\end{remark}
%--------------------------------------------------------------------------------------
\begin{figure}[b!]
	\centering	
	{\includegraphics[trim=0.25cm 1.25cm 0.8cm 0.75cm, clip,width=0.75\columnwidth]{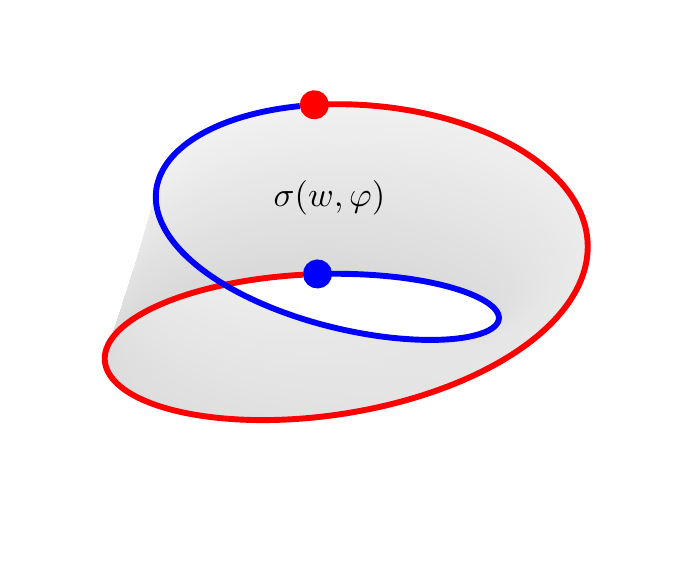}}
	\caption{The boundary of the M\"obius strip represents the angle space of \eqref{eqs:sys}. 
	 The arcs contained in the boundary segments colored in blue and red respectively represent 
	 %arcs connecting the blue-colored points are the images of 
	 the angles in $[-2\pi,0]$ and $[0,2\pi]$.
	}\label{fig:Mobius}
\end{figure}
%-------------------------------------------------------------------------------------
\begin{remark}\textup{(\ac{hac} synthesis and properties)}\\
The \ac{hac} \eqref{eq:omega_c} is inspired by direct angle control \cite{AF20}, and blending the dc and ac dynamics in a grid-forming control design \cite[Rem. 2]{TGAKD20}. The angle feedback in \eqref{eq:omega_c} is an odd function and injects angle damping, i.e., it provides dissipation for the angle in \eqref{eqs:sys} in contrast to other control approaches, where the angle variable acts as a mere integrator. \ac{hac} offers two degrees of freedom ($\eta$ and $\gamma$) for optimal frequency tuning. Section \ref{sec:implementation} presents a practical implementation of \ac{hac}. 	The ratio of control gains $\eta$ and $\gamma$ not only trades off the \ac{hac} dependency on dc or ac dynamics, but also influences its power-frequency droop slope (see Proposition \ref{prop:droop slope}) and transient performance; see Subsection \ref{subsec:frequency performance} for a numerical example. 
%\fdmargin{I added ``and transient performance'' -> should we do some simulations to see how these parameters affect some metrics of interest?}
\end{remark}
%-----------------------------------------------------------------------------------
%--------------------------------------------------------------------------------------
\begin{remark}\textup{{(\ac{hac} variants)}}	\label{rem:variants}	\\
It is noteworthy that with particular parameter choices in \eqref{eq:omega_c}, \ac{hac} recover several grid-forming controls. For instance, choosing $ \eta = 0 $ in \eqref{eq:omega_c} leads to a pure angle feedback control i.e., $\omega_\tx{c} = \omega_0 - \gamma \sin\big( (\theta - \theta_\tx{r}) / 2 \big)$ reminiscent of the droop control \cite[Sec. III-C]{TGAKD20}\cite{SDB13}. Indeed, the droop control is described by $\omega_\tx{c}: = \omega_{0} + d_{p-\omega} \big( p_\tx{r} - p\big)$, with the droop gain $d_{p-\omega}\in\mathbb{R}_{>0}$ (see Proposition \ref{prop:droop slope} for a definition), power reference $p_\tx{r}$, and $ p\coloneqq i_\tx{g}^\top v$ being the measured power that (with the assumptions in \cite{SDB13}) is proportional to $\sin( \theta )$. Likewise, \ac{hac} relates to dVOC, since around the equilibrium  dVOC dynamics (in polar coordinates) reduces to a form that resembles droop control \cite{yu2020comparative,seo2019dispatchable}. Further, the dc term with $\eta = \omega_0 / v_\tx{dc,r}$ recovers the standard matching control \cite{AJD18} combined with the angle term i.e., $\omega_\tx{c}=\eta v_\tx{dc} - \gamma \sin\big((\theta-\theta_\tx{r})/2\big)$. Last, if $\eta \neq \omega_0 / v_\tx{dc,r}$ the dc term in \eqref{eq:omega_c} is identical to the matching variant in \cite{CGD17}.%[Eq. 25]     
\end{remark}
%--------------------------------------------------------------------------------
\section{Closed-Loop Analysis}\label{sec:closed-loop analysis}
In what follows, we analyze the closed-loop system \eqref{eqs:sys} and provide suitable parametric conditions for the existence, uniqueness, and global stability of the closed-loop equilibria.
%------------------------------------------------------------------------------
\subsection{Existence of Equilibria}
Provided that the dc voltage meets its reference in steady state, we will establish that the closed-loop system \eqref{eqs:sys} admits a unique equilibrium set containing two disjoint equilibria
\begin{equation}	\label{eq:EqSet}
\Omega^\star  
\coloneqq  
\big\{ 
x^\star_\tx{s}  \coloneqq  \left( \theta^\star_1 , {y}^{\star} \right) ,
x^\star_\tx{u}  \coloneqq  \left( \theta^\star_2 , {y}^{\star} \right) 
\big\},
\end{equation}
where $ \theta_1^\star   \coloneqq   \theta_\tx{r} $, $ \theta_2^\star   \coloneqq   \theta_\tx{r} + 2\pi $, and  $y^{\star}  \coloneqq  \left( {i}^{\star}_\tx{dc} , {v}^{\star}_\tx{dc} , {i}^{\star} , {v}^{\star} , {i}^{\star}_\text{g} \right)$ is the unique equilibrium of the states evolving in $\mathbb{R}^8$.
%-------------------------------------------------------------------------------
\begin{theorem}\textup{{(Existence of equilibria)}}\label{TH1}\\
The closed-loop system \eqref{eqs:sys} admits two equilibria if  there exist a consistent reference $i_\tx{r}=i^\star_\tx{dc}$ in \eqref{eqs:sys1+} such that $v_\tx{dc}^\star = v_\tx{dc,r} $. These disjoint equilibria only differ by their equilibrium angles being equal to $\theta_\tx{r}$ or $\theta_\tx{r} + 2\pi$, i.e., they are of the form \eqref{eq:EqSet}.
\end{theorem}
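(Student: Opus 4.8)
The plan is to impose the equilibrium conditions (set every left-hand side of \eqref{eqs:sys} to zero) together with the standing hypothesis $v_\tx{dc}^\star=v_\tx{dc,r}$ supplied by the existence of a consistent reference, and to show that these conditions pin the equilibria down to exactly the two points of $\Omega^\star$ in \eqref{eq:EqSet}.

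First I would exploit the angle equation. Substituting $v_\tx{dc}^\star=v_\tx{dc,r}$ into the steady-state form of \eqref{eqs:sys1} annihilates the dc term and leaves $\gamma\sin\!\big((\theta^\star-\theta_\tx{r})/2\big)=0$; as $\gamma>0$ this is equivalent to $\theta^\star-\theta_\tx{r}\in 2\pi\mathbb{Z}$. Since $\theta$ evolves on $\mathbb{M}$, i.e., modulo $4\pi$ after the $\pm2\pi$ identification, this residue class meets $\mathbb{M}$ in exactly the two points $\theta_1^\star=\theta_\tx{r}$ and $\theta_2^\star=\theta_\tx{r}+2\pi$, which are distinct because $2\pi\notin 4\pi\mathbb{Z}$ but collapse to a single point when pushed onto $\mathbb{S}^1$. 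Because $\psi$ is $2\pi$-periodic, $m=\mu_\tx{r}\psi(\theta_1^\star)=\mu_\tx{r}\psi(\theta_2^\star)$, so the remaining steady-state equations \eqref{eqs:sys1+}--\eqref{eqs:sys5} for $y^\star=(i_\tx{dc}^\star,v_\tx{dc}^\star,i^\star,v^\star,i_\tx{g}^\star)$ do not depend on which of the two angles is selected; both equilibria therefore share the same $y^\star$, exactly the structure of \eqref{eq:EqSet}.

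It then remains to show $y^\star$ exists and is unique. With $v_\tx{dc}$ frozen at $v_\tx{dc,r}$ and $m$ fixed, \eqref{eqs:sys3}--\eqref{eqs:sys5} become a linear system $A\,(i,v,i_\tx{g})=b$ with $A=\left( \begin{smallmatrix} -\m{Z} & -\m{I} & 0 \\ \m{I} & -\m{Y} & -\m{I} \\ 0 & \m{I} & -\m{Z}_\tx{g} \end{smallmatrix} \right)$ and a constant right-hand side $b$ assembled from $v_\tx{dc,r}m$ and $v_\tx{b}$. Crucially, $\m{Z},\m{Y},\m{Z}_\tx{g}$ --- hence every block of $A$ --- lie in the commutative algebra $\{a\m{I}+b\m{J}:a,b\in\mathbb{R}\}$, which is a field isomorphic to $\mathbb{C}$ with $\m{J}$ in the role of the imaginary unit; consequently $A$ is invertible iff the associated complex number $\det A$, which up to sign equals $\m{Z}\m{Y}\m{Z}_\tx{g}+\m{Z}+\m{Z}_\tx{g}$, is nonzero. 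I would establish this via the passivity factorization $\m{Z}\m{Y}\m{Z}_\tx{g}+\m{Z}+\m{Z}_\tx{g}=(\m{I}+\m{Y}\m{Z}_\tx{g})\,\m{Z}_\tx{in}$, where $\m{Z}_\tx{in}=\m{Z}+\m{Z}_\tx{g}(\m{I}+\m{Y}\m{Z}_\tx{g})^{-1}$ is the driving-point impedance seen from the switching node, namely $\m{Z}$ in series with the parallel connection of $\m{Y}^{-1}$ and $\m{Z}_\tx{g}$: the strictly positive losses $r,r_\tx{g},g>0$ make $\mathrm{Re}(\m{Z}),\mathrm{Re}(\m{Z}_\tx{g}),\mathrm{Re}(\m{Y}^{-1})>0$, whence a series/parallel combination keeps $\mathrm{Re}(\m{Z}_\tx{in})>0$ and $\m{Z}_\tx{in}\neq0$, while $\m{I}+\m{Y}\m{Z}_\tx{g}=0$ would force $\m{Z}_\tx{g}=-\m{Y}^{-1}$, contradicting $\mathrm{Re}(\m{Z}_\tx{g})>0$. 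Hence $A$ is invertible, $(i^\star,v^\star,i_\tx{g}^\star)$ is unique, \eqref{eqs:sys2} then gives $i_\tx{dc}^\star=g_\tx{dc}v_\tx{dc,r}+m^\top i^\star$ uniquely, and finally steady-state \eqref{eqs:sys1+} reads $i_\tx{r}=i_\tx{dc}^\star$ --- precisely the ``consistent reference'' of the hypothesis, so the hypothesis amounts to picking $i_\tx{r}$ equal to this computed value (and the stiff-source limit $\tau_\tx{dc}\to0$, $\kappa\to\infty$ merely degenerates \eqref{eqs:sys1+}--\eqref{eqs:sys2} harmlessly while still fixing $v_\tx{dc}\equiv v_\tx{dc,r}$).

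The only step with genuine content is the nonsingularity of $A$; everything else is bookkeeping. I therefore expect the main obstacle to be exactly that --- showing the network matrix is invertible without invoking any small-signal or strong-damping assumption --- which the commutativity of the $\m{I},\m{J}$ blocks reduces to the nonvanishing of a single complex impedance, delivered by passivity. A minor care point is cleanly phrasing the mildly circular role of $i_\tx{r}=i_\tx{dc}^\star$; and if one wanted the stronger statement that $\Omega^\star$ is the \emph{entire} equilibrium set, one would additionally have to exclude equilibria with $v_\tx{dc}\neq v_\tx{dc,r}$, a separate power-balance/monotonicity argument where the forthcoming parametric conditions enter.
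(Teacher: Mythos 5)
Your proposal is correct and reaches the same conclusion by the same overall skeleton (freeze $v_\tx{dc}^\star=v_\tx{dc,r}$, read off the two angle roots on $\mathbb{M}$, reduce the ac part to a linear system $\m{A}(i^\star,v^\star,i_\tx{g}^\star)=b$), but the one step with content --- invertibility of $\m{A}$ --- is argued by a genuinely different route. The paper simply observes that the symmetric part $\tfrac{1}{2}(\m{A}+\m{A}^\top)$ is negative definite: the off-diagonal $\pm\m{I}$ blocks cancel pairwise and the skew parts of $\m{Z},\m{Y},\m{Z}_\tx{g}$ drop out, leaving $\mathrm{blkdiag}(-r\m{I},-g\m{I},-r_\tx{g}\m{I})\prec 0$, which is a one-line dissipativity argument requiring only $r,g,r_\tx{g}>0$. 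You instead exploit the commutative subalgebra $\{a\m{I}+b\m{J}\}\cong\mathbb{C}$ to collapse the block determinant to the scalar impedance $\m{Z}\m{Y}\m{Z}_\tx{g}+\m{Z}+\m{Z}_\tx{g}$ and show it is nonzero via the driving-point-impedance factorization and positivity of real parts; your computation checks out (including the factorization $(\m{I}+\m{Y}\m{Z}_\tx{g})\m{Z}_\tx{in}$ and the parallel-combination step), and it yields the nice physical interpretation that the network seen from the switching node is a passive one-port. The trade-off is that your argument leans on the commutativity of the $\m{I},\m{J}$ blocks (i.e., the balanced three-phase symmetry), whereas the paper's symmetric-part argument would survive non-commuting or non-scalar parameter blocks. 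Two further points in your write-up are welcome refinements rather than deviations: you make explicit that $2\pi$-periodicity of $\psi$ forces both angle equilibria to share the same $y^\star$, and you correctly untangle the mildly circular role of $i_\tx{r}=i_\tx{dc}^\star$ by computing $i_\tx{dc}^\star=g_\tx{dc}v_\tx{dc,r}+m^\top i^\star$ from the dc-link balance first and then reading the hypothesis as the choice $i_\tx{r}=i_\tx{dc}^\star$; the paper's proof obtains $i_\tx{dc}^\star=i_\tx{r}$ directly from the source equation and leaves the dc-link balance as the implicit consistency constraint.
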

%------------------------------------------------------------------------------
\begin{proof}
	We begin by setting the \ac{rhs} of \eqref{eqs:sys} to zero%
	\begin{subequations}\label{eqs:sys_ss}%
		\begin{align}%
		\eta(v_\tx{dc}^\star - v_\tx{dc,r}) - \gamma \sin\big((\theta^\star-\theta_\tx{r})/2\big)&=0,
		\label{eqs:sys_ss_theta}
		\\
		i_\tx{r}-\kappa(v^\star_\tx{dc}-v_\tx{dc,r})-i^\star_\tx{dc}&=0,
		\label{eqs:sys_ss_idc}
		\\
		i_\tx{dc}^\star-g_\tx{dc}v^\star_\tx{dc}-m(\theta^\star)^\top i^\star&=0,
		\label{eqs:sys_ss_vdc}
		\\
		v^\star_\tx{dc}m(\theta^\star)-\m{Z}i^\star-v^\star&=0_2,
		\label{eqs:sys_ss_i}
		\\
		i^\star-\m{Y}v^\star-i^\star_\tx{g}&=0_2,
		\label{eqs:sys_ss_v}
		\\
		v^\star-\m{Z}_\tx{g} i^\star_\tx{g}-v_\tx{b}&=0_2.
		\label{eqs:sys_ss_ig}    
		\end{align} 
	\end{subequations}
	If the condition of the theorem is met and thus $v^\star_\tx{dc}=v_\tx{dc,r}$, \eqref{eqs:sys_ss_idc} implies that $i^\star_\tx{dc}=i_\tx{r}$ and \eqref{eqs:sys_ss_theta} reduces to $\gamma \sin\big({(\theta^\star-\theta_\tx{r})}/{2}\big)=0$. Hence the angle equilibria are
	\begin{equation}\label{eq:theta_ss}
	\theta^\star_1=\theta_\tx{r}\quad \text{and}\quad \theta^\star_2=\theta_\tx{r}+2\pi.
	\end{equation}
	It remains to show \eqref{eqs:sys_ss_i}-\eqref{eqs:sys_ss_ig} admits a unique solution. Rearrange \eqref{eqs:sys_ss_i}-\eqref{eqs:sys_ss_ig} to $\m{A}(i^\star,v^\star,i_\tx{g}^\star)=b$ with
	\begin{align*}
	\m{A} \coloneqq \begin{pmatrix}
	-\m{Z} & -\m{I} & 0_{2\times 2} \\
	\m{I} & -\m{Y} & -\m{I}\\
	0_{2\times 2} &\m{I} & -\m{Z}_\text{g}
	\end{pmatrix}
	\end{align*}
	and $b \coloneqq \big(-v^\star_\tx{dc}m(\theta^\star),0_2,v_\tx{b}\big)$. It can be easily computed that symmetric part of $\m{A}$, that is, $\left({1}/{2}\right)\left(\m{A}+\m{A}^\top\right)\prec 0$.
	%=-{\mathrm{blkdiag}}\left(g_\tx{dc},r\m{I},g\m{I},r_\tx{g}\m{I}\right) 
	Hence, $\m{A}^{-1}$ exists and $(i^\star,v^\star,i_\tx{g}^\star) \coloneqq \m{A}^{-1}b$ is unique. 
\end{proof}
%-----------------------------------------------------------
\begin{remark}{\textup{(Equilibria and the existence condition)}}\\
The following comments are in order: 

First, Theorem \ref{TH1} identifies two equilibria in \eqref{eq:EqSet} conditioned on a consistent reference $i_\tx{r}$ inducing $v^\star=v_\tx{dc,r}$. This condition can be enforced through appropriate feedforward control or feedback proportional-integral regulation, and it can be entirely omitted if $\eta = 0$ in \eqref{eq:omega_c}. 

Second, the condition is actually not restrictive. In fact, in the next section we will prove that the closed-loop system \eqref{eqs:sys} is \ac{agas} \ac{wrt}  $x^\star_\tx{s}$ under a mild parametric condition. Thus, if $x^\star_\tx{s}$ exists then no other relevant (i.e., stable) equilibria co-exist. 

Third, in case the set-points are inconsistent or if the system is subject to exogenous disturbances, then the closed-loop \eqref{eqs:sys} will undergo a so-called droop behavior similar to other grid-forming controls (e.g., \cite[Prop. 5]{AJD18}\cite[Prop. 4]{CGBF19}), i.e., a power imbalance will result in a frequency deviation; see Section~\ref{subsec:droop} for details.

Last, next to $x^\star_\tx{s}$ being \ac{agas} on $\mathbb{M} \times \mathbb{R}^8$, we will establish instability of $x^\star_\tx{u}$. In fact, when viewing the angle state not as element of $\mathbb{M}$ but more conventionally evolving on $\mathbb{S}^1$ (i.e., picture projecting Figure \ref{fig:Mobius} downwards to a circle), then the two equilibria $\{x^\star_\tx{s},x^\star_\tx{u}\}$ represent an identical point on $\mathbb{S}^1$. Thus, by working on $\mathbb{M}$ rather than $\mathbb{S}^{1}$ we by-passed the topological obstruction to continuous stabilization on $\mathbb{S}^1$ \cite{bhat2000topological}.%
\end{remark}
%%-----------------------------------------------------
\subsection{Stability Analysis}
In the sequel, we establish the \ac{agas} of the closed-loop system \eqref{eqs:sys} \ac{wrt}  the equilibrium $x^\star_\tx{s}$ characterized in Theorem \ref{TH1}. This finding relies on the basis of intermediate results, namely, 1) global convergence of the trajectories to $\Omega^\star$ in \eqref{eq:EqSet}, 2) local asymptotic stability of $x^\star_\tx{s}$, and 3) instability of $x^\star_\tx{u}$. We begin by restating the definition of \ac{agas}  \cite[Def. 5]{CGBF19}. 
%--------------------------------------------------------------------------------------
\begin{definition}\textup {(\ac{agas})}	\label{def:agas}	\\
An equilibrium of a dynamical system is almost globally asymptotically stable if it is asymptotically stable and for all initial conditions, except those contained in a Lebesgue zero-measure set, the trajectories converge to that equilibrium.
\end{definition}
%%--------------------------------------------------------------------------------------
Theorem \ref{TH2} below demonstrates the global attractivity of the equilibria \eqref{eq:EqSet} under a mild parametric condition. We want to highlight the innovative bounding scheme of the trigonometric error term $\psi(\theta)-\psi(\theta^\star)$ in the proof of Theorem \ref{TH2}, which is novel to our knowledge, and results in less restrictive parametric condition relative to the literature; see Remark~\ref{rem:feasibility}.
%%--------------------------------------------------------------------------------------
\begin{theorem}\textup{{(Global attractivity)}}	\label{TH2}	\\
Consider the closed-loop system \eqref{eqs:sys} and the equilibria $\Omega^\star$ characterized in Theorem \ref{TH1}. If the system parameters satisfy
	\begin{equation}	\label{eq:stability condition}
	\frac{ \eta }{ g_\tx{dc} }  + \frac{ \eta \left( \mu_\tx{r} \n{ i^{\star} } \right )^{ 2 } } { g_\tx{dc} } + \frac{ \eta ( \mu_\tx{r} v_\tx{dc}^{\star} ) ^ {2} }{ r } < \gamma, 
	\end{equation}
	then all trajectories of \eqref{eqs:sys} globally converge to $\Omega^\star$.
\end{theorem}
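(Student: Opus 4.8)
Proof plan. The plan is a LaSalle argument built on the converter's physical energy, augmented by an angle term chosen \emph{complementary} to the \ac{hac} feedback. Working in incremental coordinates relative to the equilibrium $y^\star$ of Theorem~\ref{TH1}, I would take
\begin{align*}
V &= \tfrac{1}{2}c_\tx{dc}(v_\tx{dc}-v_\tx{dc}^\star)^{2} + \tfrac{\tau_\tx{dc}}{2\kappa}(i_\tx{dc}-i_\tx{dc}^\star)^{2} + \tfrac{\ell}{2}\n{i-i^\star}^{2}\\
&\quad + \tfrac{c}{2}\n{v-v^\star}^{2} + \tfrac{\ell_\tx{g}}{2}\n{i_\tx{g}-i_\tx{g}^\star}^{2} + \tfrac{2}{\eta}\Big(1-\cos\tfrac{\theta-\theta_\tx{r}}{2}\Big)
\end{align*}
(taking $\eta>0$; for $\eta=0$ the angle channel decouples and the argument simplifies, and the stiff-source limit $\tau_\tx{dc}\to0$, $\kappa\to\infty$ is obtained by dropping the $(i_\tx{dc}-i_\tx{dc}^\star)^{2}$ term). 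This $V$ is nonnegative, vanishes only at $x_\tx{s}^\star$, is radially unbounded in the $\mathbb{R}^{8}$ directions, and---with $\mathbb{M}$ compact---has compact sublevel sets; once $\dot V\le0$ is established these are forward invariant, so all solutions of \eqref{eqs:sys} are complete and bounded. The capacitor/inductor coefficients are the physical storages; the weight $\tau_\tx{dc}/(2\kappa)$ is picked so that the $(i_\tx{dc}-i_\tx{dc}^\star)$--$(v_\tx{dc}-v_\tx{dc}^\star)$ cross terms of \eqref{eqs:sys1+}--\eqref{eqs:sys2} cancel, and the $2/\eta$ weight on the angle term is the tuning choice that makes the $\eta$-factors of \eqref{eq:stability condition} appear.

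Differentiating $V$ along \eqref{eqs:sys} and using $v_\tx{dc,r}=v_\tx{dc}^\star$, $i_\tx{r}=i_\tx{dc}^\star$, the steady-state relations \eqref{eqs:sys_ss}, the skew-symmetry of $\m{J}$ (so $\m{Z},\m{Y},\m{Z}_\tx{g}$ contribute only their symmetric parts $r\m{I}$, $g\m{I}$, $r_\tx{g}\m{I}$), and the power-preserving identity $v_\tx{dc}(m^{\top}i)=i^{\top}(v_\tx{dc}m)$, the bilinear modulation terms of \eqref{eqs:sys2}--\eqref{eqs:sys3} telescope against the passive LC-filter and line interconnection, leaving
\begin{align*}
\dot V &= -g_\tx{dc}(v_\tx{dc}-v_\tx{dc}^\star)^{2} - \tfrac{1}{\kappa}(i_\tx{dc}-i_\tx{dc}^\star)^{2} - r\n{i-i^\star}^{2}\\
&\quad - g\n{v-v^\star}^{2} - r_\tx{g}\n{i_\tx{g}-i_\tx{g}^\star}^{2} - \tfrac{\gamma}{\eta}\sin^{2}\!\tfrac{\theta-\theta_\tx{r}}{2} + \Xi,
\end{align*}
where the \emph{only} sign-indefinite contribution is the trigonometric-error coupling
\begin{align*}
\Xi &= \mu_\tx{r}\big(v_\tx{dc}^\star\, i - v_\tx{dc}\, i^\star\big)^{\top}\big(\psi(\theta)-\psi(\theta_\tx{r})\big)\\
&\quad + (v_\tx{dc}-v_\tx{dc}^\star)\sin\tfrac{\theta-\theta_\tx{r}}{2}.
\end{align*}
Thus the angle couples to the electrical states only through $\psi(\theta)-\psi(\theta_\tx{r})$ and through the \ac{hac} feedback $\sin\tfrac{\theta-\theta_\tx{r}}{2}$---exactly the obstruction caused by the absence of damping in the angle channel, which the complementary choice of $V$ is designed to absorb.

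The crux---and where I expect the main effort---is to bound $\Xi$ sharply enough that the resulting condition restricts only the control gains rather than the physical damping. Using the exact identity $\psi(\theta)-\psi(\theta_\tx{r})=2\sin\tfrac{\theta-\theta_\tx{r}}{2}\,u(\theta)$ with $\n{u(\theta)}=1$, every term of $\Xi$ carries a factor $\sin\tfrac{\theta-\theta_\tx{r}}{2}$, so each can be paired, via Young's inequality, against an electrical dissipation term---the $v_\tx{dc}^\star i$ part against $-r\n{i-i^\star}^{2}$, and the $v_\tx{dc}\, i^\star$ and $(v_\tx{dc}-v_\tx{dc}^\star)\sin$ parts against $-g_\tx{dc}(v_\tx{dc}-v_\tx{dc}^\star)^{2}$---at the price of a positive multiple of $\sin^{2}\tfrac{\theta-\theta_\tx{r}}{2}$. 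Requiring the sum of these multiples to stay below the coefficient $\gamma/\eta$ of the $-\sin^{2}\tfrac{\theta-\theta_\tx{r}}{2}$ term supplied by the angle part of $V$ leads to the parametric condition \eqref{eq:stability condition}, its three summands corresponding (in order) to the $(v_\tx{dc}-v_\tx{dc}^\star)\sin$ term, the $v_\tx{dc}\, i^\star$ term, and the $v_\tx{dc}^\star i$ term. The delicate point is to carry out this estimate tightly---splitting the available damping $g_\tx{dc}$, $r$ in just the right way so as \emph{not} to demand large physical dissipation; this is the sharp bounding scheme flagged before the statement, and the step I would have to execute with care. Under \eqref{eq:stability condition}, $\dot V$ is then dominated by a negative-definite quadratic form in $\big((i_\tx{dc}-i_\tx{dc}^\star),(v-v^\star),(i_\tx{g}-i_\tx{g}^\star),\sin\tfrac{\theta-\theta_\tx{r}}{2}\big)$, so $\dot V\le0$ on all of $\mathbb{X}$.

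Finally, since $V$ has compact forward-invariant sublevel sets, LaSalle's invariance principle yields convergence of every solution to the largest invariant subset of $\{\dot V=0\}$. On $\{\dot V=0\}$ one has $i_\tx{dc}=i_\tx{dc}^\star$, $v=v^\star$, $i_\tx{g}=i_\tx{g}^\star$ and $\sin\tfrac{\theta-\theta_\tx{r}}{2}=0$, i.e., $\theta\in\{\theta_\tx{r},\theta_\tx{r}+2\pi\}$ on $\mathbb{M}$ and $\psi(\theta)=\psi(\theta_\tx{r})$; propagating invariance, $\dot v=0$ in \eqref{eqs:sys4} forces $i=i^\star$ and $\dot\theta=0$ in \eqref{eqs:sys1} forces $v_\tx{dc}=v_\tx{dc}^\star$, and the remaining equations of \eqref{eqs:sys} are then automatically satisfied. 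Hence the largest invariant subset of $\{\dot V=0\}$ is exactly $\Omega^\star=\{x_\tx{s}^\star,x_\tx{u}^\star\}$ from \eqref{eq:EqSet}, so all trajectories of \eqref{eqs:sys} globally converge to $\Omega^\star$, as claimed. (That convergence is to $x_\tx{s}^\star$ for all but a zero-measure set of initial conditions is a separate matter, settled by the local stability of $x_\tx{s}^\star$ and the instability of $x_\tx{u}^\star$ established next.)
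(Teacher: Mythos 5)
Your plan is correct and follows essentially the same route as the paper's proof: the same composite LaSalle function (quadratic storage in the Euclidean errors plus a $1-\cos\big(\tilde{\theta}/2\big)$ angle potential), the same identification of the sign-indefinite couplings through $e_\psi=\psi(\theta)-\psi(\theta_\tx{r})$ and $\tilde{v}_\tx{dc}\sin\big(\tilde{\theta}/2\big)$, the same key identity $\n{e_\psi}=2\big|\sin\big(\tilde{\theta}/2\big)\big|$ followed by Young/Schur bounding against the $g_\tx{dc}$ and $r$ dissipation, and the same invariance-principle endgame. The only calibration to watch when you execute the bounding is the angle weight: with your $\tfrac{2}{\eta}\big(1-\cos\tfrac{\tilde{\theta}}{2}\big)$ and the symmetric half-splitting of $g_\tx{dc}$ and $r$ you land on $\tfrac{\eta}{2g_\tx{dc}}+\tfrac{2\eta(\mu_\tx{r}\n{i^\star})^2}{g_\tx{dc}}+\tfrac{2\eta(\mu_\tx{r}v_\tx{dc}^\star)^2}{r}<\gamma$ rather than \eqref{eq:stability condition}; doubling the weight (the paper's $2\lambda$ with $\lambda=2/\eta$) recovers \eqref{eq:stability condition} exactly.
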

%--------------------------------------------------------------------------------------
\begin{proof} 
	Define the error coordinates { $ \tilde{x} = \big( \tilde{\theta} , \tilde{i}_\tx{dc} , \tilde{v}_\tx{dc} , \tilde{i} , \tilde{v} , \tilde{i}_\tx{g} \big) $} \ac{wrt}  $ x^\star_\tx{s}$ -- with the equilibrium angle $\theta^\star_1$ -- in \eqref{eq:EqSet} as
	\begin{equation}	\label{eqs:ErrCoord}
	\tilde{x}  \coloneqq  
	( \theta - \theta^\star_1 , i_\tx{dc} - i_\tx{dc}^\star , v_\tx{dc} - v_\tx{dc}^\star , i - i^\star , v-v^\star ,i_\tx{g} - i_\tx{g}^\star ).
	\end{equation}
	The error dynamics associated with \eqref{eqs:sys} are described by
	\begin{subequations}	\label{eqs:ErrSys}
		\begin{align}
		\dot{ \tilde{\theta} } & = \eta \tilde{v}_\tx{dc} - \gamma \sin\big(\tilde{\theta}/2\big),
		\label{eqs:ErrSys1}
		\\
		\tau_\tx{dc}\dot{\tilde{i}}_\tx{dc} & = -\kappa\tilde{v}_\tx{dc} - \tilde{i}_\tx{dc},
		\label{eqs:ErrSys1+}
		\\
		c_\tx{dc} \dot{ \tilde{v} }_\tx{dc} & = \tilde{i}_\tx{dc} - g_\tx{dc} \tilde{v}_\tx{dc} - \mu_\tx{r} e_{\psi}^\top i^\star - m(\theta)^\top \tilde{i},
		\\
		\ell \dot{\tilde{i}} & = m(\theta) \tilde{v}_\tx{dc} +\mu_\tx{r} e_{\psi}v^\star_\tx{dc} - \m{Z} \tilde{i} - \tilde{v},
		\\
		c \dot{ \tilde{v} } & = \tilde{i} - \m{Y} \tilde{v} - \tilde{i}_\tx{g},
		\\
		\ell_\tx{g} \dot{\tilde{i}} & = \tilde{v} - \m{Z}_\tx{g} \tilde{i}_\tx{g},
		\end{align}
	\end{subequations}
	where $ e_{\psi}  \coloneqq  \psi( \theta ) - \psi( \theta^\star_\tx{1} ) $. Let { $\tilde{y}  \coloneqq  ( \tilde{i}_\tx{dc} , \tilde{v}_\tx{dc} , \tilde{i} , \tilde{v} , \tilde{i}_\tx{g} )$} and consider the composite parametric LaSalle/Lyapunov function
	\begin{equation}	\label{eq:LF}
	\mc{V}( \tilde{x} )  
	 \coloneqq  
	\mc{H}( \tilde{y}) + \lambda \mc{S}( \tilde{\theta} ) =
	\dfrac{1}{2}\big(\tilde{y}^\top \m{P} \tilde{y}\big) + 2 \lambda\left( 1 - \cos \dfrac {\tilde{\theta}}{2} \right)
	\end{equation}%
	where { $\m{P}  \coloneqq  {\mathrm{blkdiag} } \left( \tau_\tx{dc}/\kappa , c_\tx{dc} , \ell \m{I} , c\m{I} , \ell_\tx{g} \m{I} \right)$}, $\lambda \in \mathbb{R}_{>0}$, and $\mathcal{V}(\tilde{x})>0$ for all $\tilde{x}\neq 0_9$. See Figure \ref{fig:LF} for an illustration of $\mc{V}(\tilde{x})$. Evaluating $\dot{ \mc{V} }( \tilde{x} )$ along trajectories of \eqref{eqs:ErrSys} yields  
	\begin{align}
	\dot{ \mc{V} } ( \tilde{x} ) = 
	&  - g_\tx{dc} \tilde{v}^2_\tx{dc} - \dfrac{1}{\kappa}\tilde{i}^2_\tx{dc} - r \n{ \tilde{i} }^2 - g \n{ \tilde{v} }^2 - r_\tx{g} \n{ \tilde{i}_\tx{g} }^2
	\nonumber
	\\
	& +\lambda  \eta \tilde{v}_\tx{dc}\sin\big(\tilde{\theta}/2\big) -\lambda  \gamma \sin^2\big(\tilde{\theta}/2\big) 
	\nonumber
	\\
	& - \mu_\tx{r} \tilde{v}_\tx{dc} e_{\psi}^\top i^\star + \mu_\tx{r} v^\star_\tx{dc} \tilde{i}^\top e_{\psi} ,
	\label{eq:dotV}
	\end{align}
	where we exploited the skew symmetry of $ \m{J} $ in $\m{Z}$, $\m{Y}$, and $\m{Z}_\tx{g}$ i.e., $\tilde{{i}}^\top ( \ell \omega_0 \m{J} ) \tilde{i} = \tilde{v}^\top ( c \omega_0 \m{J} ) \tilde{v} = \tilde{i}_\tx{g}^\top ( \ell_\tx{g} \omega_0 \m{J} ) \tilde{i}_\tx{g} = 0. $ We apply the identity \eqref{id:binomial} to the terms in \eqref{eq:dotV} that depend on $e_{\psi}$:%
	\begin{subequations}	\label{eqs:BoundedTerms}
		\begin{align}
		- \mu_\tx{r} \tilde{v}_\tx{dc} e_{\psi}^\top i^\star & \leq \left( {\epsilon_1 \mu_\tx{r} \n{i^\star}}\right)^2{\tilde{v}_\tx{dc}}^2+\dfrac{1}{4\epsilon_1^2}\n{e_{\psi}}^2,\\
		\mu_\tx{r}{v}^\star_\tx{dc} \tilde{i}^\top e_{\psi} &\leq\epsilon_2^2\n{\tilde{i}}^2+\dfrac{(\mu_\tx{r} v_\tx{dc}^\star)^2}{4\epsilon_2^2}\n{e_{\psi}}^2.
		\end{align}
	\end{subequations}
	with $\epsilon_1,\epsilon_2\in\mathbb{R}_{>0}$. Next, by applying identities \eqref{id: sine half angle} and \eqref{id: cosine angle sum}, $\n{e_{\psi}}^2$ is expressed in terms of $\sin\big(\tilde{\theta}/2\big)$:
	\begin{align*}
	\n{e_{\psi}}^2%&=\n{\psi(\theta)-\psi(\theta^\star_\tx{1})}^2\\
	&=\big(\cos(\theta)-\cos\left(\theta^\star_\tx{1}\right)\big)^2+\big(\sin(\theta)-\sin\left(\theta^\star_\tx{1}\right)\big)^2\\	
	%&=\cos^2(\theta)+\cos^2(\theta^\star_\tx{1})-2\cos(\theta)\cos(\theta^\star_\tx{1})\\
	%&~~~+\sin^2(\theta)+\sin^2(\theta^\star_\tx{1})-2\sin(\theta)\sin(\theta^\star_\tx{1})\\
	&=2\big(1-\cos(\theta)\cos(\theta^\star_\tx{1})-\sin(\theta)\sin(\theta^\star_\tx{1})\big)\\
	&=2\big(1-\cos(\theta-\theta^\star_\tx{1})\big)=2\big(1-\cos\big(\tilde{\theta}\big)\big)=4\sin^2\big(\tilde{\theta}/2\big).
	\end{align*}
	Replace $\n{e_{\psi}}^2$ by $4\sin^2\big(\tilde{\theta}/2\big)$ in \eqref{eqs:BoundedTerms}, then $\dot{\mc{V}}(\tilde{x})$ in \eqref{eq:dotV} is upper-bounded by
	\begin{align}
	\dot{\mc{V}}(\tilde{x})\leq&~\lambda\eta\tilde{v}_\tx{dc}\sin\big(\tilde{\theta}/2\big)-\left(\lambda\gamma-\dfrac{1}{\epsilon_1^2}-\dfrac{(\mu_\tx{r} v_\tx{dc}^\star)^2}{\epsilon_2^2}\right) \sin^2\big(\tilde{\theta}/2\big)
	\nonumber
	\\
	&-\left(g_\tx{dc}-\left({\epsilon_1\mu_\tx{r} \n{i^\star}}\right)^2\right)\tilde{v}^2_\tx{dc}-\left(r-\epsilon_2^2\right)\n{\tilde{i}}^2
	\nonumber
	\\
	&-\dfrac{1}{\kappa}\tilde{i}^2_\tx{dc} - g\n{\tilde{v}}^2-r_\tx{g}\n{\tilde{i}_\tx{g}}^2=-\tilde{\zeta}^\top\m{Q}\tilde{\zeta},
	\label{eq:Vdot bound}
	\end{align}
	where $\tilde{\zeta} \coloneqq \big(\sin\big(\tilde{\theta}/2\big),\tilde{y}\big)$ and $\m{Q}   \coloneqq   
	\left(\begin{smallmatrix}	\m{Q}_{11} & 0_{3\times6}\\ 0_{6\times3} & \m{Q}_{22}	\end{smallmatrix}\right)$ with $\m{Q}_{11} \coloneqq $
	\begin{equation}\label{eq:N_11}
	\begin{pmatrix}
	\gamma\lambda-\dfrac{1}{\epsilon_1^2}-\dfrac{(\mu_\tx{r} v_\tx{dc}^\star)^2}{\epsilon_2^2}&0&-\dfrac{\eta\lambda}{2}\\ 
	0&\dfrac{1}{\kappa}&0\\
	-\dfrac{\eta\lambda}{2}&0&g_\tx{dc}-\left({\epsilon_1\mu_\tx{r} \n{i^\star}}\right)^2
	\end{pmatrix}
	\end{equation}
	and $\m{Q}_{22}   \coloneqq   \mathrm{blkdiag}\left(\left(r-\epsilon_2^2\right)\m{I},g\m{I},r_\tx{g}\m{I}\right)$. By standard Schur complement analysis, $\m{Q}\succ0$ iff
	\begin{subequations}
		\label{eq: Schur conditions}%
		\begin{align}
		\epsilon_{1}^{2} &< \dfrac{g_\tx{dc}}{(\mu_\tx{r} \|i^{\star}\|)^2} \coloneqq \alpha\quad\tx{and}\quad\epsilon_{2}^{2} < r,
		\label{eq: condition g_dc & r}
		\\
		\left(\dfrac{\lambda\eta}{2\sqrt{g_\tx{dc}}}\right)^2 &<\left(\gamma\lambda - \frac{1}{\epsilon_{1}^{2}} - \dfrac{(\mu_\tx{r} v_\tx{dc}^\star)^2}{\epsilon_2^2} \right)\left(1 - \dfrac{\epsilon_{1}^2}{\alpha}\right).
		\label{eq: condition gamma}%
		\end{align}%
	\end{subequations}%
These bounds can be optimized over the parameters $\epsilon_1,\epsilon_2$, and $\lambda$ to obtain the least conservative or most compact condition. 
	
To continue assume for now that $\eta>0$. The simple and favorable choice $\epsilon_1=\sqrt{\alpha/{2}}$, $\epsilon_{2}=\sqrt{{r}/{2}}$, and $\lambda=2/\eta$ yields that conditions \eqref{eq: Schur conditions} are satisfied and $\m{Q}\succ0$ iff the bound \eqref{eq:stability condition} is met. Accordingly, $\dot{\mc{V}}(\tilde{{x}})<0$ for all $\tilde{\zeta}\neq 0_9$. For $\eta = 0$, the off-diagonal elements of $\m{Q}_{11}$ in \eqref{eq:N_11} vanish. With the same choice of $\epsilon_1=\sqrt{\alpha/{2}}$ and $\epsilon_{2}=\sqrt{{r}/{2}}$, condition \eqref{eq: condition gamma} reduces to
	\begin{equation}\label{eq:stability condition eta=0}
	\frac{2\left(\mu_\tx{r} \n{i^{\star}}\right)^{2}}{\lambda g_\tx{dc}}  + \dfrac{2(\mu_\tx{r} v_\tx{dc}^\star)^2}{\lambda r} < \gamma\,.
	\end{equation}
	For any $\gamma>0$, \eqref{eq:stability condition eta=0} is met by a sufficiently large $\lambda>0$, which is consistent with condition \eqref{eq:stability condition} for $\eta=0$.
	
	Recall the  boundedness of $\tilde{\theta}$ in $\mathbb{M}$ and radial unboundedness of $\mc{H}(\tilde{{y}})$. Since $\dot{\mc{V}}(\tilde{{x}})\leq0$, for any $\tilde{x}(0)\in\mathbb{X}$, the set $\mathscr{L}_{\mc{V}(\tilde{x}(0))}=\{\tilde{x}\in\mathbb{X}:\mc{V}(\tilde{x})\leq \mc{V}\big(\tilde{x}(0)\big)\}$ is compact and forward invariant. Thus, by LaSalle's invariance principle \cite[Th. 4.4]{khalil_nonlinear_2002}, all trajectories of \eqref{eqs:ErrSys} converge to the largest invariant set in $\Omega=\{\tilde{{x}}\in\mathbb{X}:\dot{\mc{V}}(\tilde{{x}})=0\}$. 
	Since $\m{Q}\succ 0$, $\dot{\mc{V}}(\tilde{{x}})=0$ iff $\tilde{\zeta}=0_9$ which holds iff $\sin\big(\tilde{\theta}/2\big)=0$ and $\tilde{y}=0_8$ that means either $\tilde{\theta}=0$ or $\tilde{ \theta }=2\pi$ in $\Omega$ proving that $\Omega=\Omega^\star$.  
\end{proof}
\begin{remark}\textup{{(Feasibility, interpretation, and significance)}}\label{rem:feasibility}\\
Following comments are in order:	

First, condition \eqref{eq:stability condition} is met for sufficiently large $\gamma>0$ and it is possible to arbitrarily scale the \ac{lhs} terms via $\eta \geq 0$. Condition \eqref{eq:stability condition} implies that for small $g_\tx{dc}$ and $r$, a large ratio $\gamma/\eta$ is required. Also, for high $\n{i^\star}$, $v_\tx{dc}^\star$, and $\mu_\tx{r}$, the ratio $\gamma/\eta$ must be increased. Finally, for pure angle feedback, i.e., $\eta=0$ and $\gamma>0$ in  \eqref{eq:omega_c}, condition \eqref{eq:stability condition} is \underline{always} met  \underline{regardless} of the system equilibria and parameters.
	 
Second, condition \eqref{eq:stability condition} is significant because it does not demand a minimum physical damping unlike the conditions in \cite{AJD18,BSEO17,CT14,AF20}, e.g., see the $1/r$ proportionality in \eqref{eq:stability condition} or \cite[Th. 3]{AJD18}. In practice, this is met by considering a virtual impedance \cite{PD15} and / or a high-gain current control \cite{SGCD19}. In contrast, \ac{hac} only needs large enough ratio of gains  $\gamma/\eta$ to ensure stability. An interpretation is that with large enough angle damping, the inherent converter passivity is sufficient to stabilize the Euclidean states \cite{AJD18}. 
	
Third, condition \eqref{eq:stability condition} does not depend on the dc source time constant
$\tau_\tx{dc}$ and control gain $\kappa$. Thus, \eqref{eq:stability condition} also unveils the robustness of \ac{hac} \ac{wrt} delays in source actuation that suggests the compatibility of \ac{hac} with different energy sources on distinct timescales. Furthermore, $\kappa$ can still be freely chosen to optimize the dc voltage performance.	

Last, although \ac{hac} dominantly relies on ac dynamics (by recommendation for large $\gamma/\eta$), stabilization does not require the conventional timescale separation of angle, dc, and ac dynamics \cite{baros2020stability}.    	
\end{remark}	
%----------------------------------------------------------------------
Proposition \ref{prop:LAS} below reveals the local asymptotic stability of the equilibrium $x^\star_\tx{s}$ in \eqref{eq:EqSet} under the condition \eqref{eq:stability condition}. 
%--------------------------------------------------------------------------------------
\begin{proposition}\textup{{(Local asymptotic stability)}}\label{prop:LAS}\\
	Consider the closed-loop system \eqref{eqs:sys} and assume that condition \eqref{eq:stability condition} holds. Then the equilibrium $x^\star_\tx{s}$ in \eqref{eq:EqSet} is locally asymptotically stable.   
\end{proposition}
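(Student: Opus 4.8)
The plan is to reuse the Lyapunov/LaSalle function $\mc{V}$ constructed in the proof of Theorem~\ref{TH2} and invoke Lyapunov's direct method locally. First I would note that, in the error coordinates $\tilde{x}$ centered at $x^\star_\tx{s}$ (i.e.\ with $\theta^\star_1$ as the equilibrium angle), the function $\mc{V}(\tilde{x}) = \mc{H}(\tilde{y}) + 2\lambda\big(1 - \cos(\tilde{\theta}/2)\big)$ is a valid \emph{local} Lyapunov candidate at the origin: $\mc{V}(0_9)=0$, while $\m{P}\succ 0$ makes $\mc{H}(\tilde{y})=\tfrac12 \tilde{y}^\top \m{P}\tilde{y}$ positive definite in $\tilde{y}$, and $1-\cos(\tilde{\theta}/2)>0$ for $\tilde{\theta}\in(-2\pi,2\pi)\setminus\{0\}$ with a nondegenerate minimum at $\tilde{\theta}=0$. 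Hence $\mc{V}$ has a strict local minimum at $x^\star_\tx{s}$ (though, crucially, not a global one, since $\mc{V}(x^\star_\tx{u}) = 2\lambda\big(1-\cos\pi\big) = 4\lambda$).

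Second, I would recall the estimate \eqref{eq:Vdot bound} established in the proof of Theorem~\ref{TH2}: under condition \eqref{eq:stability condition} and the stated choice of $\epsilon_1,\epsilon_2,\lambda$, one has $\dot{\mc{V}}(\tilde{x}) = -\tilde{\zeta}^\top \m{Q}\tilde{\zeta}$ with $\m{Q}\succ 0$ and $\tilde{\zeta}=\big(\sin(\tilde{\theta}/2),\tilde{y}\big)$. Therefore $\dot{\mc{V}}\leq 0$ everywhere and $\dot{\mc{V}}=0$ only when $\sin(\tilde{\theta}/2)=0$ and $\tilde{y}=0_8$. Restricting to a ball around the origin small enough to exclude $\tilde{\theta}=\pm 2\pi$, the unique point where $\dot{\mc{V}}$ vanishes is $\tilde{x}=0_9$, so $\dot{\mc{V}}$ is negative definite there; Lyapunov's theorem \cite[Th.~4.1]{khalil_nonlinear_2002} then yields local asymptotic stability of $x^\star_\tx{s}$.

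Alternatively, and as a byproduct giving an explicit region-of-attraction estimate, I would run LaSalle's invariance principle on the sublevel set $\mathscr{L}_c = \{\tilde{x}\in\mathbb{X}:\mc{V}(\tilde{x})\leq c\}$ for any $c<4\lambda$. Because $\mc{V}(x^\star_\tx{u})=4\lambda$, such a set is a connected, compact, forward-invariant neighborhood of $x^\star_\tx{s}$ excluding $x^\star_\tx{u}$; on it the largest invariant set contained in $\{\dot{\mc{V}}=0\}$ is exactly $\{x^\star_\tx{s}\}$ (the argument is identical to the end of the proof of Theorem~\ref{TH2}, except that $\tilde{\theta}=2\pi$ is now ruled out by $c<4\lambda$), so every trajectory from $\mathscr{L}_c$ converges to $x^\star_\tx{s}$.

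I do not anticipate a real obstacle. The only subtlety worth flagging is that $\dot{\mc{V}}$ is merely negative \emph{semi}definite globally — it also vanishes at $x^\star_\tx{u}$ — so one must localize (or pass to a sublevel set strictly below the level $4\lambda$) before concluding definiteness, and one must observe that $\mc{V}$ has a strict \emph{local} minimum at $x^\star_\tx{s}$. Both facts are immediate from the structure of $\mc{V}$ above, and the parametric hypothesis \eqref{eq:stability condition} enters only through $\m{Q}\succ 0$, exactly as in Theorem~\ref{TH2}.
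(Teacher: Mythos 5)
Your proof is correct and follows essentially the same route as the paper: reuse the LaSalle/Lyapunov function $\mc{V}$ from Theorem~\ref{TH2}, observe that under \eqref{eq:stability condition} its derivative is negative definite once one localizes away from $\tilde{\theta}=\pm 2\pi$, and conclude via Lyapunov's direct method on a small forward-invariant sublevel set. Your explicit choice $c<4\lambda=\mc{V}(x^\star_\tx{u}-x^\star_\tx{s})$ sharpens the paper's ``sufficiently small $c$'' into a concrete region-of-attraction estimate, which is a nice (but inessential) refinement; the only cosmetic slip is writing $\dot{\mc{V}}=-\tilde{\zeta}^\top\m{Q}\tilde{\zeta}$ where the paper establishes the inequality $\dot{\mc{V}}\leq-\tilde{\zeta}^\top\m{Q}\tilde{\zeta}$, which does not affect the argument.
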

%--------------------------------------------------------------------------------------
\begin{proof}
	Consider the error dynamics \eqref{eqs:ErrSys} and the Lyapunov function \eqref{eq:LF} that satisfies $\mc{V}(0_9)=0$ and $\mc{V}(\tilde{x})>0$ for all $\tilde{x}\neq 0_9$. Furthermore, if  \eqref{eq:stability condition} holds, $\dot{\mc{V}}(\tilde{{x}})<0$ for all $\tilde{{x}}\neq0_9$ in a sufficiently small neighborhood of the origin. Consider a $c$-sublevel set of $\mc{V}(\tilde{x})$ i.e., $\mathscr{L}_{c} \coloneqq \left\{\tilde{x}\in\mathbb{X}:\mc{V}(\tilde{x})\leq c, c\in\mathbb{R}_{>0}\right\}$, which is forward invariant under the flow  \eqref{eqs:sys} since $\dot{\mc{V}}(\tilde{x})\leq 0$. Take $c$ to be sufficiently small such that the origin is the only equilibrium in $\mathscr{L}_{c}$ (recall that the equilibria in \eqref{eq:EqSet} are disjoint). Thus, by Lyapunov's direct method \cite[Th. 3.1]{khalil_nonlinear_2002} the origin is a locally asymptotically stable equilibrium of \eqref{eqs:ErrSys}.      
\end{proof}
%--------------------------------------------------------------------------------------
Proposition \ref{prop: instability} reveals instability of $x^\star_\tx{u}$ in \eqref{eq:EqSet} and characterizes its region of attraction as a Lebesgue zero-measure set.
%--------------------------------------------------------------------------------------

\begin{proposition}\textup{{(Unstable equilibrium point)}}\label{prop: instability}\\
	Consider the closed-loop system \eqref{eqs:sys} and assume that condition \eqref{eq:stability condition} holds. The equilibrium $x^\star_\tx{u}$ in \eqref{eq:EqSet} is unstable and its region of attraction has zero Lebesgue measure. 
\end{proposition}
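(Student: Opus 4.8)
The plan is to prove instability of $x^\star_\tx{u}$ via linearization (Lyapunov's indirect method / Chetaev-type argument) and then use the stable-manifold theorem to bound its region of attraction. First I would linearize the error dynamics \eqref{eqs:ErrSys} about $x^\star_\tx{u}$, i.e., about $\tilde{\theta} = 2\pi$ (equivalently work in coordinates $\bar{\theta} \coloneqq \theta - \theta^\star_2$). The key observation is that the only difference between the Jacobians at $x^\star_\tx{s}$ and $x^\star_\tx{u}$ lies in the angle equation: linearizing $-\gamma\sin(\tilde{\theta}/2)$ at $\tilde{\theta}=0$ gives the stabilizing term $-(\gamma/2)\tilde{\theta}$, whereas linearizing at $\tilde{\theta}=2\pi$ gives $+(\gamma/2)\bar{\theta}$, since $\cos(\pi) = -1$. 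Likewise, the coupling term $\mu_\tx{r} e_\psi$ linearizes to $\mu_\tx{r}(\partial_\theta \psi)(\theta^\star)\,\bar\theta$ and $\partial_\theta\psi(\theta^\star_2) = -\partial_\theta\psi(\theta^\star_1)$, but this sign flip can be absorbed by the coordinate change on the Euclidean states, so it does not affect the spectrum. Thus the Jacobian at $x^\star_\tx{u}$ equals that at $x^\star_\tx{s}$ except that the $(1,1)$ entry changes sign from $-\gamma/2$ to $+\gamma/2$.

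The second step is to show this sign flip produces at least one eigenvalue with positive real part. I would reuse the Lyapunov function structure: from the computation of $\dot{\mc V}$ in the proof of Theorem \ref{TH2}, at $x^\star_\tx{s}$ the quadratic form $-\tilde\zeta^\top \m Q\tilde\zeta$ is negative definite under \eqref{eq:stability condition}. Repeating the same algebra at $x^\star_\tx{u}$ with the modified angle linearization, the candidate Lyapunov function $\bar{\mc V} \coloneqq \mc H(\tilde y) - \lambda\,\tfrac12 \bar\theta^2$ (or its nonlinear analogue $\mc H(\tilde y) + 2\lambda(\cos(\bar\theta/2)-1)$, which is negative in the $\bar\theta$ direction near the equilibrium) has a time-derivative that is \emph{positive} definite along linearized trajectories in a neighborhood, because the sole sign change is exactly in the angle damping term and it flips the corresponding diagonal entry of $\m Q$. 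Since $\bar{\mc V}$ takes positive values arbitrarily close to the equilibrium (on the Euclidean subspace) while $\dot{\bar{\mc V}} > 0$ there, Chetaev's instability theorem \cite[Th.~4.3]{khalil_nonlinear_2002} applies and $x^\star_\tx{u}$ is unstable. Equivalently, one notes the linearization has at least one positive real-part eigenvalue — the perturbation is rank-one and strictly sign-definite, so by a continuity/interlacing argument the stability of the whole matrix is destroyed in exactly one mode.

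The third step upgrades instability to a zero-measure region of attraction. Having established that the Jacobian $\m A_\tx{u}$ at $x^\star_\tx{u}$ is hyperbolic (no eigenvalues on the imaginary axis — this follows because \eqref{eq:stability condition} forces all other modes strictly into the open left half-plane, and the perturbed mode strictly into the open right half-plane), the stable manifold theorem \cite[Th.~4.1, Ch.~13]{coddington_levinson} guarantees that locally the set of initial conditions converging to $x^\star_\tx{u}$ is an embedded submanifold of dimension strictly less than $\dim\mathbb X = 9$ (the dimension of the stable eigenspace of $\m A_\tx{u}$), hence of zero Lebesgue measure. Globally, the region of attraction of $x^\star_\tx{u}$ is contained in the global stable manifold $W^s(x^\star_\tx{u})$, which is an immersed submanifold of the same dimension, hence still Lebesgue-null (a countable union of zero-measure immersed charts). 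This is the standard mechanism used for analogous claims, e.g., \cite[Prop.~6]{CGBF19}.

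I expect the main obstacle to be step two: cleanly verifying that flipping the sign of the single angle-damping term in $\m Q_{11}$ converts the negative-definite form into one with a genuinely positive eigenvalue along the system dynamics, \emph{uniformly under the same parametric condition} \eqref{eq:stability condition} without needing extra assumptions. The subtlety is that $\m Q_{11}$ involves the free parameters $\epsilon_1, \epsilon_2, \lambda$ chosen to make Theorem \ref{TH2} work; one must check that with the sign flip the relevant $2\times 2$ principal minor (the $\bar\theta$–$\tilde v_\tx{dc}$ block) becomes indefinite, which it does because its trace picks up $-\gamma\lambda$ but its determinant changes sign. Carefully handling the nonlinear remainder terms so that Chetaev's theorem (rather than merely the linearization) applies, and confirming hyperbolicity so the stable manifold theorem is legitimately invoked, will be the delicate bookkeeping; everything else mirrors the proof of Theorem \ref{TH2} line for line.
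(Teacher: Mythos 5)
Your route---linearize at $x^\star_\tx{u}$, observe the sign flip of the angle--damping term, run a Chetaev argument built on the saddle structure of Remark \ref{rem:saddle}, and finish with an invariant-manifold theorem---is genuinely different from the paper's, which never invokes Chetaev: the paper factors $\det\big(\m{J}_\tx{f}(x^\star_\tx{u})\big)$ through a Schur complement, shows the symmetric part of the $7\times 7$ complement $\m{K}$ is negative definite under \eqref{eq:stability condition} so that $\det(\m{K})<0$ (odd dimension), deduces $\det\big(\m{J}_\tx{f}(x^\star_\tx{u})\big)>0$, and uses the parity of the $9$-dimensional state space to extract at least one positive \emph{real} eigenvalue; instability then follows from Lyapunov's indirect method and the measure-zero claim from \cite[Prop. 11]{monzon_local_2006}, which rests on the center-stable manifold theorem and needs no hyperbolicity. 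Your outline could be made to work, but as written it has two genuine gaps. The first is a sign inversion in the Chetaev step. With $\bar\theta\coloneqq\theta-\theta^\star_2$ the angle error dynamics at $x^\star_\tx{u}$ read $\dot{\bar\theta}=\eta\tilde{v}_\tx{dc}+\gamma\sin\big(\bar\theta/2\big)$, so for your candidate $\mc{H}(\tilde{y})+2\lambda\big(\cos(\bar\theta/2)-1\big)$ the angle contribution to the derivative is $-\lambda\eta\tilde{v}_\tx{dc}\sin\big(\bar\theta/2\big)-\lambda\gamma\sin^2\big(\bar\theta/2\big)$; the whole derivative is therefore bounded above by $-\bar\zeta^\top\m{Q}\bar\zeta<0$, i.e.\ it is \emph{negative}, not positive, definite. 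The function meeting the hypotheses of Chetaev's theorem \cite[Th. 4.3]{khalil_nonlinear_2002} is the negative of yours, $-\mc{H}(\tilde{y})+2\lambda\big(1-\cos(\bar\theta/2)\big)$, which is positive on an open set touching $x^\star_\tx{u}$ and increasing there. This is repairable, but the step as stated is false. (A smaller slip: $\partial_\theta\psi(\theta^\star_2)=+\partial_\theta\psi(\theta^\star_1)$ because $\psi$ is $2\pi$-periodic, so there is no second sign flip to ``absorb''; your conclusion that only the $(1,1)$ Jacobian entry changes is nevertheless correct.)

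The second gap is more serious: the measure-zero claim requires a \emph{spectral} fact---that $\m{J}_\tx{f}(x^\star_\tx{u})$ has an eigenvalue with positive real part, plus hyperbolicity if you insist on the stable rather than center-stable manifold theorem---and Chetaev instability of the nonlinear system does not imply it. Your justification (``rank-one, sign-definite perturbation destroys stability in exactly one mode by a continuity/interlacing argument'') is not valid here, since eigenvalue interlacing applies to symmetric matrices and $\m{J}_\tx{f}$ is not symmetric; hyperbolicity is asserted rather than proved. Both facts can be supplied either by the paper's determinant/parity argument or, staying within your own framework, by the general inertia theorem: the Hessian $\m{P}_2=\mathrm{blkdiag}(-\lambda/2,\m{P})$ of $\mc{H}(\tilde{y})-2\lambda\big(1-\cos(\bar\theta/2)\big)$ is symmetric, nonsingular, has exactly one negative eigenvalue, and your derivative bound forces $\m{P}_2\m{A}_\tx{u}+\m{A}_\tx{u}^\top\m{P}_2\prec 0$ for the linearization $\m{A}_\tx{u}$, whence $\m{A}_\tx{u}$ is hyperbolic with exactly one eigenvalue in the open right half-plane. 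Without one of these additions the final step of your proof does not go through.
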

The proof is provided in Appendix \ref{App:proofs}. Remark \ref{rem:saddle} reveals the topological peculiarity of the Lyapunov function \eqref{eq:LF} at $x^\star_\tx{u}$.
%-------------------------------------------------------------------------------------------
\begin{remark}\textup{{(Saddle point of the Lyapunov function)}}\label{rem:saddle}\\
	The unstable equilibrium $x^\star_\tx{u} = ( \theta^\star_2 , y^{\star} )$ is a min-max saddle point of the Lyapunov function $\mc{V}\left( x - x^\star_\tx{s}\right) = \mc{V}\left( \theta - \theta^\star_1 , y - y^\star\right)$ in \eqref{eq:LF} i.e., it can be shown that
	\begin{equation*}
	\mc{V}\left( \theta - \theta^\star_1 , 0_8 \right) 
	\leq 
	\mc{V}\left( \theta^\star_2 -\theta^\star_1 , 0_8 \right)
	\leq
	\mc{V}\left( \theta^\star_2 - \theta^\star_1 , y - y^\star\right),
	\end{equation*}
	for all $(\theta,y)$ in an open neighborhood of  $x^\star_\tx{u}$. This is due to the fact that the global minimum of $\mc{H}\left( y - y^\star \right)$ in $\mathbb{R}^8$ and the maximum of $\mc{S}\left( \theta - \theta_\tx{r} \right)$ in $\mathbb{M}$  coincide at $ x^\star_\tx{u} $;  see Figure \ref{fig:LF} for an abstract illustration assuming that $y,\theta\in\mathbb{R}$.
\end{remark}
%--------------------------------------------------------------------------------------
\begin{figure}[b!]
\centering
{\includegraphics[trim=2mm 2mm 0 0.9mm,clip,width=\columnwidth]{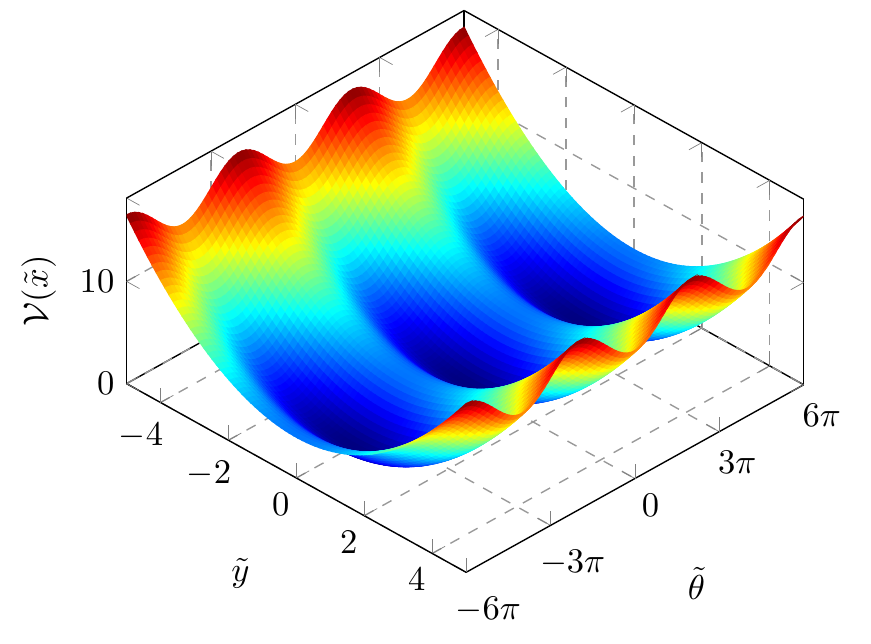}}
\caption{Illustration of the Lyapunov function \eqref{eq:LF} under the simplifying assumption that $ \tilde{y},\tilde{\theta}\in\mathbb{R}$, $\m{P}=1$, and $\lambda = 1$. \label{fig:LF}}
\end{figure}
%--------------------------------------------------------------------------------------
Having established the intermediate results, Theorem \ref{TH:AGAS} presents our main result i.e., \ac{agas}  of system \eqref{eqs:sys} \ac{wrt}  $x^\star_\tx{s}$.% in \eqref{eq:EqSet}. 
%--------------------------------------------------------------------------------------
\begin{theorem}\textup{{(Main result: \ac{agas})}}	\label{TH:AGAS}\\
	Consider the closed-loop system \eqref{eqs:sys} and assume condition \eqref{eq:stability condition} holds. Then $x^\star_\tx{s}$ in \eqref{eq:EqSet} is almost globally asymptotically stable.
\end{theorem}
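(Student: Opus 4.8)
The plan is to assemble the three intermediate results already in hand—Theorem~\ref{TH2} (global attractivity to $\Omega^\star$), Proposition~\ref{prop:LAS} (local asymptotic stability of $x^\star_\tx{s}$), and Proposition~\ref{prop: instability} (instability of $x^\star_\tx{u}$ with zero-measure region of attraction)—into the definition of \ac{agas} (Definition~\ref{def:agas}); once these are available, only a short topological bookkeeping argument remains. \textbf{Step 1 (convergence to a single equilibrium).} By Theorem~\ref{TH2}, condition~\eqref{eq:stability condition} ensures that every trajectory of~\eqref{eqs:sys} converges to the two-point set $\Omega^\star=\{x^\star_\tx{s},x^\star_\tx{u}\}$ in~\eqref{eq:EqSet}. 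As established in the proof of Theorem~\ref{TH2}, each sublevel set $\mathscr{L}_{\mc{V}(\tilde{x}(0))}$ is compact and forward invariant, so every trajectory is bounded and its $\omega$-limit set is nonempty, compact, connected, and invariant. Since $\Omega^\star$ consists of two \emph{disjoint} equilibria, the only such subsets are $\{x^\star_\tx{s}\}$ and $\{x^\star_\tx{u}\}$; hence every trajectory converges to exactly one of the two equilibria.

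\textbf{Step 2 (partition and measure argument).} Let $\mathcal{B}_\tx{s},\mathcal{B}_\tx{u}\subseteq\mathbb{X}$ be the sets of initial conditions whose trajectories converge to $x^\star_\tx{s}$ and to $x^\star_\tx{u}$, respectively. By Step~1, $\mathbb{X}=\mathcal{B}_\tx{s}\cup\mathcal{B}_\tx{u}$ with $\mathcal{B}_\tx{s}\cap\mathcal{B}_\tx{u}=\varnothing$. By Proposition~\ref{prop: instability}, $\mathcal{B}_\tx{u}$—the region of attraction of $x^\star_\tx{u}$—has zero Lebesgue measure, so $\mathcal{B}_\tx{s}=\mathbb{X}\setminus\mathcal{B}_\tx{u}$ has full measure; that is, all trajectories except those starting in a Lebesgue-null set converge to $x^\star_\tx{s}$. \textbf{Step 3 (conclusion).} Proposition~\ref{prop:LAS} gives local asymptotic stability of $x^\star_\tx{s}$ under~\eqref{eq:stability condition}. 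Combining this with the almost-global attractivity of $x^\star_\tx{s}$ from Step~2 and invoking Definition~\ref{def:agas} yields that $x^\star_\tx{s}$ is almost globally asymptotically stable, which proves the theorem.

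\textbf{Main obstacle.} The genuinely hard work lies in the results this proof draws on rather than in the assembly itself: the novel bounding scheme for the trigonometric error term $\psi(\theta)-\psi(\theta^\star)$ underpinning Theorem~\ref{TH2}, and the zero-measure characterization of the stable set of $x^\star_\tx{u}$ in Proposition~\ref{prop: instability} (which typically rests on a center/stable-manifold or Lyapunov-density argument on $\mathbb{M}\times\mathbb{R}^8$, using the saddle structure noted in Remark~\ref{rem:saddle}). Within the present argument, the only point demanding care is the use of connectedness of $\omega$-limit sets to upgrade ``convergence to the two-element set $\Omega^\star$'' into ``convergence to a single equilibrium''; this step crucially exploits that the two equilibria in~\eqref{eq:EqSet} are disjoint, which holds precisely because the angle evolves on $\mathbb{M}$ and not on $\mathbb{S}^1$.
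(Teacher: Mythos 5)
Your proposal is correct and takes essentially the same route as the paper, which leaves this assembly implicit after stating that the theorem follows from the intermediate results (Theorem~\ref{TH2}, Proposition~\ref{prop:LAS}, and Proposition~\ref{prop: instability}). Your explicit use of the connectedness of $\omega$-limit sets to upgrade convergence to the two-point set $\Omega^\star$ into convergence to a single equilibrium is a welcome piece of rigor that the paper glosses over.
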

%--------------------------------------------------------------------------------------
\section{Dynamic Grid and Current-Limiting Control}\label{sec:extensions}
In this section, we construct two extensions on the basis of the robust results in Section \ref{sec:closed-loop analysis}. First, we consider the connection of the converter to a dynamic grid represented by an \ac{coi} model and investigate closed-loop stability. Second, we account for the converter current constraint and design a  novel current-limiting control that is compatible with \ac{hac}.   
\subsection{Modeling the Connection of Converter and Dynamic Grid}\label{subsec:COI}
Under the slow coherency assumptions \cite[Chap. 2]{C13}\cite{RDB13}, an interconnected network of \ac{sg}s can be represented with an equivalent \ac{coi} model that relies on the aggregation of swing dynamics \cite[Sec. 6.10]{SP98}\cite[Sec. 3.2]{UBA14}. The angle and frequency dynamics of the \ac{coi} grid model are described by
\begin{subequations}\label{eqs:COI}
	\begin{align}
	\dot{\theta}_\tx{g}&=\omega,
	\label{eqs:COI1}
	\\
	J\dot{\omega}&=T_\tx{m}-D\omega-T_\tx{e},
	\label{eqs:COI2}
	\end{align}
\end{subequations}
where $J\in\mathbb{R}_{>0}$ is the moment of inertia and it is defined (in terms of the base power $S_\tx{r,g}$ and the inertia constant $H$) by $J:=2HS_\tx{r,g}/\omega_0^2$ \cite[Eq. 5.10]{machowski2020power}. Moreover, $T_\tx{m}\in\mathbb{R}$ denotes the mechanical torque, $D\in\mathbb{R}_{>0}$ denotes the aggregated damping and droop coefficient that models the aggregated governor action, and $T_\tx{e}\in\mathbb{R}$ is the electrical torque. %The reader is referred to \cite[Sec. 3.2]{UBA14} for details on computation of aggregated parameters in \eqref{eqs:COI}. 
Considering the structural similarity of \eqref{eqs:COI} and the full SG dynamics \cite{BSEO17}, we define the dynamic grid voltage as
\begin{equation}\label{eqs:COI3}
v_\tx{g,abc}  \coloneqq  b\omega 
\big( \sin\theta_\tx{g} , \sin\left( \theta_\tx{g} - {2\pi} / {3} \right) , \sin\left( \theta_\tx{g} + {2\pi} / {3} \right) \big),
\end{equation}
where $b\in\mathbb{R}_{>0}$ is constant \cite[Eq. 9]{BSEO17}. Thus, $T_\tx{e}$ in \eqref{eqs:COI2} can be written in terms of grid voltage and current as \cite[Eq. 10]{BSEO17}
\begin{equation}\label{eqs:COI4}
T_\tx{e}=\omega^{-1}i^\top _\tx{g,abc}v^{}_\tx{g,abc}. 
\end{equation}
Note that if $J\rightarrow\infty$ and $b$ is chosen such that $\n{v_\tx{g}}\rightarrow v_\tx{r}$ as $\omega\rightarrow\omega_0$, \eqref{eqs:COI}-\eqref{eqs:COI4} recovers the \ac{ib} grid model with constant frequency and voltage magnitude (see Section \ref{subsec:CIB}). %We remark that the forthcoming analysis also applies to the case with $\n{v_\tx{g}}=v_\tx{m}$. 

Consider a converter controlled by \ac{hac} and connected to the \ac{coi} grid model via an inductive line; see Figure \ref{fig:SCIBsys}. Combining \eqref{eqs:converter}, \eqref{eq:i_dc}, \eqref{eq:omega_c}, and \eqref{eqs:COI}-\eqref{eqs:COI4} the overall closed-loop dynamics in a dq-frame aligned with $\theta_\tx{g}$ is
\begin{subequations}\label{eqs:sys COI}
	\begin{align}
	\dot{\theta}&=\omega_0+\eta(v_\tx{dc}-v_\tx{dc,r})-\gamma\sin\big((\theta-\theta_\tx{r})/2\big)-\omega,
	\label{eqs:sys COI1}
	\\
	\tau_\tx{dc}\dot{i}_\tx{dc}&=i_\tx{r}-\kappa(v_\tx{dc}-v_\tx{dc,r})-i_\tx{dc},
	\label{eqs:sys COI2}
	\\
	c_\tx{dc} \dot{v}_\tx{dc} & = i_\tx{dc} - g_\tx{dc} v_\tx{dc} - m^\top  i,
	\label{eqs:sys COI3}
	\\
	\ell \dot{i}&=v_\tx{dc}m-\m{Z}(\omega)i-v,
	\label{eqs:sys COI4}
	\\
	c \dot{v}&=i-\m{Y}(\omega)v-i_\tx{g},
	\label{eqs:sys COI4+}
	\\
	\ell_\tx{g}\dot{i}_\tx{g}&=v-\m{Z}_\tx{g}(\omega)i_\tx{g}-b\omega e_1,
	\label{eqs:sys COI5}
	\\
	J\dot{\omega}&=T_\tx{m}-D\omega+b e_1^\top i_\tx{g},
	\label{eqs:sys COI6}
	\end{align}
\end{subequations} 
where $\theta \coloneqq \theta_\tx{c}-\theta_\tx{g}$ denotes the converter relative angle \ac{wrt}  the \ac{coi} and $e_1 \coloneqq (1,0)$. Note the impedance and admittance matrices in \eqref{eqs:sys COI4}-\eqref{eqs:sys COI5} are frequency-dependent (cf. to constant matrices in \eqref{eqs:sys3}-\eqref{eqs:sys5}).
\subsection{Equilibria Characterization and Closed-Loop Stability}
To begin with, define the augmented closed-loop state vector
\begin{equation}\label{eq:statesSG}
\ubar{x}  \coloneqq  ( \theta , i_\tx{dc}, v_\tx{dc} , \omega , {i} , {v} , {i}_\tx{g} ) \in \ubar{\mathbb{X}}  \coloneqq  \mathbb{M} \times \mathbb{R}^9.
\end{equation}
Assume that the reference dc current and mechanical torque in \eqref{eqs:sys COI2} and \eqref{eqs:sys COI6} are set such that the equilibrium dc voltage and frequency coincide with $v_\tx{dc,r}$ and $\omega_0$. Following the same procedure as in the proof Theorem \ref{TH1}, dynamical system \eqref{eqs:sys COI} admits two equilibria with the same structure as $\Omega^\star$ in \eqref{eq:EqSet} i.e.,
\begin{equation}	\label{eq:EqSetCOI}
\ubar{\Omega}^\star 
 \coloneqq  
\big\{ 
\ubar{x}^\star_\tx{s}  \coloneqq  (\theta^\star_1 , \ubar{y}^{\star}) ,
\ubar{x}^\star_\tx{u}  \coloneqq  (\theta^\star_2 , \ubar{y}^{\star}) 
\big\},
\end{equation}
with $ \theta_1^\star   \coloneqq   \theta_\tx{r} $, $ \theta_2^\star   \coloneqq   \theta_\tx{r} + 2\pi $, and  $\ubar{y}^{\star}  \coloneqq  ( {i}^{\star}_\tx{dc} , {v}^{\star}_\tx{dc}, \omega^\star , {i}^{\star} , {v}^{\star} , {i}^{\star}_\text{g} )$ denoting the unique equilibrium associated with states evolving in $\mathbb{R}^9$. Theorem \ref{TH4} delivers the same result as Theorem \ref{TH2} i.e., global stability of $\ubar{\Omega}^\star$ under the flow defined by \eqref{eqs:sys COI}.
 \begin{theorem}\textup{{(Global stability with dynamic grid model) } }\label{TH4}	\\
 	Consider the closed-loop system \eqref{eqs:sys COI} and the equilibria $\ubar{\Omega}^\star$ defined in \eqref{eq:EqSetCOI}. If 
 	the system and control parameters satisfy
 	\begin{subequations}\label{eqs:stability conditions COI}
 		\begin{align}
 		&D>{D}_{{\tx{min}}} \coloneqq \dfrac{\left(\ell\n{i^\star}\right)^2}{r}+\dfrac{\left(c\n{v^\star}\right)^2}{g}+\dfrac{\left(\ell_\tx{g}\n{i_\tx{g}^\star}\right)^2}{r_\tx{g}},
 		\label{eqs:stability conditions COI1}
 		\\
 		&\gamma>\frac{ \eta }{g_\tx{dc} }  + \frac{ \eta \left( \mu_\tx{r} \n{ i^{\star} } \right )^{ 2 } } {g_\tx{dc} } + \frac{ \eta ( \mu_\tx{r} v_\tx{dc}^{\star} ) ^ {2} }{ r }+\dfrac{1}{2(D-{D}_{{\tx{min}}})},
 		\label{eqs:stability conditions COI2}
 		\end{align}
 	\end{subequations}
 	then all trajectories of \eqref{eqs:sys COI} globally converge to $\ubar{\Omega}^\star$.
 \end{theorem}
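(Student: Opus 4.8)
The plan is to replay the proof of Theorem~\ref{TH2} with the grid frequency $\omega$ adjoined to the Euclidean state and the frequency-dependent matrices $\m{Z}(\omega),\m{Y}(\omega),\m{Z}_\tx{g}(\omega)$ treated as structured perturbations of the stiff-grid case; existence of $\ubar{\Omega}^\star$ is already granted before the theorem. First I would pass to error coordinates $\ubar{\tilde{x}}\coloneqq\ubar{x}-\ubar{x}^\star_\tx{s}$ about the stable equilibrium (angle $\theta^\star_1=\theta_\tx{r}$) and write the error dynamics of \eqref{eqs:sys COI}. Three features distinguish them from \eqref{eqs:ErrSys}: (i) the angle equation now carries the grid frequency, $\dot{\tilde\theta}=\eta\tilde{v}_\tx{dc}-\gamma\sin(\tilde\theta/2)-\tilde\omega$; (ii) the expansion $\m{Z}(\omega)i-\m{Z}(\omega_0)i^\star=\m{Z}(\omega_0)\tilde{i}-\ell\tilde\omega\m{J}i$ (and the analogues for $\m{Y},\m{Z}_\tx{g}$) injects bilinear terms $\propto\tilde\omega\m{J}(\cdot)$ into the $\tilde{i},\tilde{v},\tilde{i}_\tx{g}$ channels; and (iii) the swing channel reads $J\dot{\tilde\omega}=-D\tilde\omega+b\,e_1^\top\tilde{i}_\tx{g}$, while the grid-voltage term $b\omega e_1$ in \eqref{eqs:sys COI5} contributes $-b\tilde\omega e_1^\top\tilde{i}_\tx{g}$ to the $\tilde{i}_\tx{g}$ channel.

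I would then take the LaSalle/Lyapunov function \eqref{eq:LF} augmented by grid kinetic energy, $\mc{V}(\ubar{\tilde{x}})\coloneqq\tfrac12\ubar{\tilde{y}}^\top\ubar{\m{P}}\ubar{\tilde{y}}+2\lambda\big(1-\cos(\tilde\theta/2)\big)$ with $\ubar{\m{P}}\coloneqq\mathrm{blkdiag}(\tau_\tx{dc}/\kappa,c_\tx{dc},J,\ell\m{I},c\m{I},\ell_\tx{g}\m{I})$; the weight $J$ on $\tilde\omega$ is exactly what makes the two $b$-terms above cancel in $\dot{\mc{V}}$. Differentiating along the error dynamics, exploiting skew-symmetry of $\m{J}$ (so the $\tilde\omega\m{J}(\cdot)$ terms collapse to $\ell\tilde\omega\tilde{i}^\top\m{J}i^\star+c\tilde\omega\tilde{v}^\top\m{J}v^\star+\ell_\tx{g}\tilde\omega\tilde{i}_\tx{g}^\top\m{J}i_\tx{g}^\star$, because $\tilde{i}^\top\m{J}\tilde{i}=0$, etc.), and invoking the $e_\psi$ identities from the proof of Theorem~\ref{TH2}, one finds that $\dot{\mc{V}}(\ubar{\tilde{x}})$ equals the right-hand side of \eqref{eq:Vdot bound} augmented by $-D\tilde\omega^2-\lambda\tilde\omega\sin(\tilde\theta/2)+\ell\tilde\omega\tilde{i}^\top\m{J}i^\star+c\tilde\omega\tilde{v}^\top\m{J}v^\star+\ell_\tx{g}\tilde\omega\tilde{i}_\tx{g}^\top\m{J}i_\tx{g}^\star$.

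The next step is to dominate the three bilinear $\m{J}$-cross terms by Young's inequality against the physical dissipation $-r\n{\tilde{i}}^2,-g\n{\tilde{v}}^2,-r_\tx{g}\n{\tilde{i}_\tx{g}}^2$ already present: the split $ab\le\tfrac{c}{4}a^2+\tfrac{b^2}{c}$ applied with $c=r,g,r_\tx{g}$ transfers precisely $(\ell\n{i^\star})^2/r,(c\n{v^\star})^2/g,(\ell_\tx{g}\n{i_\tx{g}^\star})^2/r_\tx{g}$ onto $\tilde\omega^2$, so the frequency dissipation becomes $-(D-D_\tx{min})\tilde\omega^2$, negative definite exactly under \eqref{eqs:stability conditions COI1}, while only a quarter of $-r\n{\tilde{i}}^2$ (and likewise of $-g\n{\tilde{v}}^2,-r_\tx{g}\n{\tilde{i}_\tx{g}}^2$) is consumed by this step, leaving enough to run the $e_\psi$ bounds of Theorem~\ref{TH2} verbatim. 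What remains is the genuinely new coupling $-\lambda\tilde\omega\sin(\tilde\theta/2)$ from feature~(i); carrying it into the quadratic form gives $\dot{\mc{V}}(\ubar{\tilde{x}})\le-\ubar{\tilde\zeta}^\top\ubar{\m{Q}}\ubar{\tilde\zeta}$ with $\ubar{\tilde\zeta}\coloneqq(\sin(\tilde\theta/2),\ubar{\tilde{y}})$, where $\ubar{\m{Q}}$ is the matrix $\m{Q}$ of Theorem~\ref{TH2} enlarged by a $\tilde\omega$ row/column with diagonal entry $D-D_\tx{min}$ and off-diagonal $\lambda/2$ to $\sin(\tilde\theta/2)$. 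A Schur-complement computation — optimized over $\epsilon_1,\epsilon_2,\lambda$ and the damping split as in Theorem~\ref{TH2} — then yields $\ubar{\m{Q}}\succ0$ precisely under \eqref{eqs:stability conditions COI1}–\eqref{eqs:stability conditions COI2}, the extra summand $\propto 1/(D-D_\tx{min})$ in \eqref{eqs:stability conditions COI2} being exactly the price of Schur-eliminating the new $\sin(\tilde\theta/2)$–$\tilde\omega$ link. The argument closes as in Theorem~\ref{TH2}: $\mc{V}$ is radially unbounded in $\ubar{\tilde{y}}$ and bounded on $\mathbb{M}$, so its sublevel sets are compact and forward invariant; by LaSalle's invariance principle all trajectories converge to the largest invariant set in $\{\dot{\mc{V}}=0\}=\{\sin(\tilde\theta/2)=0,\ \ubar{\tilde{y}}=0\}$, i.e. to $\tilde\theta\in\{0,2\pi\}$ with $\ubar{\tilde{y}}=0$ — exactly $\ubar{\Omega}^\star$.

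I expect the main obstacle to be the bookkeeping forced by the frequency-dependent matrices: verifying that their net effect on $\dot{\mc{V}}$ is \emph{only} the three bilinear $\tilde\omega\m{J}(\cdot)$ terms (this hinges on $\m{Z}(\omega)i-\m{Z}(\omega_0)i^\star=\m{Z}(\omega_0)\tilde{i}-\ell\tilde\omega\m{J}i$ together with $\tilde{i}^\top\m{J}\tilde{i}=0$), and then choosing the Young splits so that the two transparent conditions \eqref{eqs:stability conditions COI1}–\eqref{eqs:stability conditions COI2} emerge rather than a single opaque joint inequality. A secondary point is confirming the cancellation of the two $b$-terms (line back-EMF versus electrical torque), which is what keeps the swing state from introducing a $\tilde\omega$–$\tilde{i}_\tx{g}$ cross term that would otherwise also have to be dominated.
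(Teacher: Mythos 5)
Your proposal is correct and follows essentially the same route as the paper's proof: the same error coordinates, the same Lyapunov function augmented with the kinetic term $\tfrac{J}{2}\tilde{\omega}^2$ (which indeed cancels the two $b$-terms), the same isolation of the three $\tilde{\omega}\m{J}(\cdot)$ cross terms via skew-symmetry, the same Young splits (your $c/4$ split is exactly the paper's choice $\ubar{\epsilon}_3=\sqrt{r}/2$, $\ubar{\epsilon}_4=\sqrt{g}/2$, $\ubar{\epsilon}_5=\sqrt{r_\tx{g}}/2$, producing $D_{\tx{min}}$), the same enlarged quadratic form with a $\sin(\tilde{\theta}/2)$--$\tilde{\omega}$ off-diagonal entry $\ubar{\lambda}/2$, and the same Schur-complement and LaSalle closing argument. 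No gaps.
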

 %----------------------------------------------------------------------------------------------
 \begin{proof}
Define the error coordinates as $\tilde{\ubar{x}} \coloneqq  \ubar{x} - \ubar{x}^\star_\tx{s}$. The error dynamics associated with \eqref{eqs:sys COI} are described by 
\begin{subequations}\label{eqs:ErrSysSG}
	\begin{align*}
	\dot{\tilde{\theta}} & = \eta \tilde{v}_\tx{dc} - \gamma \sin\big( \tilde{\theta} / 2 \big) - \tilde{\omega},
	\\
	\tau_\tx{dc} \dot{\tilde{i}}_\tx{dc} & = -\kappa \tilde{v}_\tx{dc} - \tilde{i}_\tx{dc},
	\\
	c_\tx{dc} \dot{ \tilde{v} }_\tx{dc} & = \tilde{i}_\tx{dc} - g_\tx{dc} \tilde{v}_\tx{dc} - \mu_\tx{r} e_{\psi}^\top i^\star - m(\theta)^\top \tilde{i}
	\\	
	\ell \dot{\tilde{i}} & = \tilde{v}_\tx{dc} m\big(\tilde{\theta}\big) + \mu_\tx{r} v_\tx{dc}^\star e_{\psi} - r \tilde{i} - \ell \omega \m{J} \tilde{i} - \ell \tilde{\omega} \m{J} i^\star - \tilde{v},
	\\
	c \dot{ \tilde{v} } & = \tilde{i} - g \tilde{v} - c \omega \m{J} \tilde{v} - c \tilde{\omega} \m{J} v^\star - \tilde{i}_\tx{g},
	\\
	\ell_\tx{g} \dot{\tilde{i}}_\tx{g} & = \tilde{v} - r_\tx{g} \tilde{i}_\tx{g} - \ell_\tx{g} \omega \m{J} \tilde{i}_\tx{g} - \ell_\tx{g} \tilde{\omega} \m{J} i^\star_\tx{g} - b e_1 \tilde{\omega},
	\\
	J \dot{\tilde{\omega}} & = - D \tilde{\omega} + b e^\top_1 \tilde{i}_\tx{g}.
	\end{align*}
\end{subequations}
where $ e_{\psi}  \coloneqq  \psi( \theta) - \psi( \theta^\star_\tx{1} ) $. Define $\tilde{\ubar{y}} \coloneqq (\tilde{i}_\tx{dc},\tilde{v}_\tx{dc},\tilde{\omega},\tilde{i},\tilde{v},\tilde{i}_\tx{g})\in\mathbb{R}^9$, and consider the following Lyapunov function
\begin{equation*}
\ubar{\mathcal{V}}(\tilde{\ubar{x}}) \coloneqq %\dfrac{1}{2}\left(\tilde{\ubar{y}}^\top \m{\ubar{P}} \tilde{\ubar{y}}\right) + 2\ubar{\lambda}\int_{0}^{\tilde{\theta}}\sin\big(\tilde{\theta}/2\big)(s)\tx{d}s,
\dfrac{1}{2}\big(\tilde{\ubar{y}}^\top \m{\ubar{P}} \tilde{\ubar{y}}\big) + 2 \ubar{\lambda}\left( 1 - \cos \dfrac {\tilde{\theta}}{2} \right)
\end{equation*}
where $\m{\ubar{P}} \coloneqq \mathrm{blkdiag}(\tau_\tx{dc}/\kappa,c_\tx{dc},J,\ell\m{I},c\m{I},\ell_\tx{g}\m{I})$ and $\ubar{\lambda}\in\mathbb{R}_{>0}$. Evaluating $\dot{\ubar{\mathcal{V}}}(\tilde{x})$ along the error trajectories yields
\begin{align}\label{eq:VdotCOI}
\dot{ \ubar{\mathcal{V}} }( \tilde{x} ) =
& - \dfrac {1} {\kappa} \tilde{i}_\tx{dc}^2 - g_\tx{dc} \tilde{v}_\tx{dc}^2 - D \tilde{\omega}^2 - r \n{\tilde{i}}^2 - g\n{\tilde{v}}^2 - r_\tx{g} \n{\tilde{i}_\tx{g}}^2
\nonumber
\\
& - \ubar{\lambda} \gamma \sin^2\big( \tilde{\theta} / 2 \big) + \ubar{\lambda} \eta \tilde{v}_\tx{dc} \sin\big( \tilde{\theta} / 2 \big) + \ubar{\lambda} \tilde{\omega} \sin\big( \tilde{\theta} / 2 \big)
\nonumber
\\
& - \tilde{i}^\top ( \ell\m{J} i^\star ) \tilde{\omega} - \tilde{v}^\top ( c \m{J} v^\star ) \tilde{\omega} - \tilde{i}^\top_\tx{g} ( \ell_\tx{g} \m{J} i^\star_\tx{g} ) \tilde{\omega}
\nonumber
\\
& - \mu_\tx{r} e_{\psi}^\top i^\star \tilde{v}_\tx{dc} + \mu_\tx{r} v^\star_\tx{dc} \tilde{i}^\top e_{\psi}.
\end{align}
%\end{subequations}
From the proof of Theorem \ref{TH2} recall that $\n{e_{\psi}}^2=4\sin^2\big(\tilde{\theta}/2\big)$ and apply \eqref{id:binomial} to the cross-terms in \eqref{eq:VdotCOI} that depend on $v^\star_\tx{dc}$, $i^\star$, $v^\star$, and $i_\tx{g}^\star$. Then $\dot{\ubar{\mathcal{V}}}(\tilde{\ubar{x}})$ is upper-bounded by 
\begin{align}\label{eq:VdotBoundSG}
\dot{ \ubar{\mathcal{V}} }( \tilde{\ubar{x}} ) \leq
& - \dfrac {1} {\kappa} \tilde{i}_\tx{dc}^2 - \left( g_\tx{dc} - \left( \ubar{\epsilon}_1 \mu_\tx{r} \n{i^\star} \right)^2 \right) \tilde{v}_\tx{dc}^2 - ( D - \ubar{\alpha} ) \tilde{\omega}^2 
\nonumber
\\
& - \big( r - \ubar{\epsilon}_2^2 - \ubar{\epsilon}_3^2 \big) \n{\tilde{i}}^2 - \big( g - \ubar{\epsilon}_4^2 \big) \n{\tilde{v}}^2 - \big( r_\tx{g} - \ubar{\epsilon}_5^2 \big) \n{\tilde{i}_\tx{g}}^2
\nonumber
\\
& - \left( \gamma \ubar{\lambda} - \dfrac {1} {\ubar{\epsilon}_1^2} - \dfrac {( \mu_\tx{r} v_\tx{dc}^\star )^2} {\ubar{\epsilon}_2^2} \right) \sin^2\big( \tilde{\theta} / 2 \big)
\nonumber
\\
& - \ubar{\lambda} \tilde{\omega} \sin\big( \tilde{\theta} / 2 \big) + \ubar{\lambda} \eta \tilde{v}_\tx{dc} \sin\big( \tilde{\theta} / 2 \big)
\end{align}
%\end{subequations}
where $\ubar{\epsilon}_j\in\mathbb{R}_{>0}$ for $j=1,...,5$ and 
\begin{equation*}
\ubar{\alpha} \coloneqq \left( \dfrac { \ell \n{i^\star} } { 2 \ubar{\epsilon}_3 } \right)^2 + \left( \dfrac { c \n{v^\star} } { 2 \ubar{\epsilon}_4 } \right)^2 + \left( \dfrac { \ell_\tx{g} \n{i_\tx{g}^\star} } { 2 \ubar{\epsilon}_5 } \right)^2.
\end{equation*} 
Defining $\ubar{\zeta} \coloneqq \big( \sin\big( \tilde{\theta} / 2 \big) , \tilde{\ubar{y}} \big)$ the bound in \eqref{eq:VdotBoundSG} takes the quadratic form i.e., 
%\begin{equation*}
$\dot{ \ubar{\mathcal{V}} }(\tilde{x})\leq-\ubar{\zeta}^\top\m{\ubar{Q}}\,\ubar{\zeta},$
%\end{equation*}
where $\m{\ubar{Q}}=\mathrm{blkdiag}\big(\m{\ubar{Q}}_{11},\m{\ubar{Q}}_{22}\big)$ with $\m{\ubar{Q}}_{11} \coloneqq $
\begin{equation*}
\begin{pmatrix}
\gamma\ubar{\lambda}-\dfrac{1}{\ubar{\epsilon}_1^2}-\dfrac{(\mu_\tx{r} v_\tx{dc}^\star)^2}{\ubar{\epsilon}_2^2}&0&-\dfrac{\eta\ubar{\lambda}}{2}&\dfrac{\ubar{\lambda}}{2}\\ 
0&\dfrac{1}{\kappa}&0&0\\
-\dfrac{\eta\ubar{\lambda}}{2}&0&g_\tx{dc}-\left({\ubar{\epsilon}_1\mu_\tx{r} \n{i^\star}}\right)^2&0\\
\dfrac{\ubar{\lambda}}{2}&0&0&D-\ubar{\alpha}
\end{pmatrix},
\end{equation*}
and $\m{\ubar{Q}}_{22} \coloneqq \mathrm{blkdiag}\big((r-\ubar{\epsilon}_2^2-\ubar{\epsilon}_3^2)\m{I},(g-\ubar{\epsilon}_4^2)\m{I},(r_\tx{g}-\ubar{\epsilon}_5^2)\m{I}\big).$
Choosing the free parameters as $\ubar{\lambda}=2/\eta$, $\ubar{\epsilon}_1=\sqrt{g_\tx{dc}}/\big(\sqrt{2}\mu_\tx{r} \n{i^\star}\big)$, $\ubar{\epsilon}_2=\sqrt{r/2}$, $\ubar{\epsilon}_3=\sqrt{r}/2$, $\ubar{\epsilon}_4=\sqrt{g}/2$ and $\ubar{\epsilon}_5=\sqrt{r_\tx{g}}/2$ renders $\m{\ubar{Q}}_{22}\succ0$. Under this favorable choice of parameters, $\m{\ubar{Q}}_\tx{11}\succ 0$ if and only if \eqref{eqs:stability conditions COI} is satisfied. Thus, $\m{\ubar{Q}}\succ 0$ and $\dot{\ubar{\mathcal{V}}}(\tilde{\ubar{x}})<0$ for all $\ubar{\zeta}\neq 0_{10}$. Following the LaSalle's invariance argument in the proof of Theorem \ref{TH2}, it is straightforward to show that the trajectories of \eqref{eqs:ErrSysSG} globally converge to the largest invariant set contained in $\ubar{\Omega} \coloneqq \big\{\tilde{\ubar{x}}\in\mathbb{\ubar{X}}:\dot{\ubar{\mathcal{V}}}(\tilde{\ubar{x}})=0\big\}$ and  $\ubar{\Omega}=\ubar{\Omega}^\star$.
\end{proof} 
%-------------------------------------------------------------------------------
\begin{remark}\textup{{(Extended stability conditions)} }\label{rem:COI}\\
Condition \eqref{eqs:stability conditions COI1} is met if the \ac{coi} model is sufficiently damped (see \cite{CT14,BSEO17,OVME02} for discussions on the critical damping requirement) and with large enough $\gamma$ condition \eqref{eqs:stability conditions COI2} is satisfied (see Remark \ref{rem:feasibility}). The conditions in \eqref{eqs:stability conditions COI}  \underline{almost} decouple into the previous stability condition (cf. \eqref{eqs:stability conditions COI2} and \eqref{eq:stability condition}) and the damping requirement \eqref{eqs:stability conditions COI1} (that can be perceived as characterization of the grid types to which the converter can be interfaced). For $D \gg {D}_{{\tx{min}}}$ conditions \eqref{eqs:stability conditions COI} reduce to \eqref{eq:stability condition}. An interpretation of the trade-off between $\gamma$ and $D$ is that with large $D$ the timescale of \ac{coi} model approaches that of the converter. Thus, synchronization demands less angle damping due to more coherent frequency dynamics. 
\end{remark}
%----------------------------------------------------------------------------------
The proof of Theorem \ref{TH4} extends the proof of Theorem \ref{TH2}. By following analogous arguments, it is possible to extend Propositions \ref{prop:LAS} and \ref{prop: instability} and Theorem \ref{TH:AGAS} to \eqref{eqs:sys COI} under condition \eqref{eqs:stability conditions COI}. For brevity of presentation, we omit the straightforward albeit lengthy calculations.
%--------------------------------------------------------------------------------------
%--------------------------------------------------------------------------------------
\subsection{Compatible Current-Limiting Control Synthesis}
In practice, power converters have tight state constraints for safety: in particular, their filter current magnitude needs to be constrained to a prescribed limit for protecting the semiconductor switches. 
There are ad hoc current-limiting techniques (without theoretical certificates e.g., see \cite{sadeghkhani2016current}) for \emph{grid-following} converters (see \cite{MDHHV18} for a definition). 
The design of current limitation strategies for grid-forming (i.e., voltage source) converters is an active research topic \cite{TGAKD20,GD19,TWDF20,PD15}. To this date, a universally satisfactory solution that safeguards the converter against various contingencies (e.g., load-induced over-current and balanced / unbalanced faults) is not known. 
In what follows, we propose a new current-limiting control and highlight its compatibility with the \ac{hac}. 

To begin with, by viewing the current dynamics in isolation, we derive the magnitude dynamics associated with \eqref{eqs:sys3} by means of polar coordinates transformation (see Appendix \ref{app:Polar} for details). First, expand the current dynamics in \eqref{eqs:sys3}:
\begin{subequations}\label{eqs:i_dq dot}
\begin{align}
\ell\dot{i}_\tx{d}&=\mu_\tx{r} v_\tx{dc}\cos(\theta)-ri_\tx{d}+\ell\omega_0i_\tx{q}-v_\tx{d},
\\
\ell\dot{i}_\tx{q}&=\mu_\tx{r} v_\tx{dc}\sin(\theta)-ri_\tx{q}-\ell\omega_0i_\tx{d}-v_\tx{q}.
\end{align}
\end{subequations}
Consider that $\n{i}=\sqrt{i_\tx{d}^2+i_\tx{q}^2}$ and thus
\begin{equation}\label{eq:scalar ODE00}
\dt{\n{i}^2}=2\n{i}\dt{\n{i}}=2\big(i_\tx{d}\dot{i}_\tx{d}+i_\tx{q}\dot{i}_\tx{q}\big).
\end{equation}
Replace $i_\tx{d}$ and $i_\tx{q}$ with $\n{i}\cos(\theta_i)$ and $\n{i}\sin(\theta_i)$ where $\theta_i \coloneqq \tan^{-1}(i_\tx{q}/i_\tx{d})$ and multiply \eqref{eq:scalar ODE00} with $\ell/2\n{i}$:
\begin{equation}\label{eq:scalar ODE0}
\ell\dt{\n{i}}=\ell\cos(\theta_i)\dot{i}_\tx{d}+\ell\sin(\theta_i)\dot{i}_\tx{q}.
\end{equation}
Next, substitute $v_\tx{d}$ and $v_\tx{q}$ in \eqref{eqs:i_dq dot} with $\n{v}\cos(\theta_v)$ and $\n{v}\sin(\theta_v)$ where $\theta_v \coloneqq \tan^{-1}(v_\tx{q}/v_\tx{d})$. 
Replacing $\ell\dot{i}_\tx{d}$ and $\ell\dot{i}_\tx{q}$ in \eqref{eq:scalar ODE0} with the \ac{rhs} of \eqref{eqs:i_dq dot} and exploiting \eqref{id: cosine angle sum} results in
\begin{equation}\label{eq:scalar ODE}
\ell\dt{\n{i}}=\mu_\tx{r} v_\tx{dc}\cos(\theta-\theta_i)-r\n{i}-\n{v}\cos(\theta_v-\theta_i)\,.
\end{equation}
So far $\mu_\tx{r}$ in \eqref{eq:scalar ODE} was assumed to be a positive constant; see the definition of $m_{\alpha\beta}(\mu_\tx{r},\theta_\tx{c})$ in \eqref{eqs:converter}. We  now re-parametrize the to-be-controlled modulation magnitude as
\begin{equation}\label{eq:mu}
\mu \coloneqq \big(1-\Delta_\mu\big)\mu_\tx{r},
\end{equation}
where $\Delta_\mu:\mathbb{X}\rightarrow\mathbb{R}_{(0,1)}$ is a locally Lipschitz function specified later. Replacing $\mu_\tx{r}$ in \eqref{eq:scalar ODE} with \eqref{eq:mu} results in
\begin{equation}\label{eq:scalar ODE2}
\ell \dt{\n{i}}=\mu_\tx{r} v_\tx{dc}\cos(\theta-\theta_i) \big(1-\mathcal{D}-\Delta_\mu\big)-r\n{i},
\end{equation}
where 
\begin{equation}\label{eq:D}
\mathcal{D} \coloneqq \dfrac{\n{v}\cos(\theta_v-\theta_i)}{\mu_\tx{r}v_\tx{dc}\cos(\theta-\theta_i)}
\end{equation}
takes value in $\mathbb{R}_{(0,1)}$ under normal operation and can be constructed from current and voltage measurements (the cosines of the angle differences in \eqref{eq:D} can be computed with similar techniques as in the Section \ref{sec:implementation}).  

In the sequel, we treat $\mc{D}$ as a fictitious disturbance -- capturing the influence of other states -- in magnitude dynamics \eqref{eq:scalar ODE2}. Consider a  threshold current $i_\tx{th}\in\mathbb{R}_{>0}$ that $\n{i}$ should not exceed. We aim to design a $\Delta_\mu$ such that 1) the \ac{rhs} of \eqref{eq:scalar ODE2} is strictly negative for all $\n{i}>i_\tx{th}$ and 2) ideally (i.e., not necessarily) $\Delta_\mu= 0$ for $\n{i}\leq i_\tx{th}$. The design of $\Delta_\mu$ in Proposition \ref{prop:current limitation} is inspired by ideas from feedback linearization \cite[Chap. 13]{khalil_nonlinear_2002} and disturbance decoupling techniques. Moreover, our design follows the intuition that for limiting the ac current, the dc power (i.e., $p_\tx{net}$ in Figure \ref{fig:SCIBsys}) must be constrained by controlling the modulation magnitude. 
\begin{proposition}\textup{{(Bivariate current-limiting control)}}\label{prop:current limitation}\\
%--------------------------------------------------------------------	
Consider the current magnitude dynamics \eqref{eq:scalar ODE2} and assume that $|\theta-\theta_i|<\pi/2$ and $\mc{D}\in\mathbb{R}_{(0,1)}$. Define
\begin{equation}\label{eq:delta_mu}
\Delta_\mu \coloneqq \dfrac{\left(1-\mc{D}\right)e^{\beta(\n{i}-i_\tx{th})}}{1+\left(1-\mc{D}\right)\left(e^{\beta(\n{i}-i_\tx{th})}-1\right)}\,,
\end{equation}
with $\beta\in\mathbb{R}_{>0}$, then $\n{i}$ is strictly decreasing for $\n{i}>i_\tx{th}$. 
\end{proposition}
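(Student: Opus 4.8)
The plan is to show that under the stated hypotheses the bracketed factor $\bigl(1 - \mathcal{D} - \Delta_\mu\bigr)$ in \eqref{eq:scalar ODE2} becomes nonpositive (in fact strictly negative) whenever $\n{i} > i_\tx{th}$, so that the right-hand side of \eqref{eq:scalar ODE2} reduces to the strictly negative term $-r\n{i}$ plus a nonpositive contribution. First I would simply substitute the definition \eqref{eq:delta_mu} of $\Delta_\mu$ into $1 - \mathcal{D} - \Delta_\mu$ and clear the (positive) common denominator $1 + (1-\mathcal{D})\bigl(e^{\beta(\n{i}-i_\tx{th})} - 1\bigr)$; note this denominator is positive because $\mathcal{D}\in\mathbb{R}_{(0,1)}$ implies $1-\mathcal{D}\in(0,1)$ and $e^{\beta(\n{i}-i_\tx{th})}-1 > -1$, so the product exceeds $-1$. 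A short algebraic simplification should collapse the numerator of $1 - \mathcal{D} - \Delta_\mu$ to something like $(1-\mathcal{D})\bigl(1 - e^{\beta(\n{i}-i_\tx{th})}\bigr)$ divided by that positive denominator, i.e.
\begin{equation*}
1 - \mathcal{D} - \Delta_\mu = \frac{(1-\mathcal{D})\bigl(1 - e^{\beta(\n{i}-i_\tx{th})}\bigr)}{1+(1-\mathcal{D})\bigl(e^{\beta(\n{i}-i_\tx{th})}-1\bigr)}\,.
\end{equation*}

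Next I would read off the sign. Since $\beta > 0$, for $\n{i} > i_\tx{th}$ we have $e^{\beta(\n{i}-i_\tx{th})} > 1$, hence the factor $1 - e^{\beta(\n{i}-i_\tx{th})}$ is strictly negative, while $1-\mathcal{D} > 0$ and the denominator is strictly positive; therefore $1 - \mathcal{D} - \Delta_\mu < 0$ for all $\n{i} > i_\tx{th}$. (As a sanity check consistent with the design goal, at $\n{i} = i_\tx{th}$ the exponential equals $1$, so $\Delta_\mu = 1-\mathcal{D}$ and $1-\mathcal{D}-\Delta_\mu = 0$; also one should verify $\Delta_\mu\in\mathbb{R}_{(0,1)}$ so that the re-parametrization \eqref{eq:mu} is well posed, which again follows from $1-\mathcal{D}\in(0,1)$ and positivity of numerator and denominator.) Then, using the assumption $|\theta - \theta_i| < \pi/2$, we have $\cos(\theta - \theta_i) > 0$, and with $\mu_\tx{r}, v_\tx{dc} > 0$ the product $\mu_\tx{r} v_\tx{dc}\cos(\theta-\theta_i)\bigl(1-\mathcal{D}-\Delta_\mu\bigr)$ is strictly negative; adding the strictly negative $-r\n{i}$ gives $\ell \,\dt{\n{i}} < 0$, so $\n{i}$ is strictly decreasing for $\n{i} > i_\tx{th}$.

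I do not expect a genuine obstacle here — the argument is essentially a one-line sign computation once the algebra is arranged. The only points requiring a little care are: (i) confirming the denominator in \eqref{eq:delta_mu} never vanishes and stays positive on the relevant domain, which is where the hypothesis $\mathcal{D}\in\mathbb{R}_{(0,1)}$ is used; (ii) making sure the simplification of $1-\mathcal{D}-\Delta_\mu$ is carried out correctly (it is the step most prone to an arithmetic slip); and (iii) invoking $|\theta-\theta_i|<\pi/2$ precisely where the sign of $\cos(\theta-\theta_i)$ matters, and noting that $v_\tx{dc}>0$ is maintained along trajectories (this is implicit in treating \eqref{eq:scalar ODE2} under normal operation, and is consistent with the closed-loop analysis of Section~\ref{sec:closed-loop analysis}). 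If one wanted to be thorough, one could also remark that the construction makes $\Delta_\mu$ locally Lipschitz in $\n{i}$ (as required after \eqref{eq:mu}), since it is a smooth function of $\n{i}$ and $\mathcal{D}$ with non-vanishing denominator.
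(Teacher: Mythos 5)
Your argument is correct and reaches the right conclusion, but it takes a different route from the paper. You compute the sign of the bracketed factor $1-\mc{D}-\Delta_\mu$ directly by algebraic simplification over the common denominator; the paper instead sets $\mc{C}\coloneqq 1-\mc{D}$, observes $\Delta_\mu|_{\n{i}=i_\tx{th}}=\mc{C}$, shows $\partial\Delta_\mu/\partial\n{i}>0$ and $\partial\Delta_\mu/\partial\mc{C}>0$ together with the limits $\Delta_\mu\to 0$ and $\Delta_\mu\to 1$, and concludes $\mc{C}<\Delta_\mu<1$ for $\n{i}>i_\tx{th}$ before invoking Nagumo's theorem. Your direct computation is more elementary and immediately gives the sign; the paper's monotonicity route additionally establishes $\Delta_\mu\in(0,1)$ (needed so that $\mu>0$ in \eqref{eq:mu}) and exposes the qualitative shape of $\Delta_\mu$ used in the subsequent remarks, which you only touch on as a sanity check.

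One arithmetic slip, which you anticipated but should fix: with $\mc{C}=1-\mc{D}$ and $E\coloneqq e^{\beta(\n{i}-i_\tx{th})}$ the correct simplification is
\begin{equation*}
1-\mc{D}-\Delta_\mu=\mc{C}-\dfrac{\mc{C}E}{1+\mc{C}(E-1)}=\dfrac{\mc{C}(1-\mc{C})(1-E)}{1+\mc{C}(E-1)},
\end{equation*}
i.e.\ your displayed numerator is missing the factor $1-\mc{C}=\mc{D}$. Since $\mc{D}\in(0,1)$ this extra factor is strictly positive and does not alter the sign, so your conclusion $1-\mc{D}-\Delta_\mu<0$ for $\n{i}>i_\tx{th}$ (and $=0$ at $\n{i}=i_\tx{th}$) stands, and the remainder of your argument --- positivity of the denominator from $\mc{D}\in\mathbb{R}_{(0,1)}$, $\cos(\theta-\theta_i)>0$ from $|\theta-\theta_i|<\pi/2$, and $\mu_\tx{r},v_\tx{dc}>0$ --- correctly yields $\ell\,\dt{\n{i}}<0$ whenever $\n{i}>i_\tx{th}$.
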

%-------------------------------------------------------------------
\begin{proof}
Define $\mc{C} \coloneqq 1-\mc{D}$ and observe that 
\begin{equation*}
\underset{\substack{(\n{i},\mc{C})\to\left(i_\tx{th}^+,0^+\right)}}{\lim}\Delta_\mu=0\quad\tx{and}\quad \underset{\substack{(\n{i},\mc{C})\to\left(+\infty,1^-\right)}}{\lim}\Delta_\mu=1\,.
\end{equation*}
Moreover, $\Delta_\mu$ is strictly increasing in both $\n{i}$ and $\mc{C}$ i.e.,
\begin{subequations}\label{eqs:partials of Delta_mu}
\begin{align}
\dfrac{\partial\Delta_\mu}{\partial\n{i}}&=\dfrac{\beta\mc{C}(1-\mc{C})e^{\beta(\n{i}-i_\tx{th})}}{\left(1+\mc{C}\left(e^{\beta(\n{i}-i_\tx{th})}-1\right)\right)^2}>0\,,
\\
\dfrac{\partial\Delta_\mu}{\partial\mc{C}}&=\dfrac{e^{\beta(\n{i}-i_\tx{th})}}{\left(1+\mc{C}\left(e^{\beta(\n{i}-i_\tx{th})}-1\right)\right)^2}>0\,.
\end{align}
\end{subequations}
Since $\Delta_\mu$ is strictly monotone and continuous with finite limits, then it is bounded by its left and right limits i.e., $\Delta_\mu\in(0,1)$ for all $\mc{C}\in(0,1)$ and $\n{i}>i_\tx{th}$. Further, since $\Delta_{\mu}|_{\n{i}=i_\tx{th}}=\mc{C}$ and \eqref{eqs:partials of Delta_mu} hold then $\mc{C}<\Delta_\mu<1$ for any $\mc{C}$ and $\n{i}>i_\tx{th}$. Thus, by the assumptions on $\theta-\theta_i$, and with $\Delta_\mu$ as in \eqref{eq:delta_mu}, the \ac{rhs} of \eqref{eq:scalar ODE2} is strictly negative for $\n{i}>i_\tx{th}$. Nagumo's theorem \cite[Th. 3.1]{B99} yields that $\n{i(t)}$ is strictly decreasing whenever $\n{i(t)}>i_\tx{th}$.  
\end{proof}
%------------------------------------------------------------------------------------------------------------------
Observe that $\Delta_\mu\in\mathbb{R}_{(0,1)}$ is required since $\mu$ in \eqref{eq:mu} has to be positive in practice. Subsequently, the assumption that $\mc{D}\in\mathbb{R}_{(0,1)}$ is vital; otherwise, the current magnitude in \eqref{eq:scalar ODE2} cannot be bounded by controlling $\mu$ via bounded $\Delta_\mu$. 
%-------------------------------------------------------------------
\begin{figure}[b!]
	\centering
	\includegraphics[width=1.02\columnwidth]{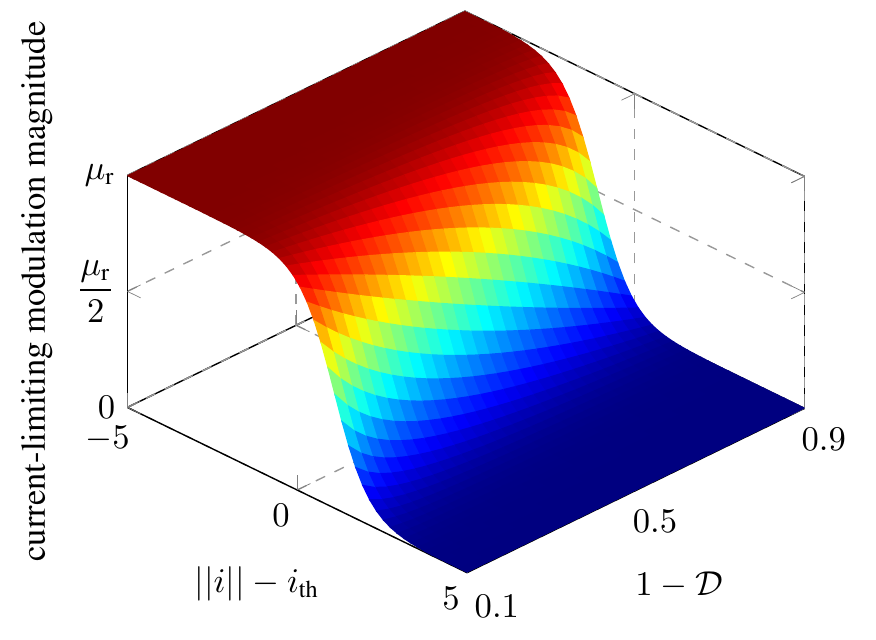}
	\caption{Illustration of $\mu$ in  \eqref{eq:mu} with $\Delta_\mu$ in \eqref{eq:delta_mu}. For clarity of presentation, here $\beta=2$ and $\mc{D}\in\mathbb{R}_{(0.1,0.9)}$. For small $\mc{D}$ (i.e., a severe contingency) $\Delta_\mu$ initiates the modulation magnitude decay at a lower current compared to a scenario with large $\mc{D}$. \label{fig:Delta}}
\end{figure}
%-------------------------------------------------------------------------------
\begin{remark}\textup{(Comments on the current-limiting control)}\\
The following comments are in order:

First, the bivariate function $\Delta_\mu$ in \eqref{eq:delta_mu} should be understood as  a barrier-type function that reduces $\mu$ in \eqref{eq:mu} and thus the switching voltage magnitude $\n{v_\tx{s}}$ (see Figure \ref{fig:SCIBsys}) when $\n{i}>i_\tx{th}$; observe the influence of $\Delta_\mu$ on $\mu$ in Figure \ref{fig:Delta}. The maximum reduction of $\mu$ occurs as $\mc{D}\to 0$  corresponding to a severe contingency e.g., a fault at the filter capacitance node driving $\n{v}\to 0$ (see Subsection \ref{subsec:current-limiting example} for an example). 

Second, the parameter $\beta$ controls the curvature and the exponential decay rate of $\mu$ in $\n{i}$. Note that ${\Delta_\mu}\to0$ as $\beta\to+\infty$ for $\n{i}<i_\tx{th}$ and for any $\mc{D}$, however large $\beta$ results in an aggressive controller resembling a sign function. 

Third, practical implementation of $\mathcal{D}$ in \eqref{eq:D} can be prone to measurement imperfections. Therefore, if $\mc{D}$ admits a non-zero lower bound i.e., $0<\mc{D}_{{\tx{min}}}<\mc{D}$, it allows to implement a variant of \eqref{eq:delta_mu} that is independent of $\mc{D}$. Such disturbance-free implementation of $\Delta_\mu$ is obtained by replacing $\mathcal{D}$ with $\mc{D}_{{\tx{min}}}$ in \eqref{eq:delta_mu}. In practice, $\mc{D}_\tx{min}$ is chosen by estimating $\mc{D}$ for the worst-case scenario. 

Last, the result of the Proposition \ref{prop:current limitation} can be improved when augmenting $\mc{D}$ with the dissipation term $r\n{i}$ in \eqref{eq:scalar ODE2}. However, in practice $r\n{i}$ is negligible compared to the denominator of \eqref{eq:D} (because of insignificant resistance e.g., see Table \ref{tab:parameters}).   
\end{remark}
The assumptions in Proposition \ref{prop:current limitation} (i.e., bounded angle and disturbance) are standard in small-signal / input-to-state stability and protection design. Moreover, the bound on the disturbance feasible set i.e., $\mc{D}\in\mathbb{R}_{(0,1)}$ can actually be extended to $\mc{D}\in\mathbb{R}_{(0,2)}$. To make this idea clear for now assume that $\mu=\mu_\tx{r}$, then multiplying the nominator and denominator \eqref{eq:D} with $\n{i}$ gives $\mc{D}=p_\tx{f}/p_\tx{s}$ (see Figure \ref{fig:SCIBsys}). Thus, $\mc{D}>1$ equals $p_\tx{f}>p_\tx{s}$ corresponding to the -- less likely -- scenario in which converter absorbs power from grid, e.g., after loss of load. In this case, replacing $1-\mc{D}$ with $|1-\mc{D}|$ in \eqref{eq:delta_mu} (while preserving the local Lipschitz continuity of $\Delta_\mu$) guarantees that the \ac{rhs} of \eqref{eq:scalar ODE2} is negative for $\mc{D}\in\mathbb{R}_{(0,2)}$. % and $\n{i}>i_\tx{th}$.% 
%\end{remark}

It remains to be shown that tampering with the current magnitude in \eqref{eq:scalar ODE2} does not jeopardize the overall system stability.
Proposition \ref{prop:hac and current limitation} gives an affirmative answer: under \eqref{eq:stability condition} and with current-limiting control \eqref{eq:mu} the desired closed-loop equilibrium of \eqref{eqs:sys} remains locally asymptotically stable. 
\begin{proposition}\textup{{(\ac{hac} and current-limiting control)}}\label{prop:hac and current limitation}\\
Consider the closed-loop system \eqref{eqs:sys} where $\mu_\tx{r}$ is replaced by the bounded $\mu$ in \eqref{eq:mu}. Assume that the modified dynamics admits equilibria  of the form \eqref{eq:EqSet} and condition \eqref{eq:stability condition} holds. Then the equilibrium $x^\star_\tx{s}$ is locally asymptotically stable.
\end{proposition}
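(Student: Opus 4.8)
The plan is to keep the composite Lyapunov function $\mc{V}$ from \eqref{eq:LF} and argue that it remains a strict Lyapunov function for the modified closed loop in a neighborhood of $x^\star_\tx{s}$. First I would note that the current limitation enters \eqref{eqs:sys} only through the modulation vector $m=\mu\psi(\theta)$ in \eqref{eqs:sys2}--\eqref{eqs:sys3}; that $x\mapsto\mu(x)$ is locally Lipschitz, in fact $C^1$ near $x^\star_\tx{s}$, because under normal operation $\n{i^\star}<i_\tx{th}$, $\mc{D}^\star\in\mathbb{R}_{(0,1)}$ and $|\theta^\star-\theta_i^\star|<\pi/2$, so the denominators in \eqref{eq:D} and \eqref{eq:delta_mu} stay bounded away from zero; and that $\mu^\star\coloneqq\mu(x^\star_\tx{s})=(1-\Delta_\mu(x^\star_\tx{s}))\mu_\tx{r}\leq\mu_\tx{r}$. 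Splitting $\mu(x)=\mu^\star+(\mu(x)-\mu^\star)$ decomposes the modified vector field into the original system \eqref{eqs:sys} with the constant $\mu_\tx{r}$ replaced by the constant $\mu^\star$ --- the ``frozen'' system, whose equilibrium is still $x^\star_\tx{s}$ by the hypothesis that the modified dynamics admits an equilibrium of the form \eqref{eq:EqSet} --- plus a state-dependent perturbation on the $\dot{v}_\tx{dc}$ and $\dot{i}$ channels that is proportional to $\mu(x)-\mu^\star$ and vanishes at $x^\star_\tx{s}$.

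For the frozen system the proof of Theorem \ref{TH2} applies verbatim: since the left-hand side of \eqref{eq:stability condition} is nondecreasing in the modulation magnitude, the condition --- read with the equilibrium values of the modified system --- continues to hold with $\mu^\star\leq\mu_\tx{r}$ in place of $\mu_\tx{r}$, so $\dot{\mc{V}}\leq-\tilde{\zeta}^\top\m{Q}\tilde{\zeta}$ with $\m{Q}\succ0$ as in \eqref{eq:Vdot bound}, which near $x^\star_\tx{s}$ gives $\dot{\mc{V}}\leq-c\n{\tilde{x}}^2$ for some $c>0$ (using that $\sin(\tilde{\theta}/2)$ is comparable to $\tilde{\theta}/2$). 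Differentiating $\mc{V}$ along the full modified dynamics adds to this the terms generated by $\mu(x)-\mu^\star$. Exploiting the power-preserving structure $p_\tx{net}=p_\tx{s}$ exactly as in Theorem \ref{TH2} --- so that the cross terms $-\tilde{v}_\tx{dc}\,m^\top\tilde{i}$ and $+\tilde{i}^\top m\,\tilde{v}_\tx{dc}$ cancel for any value of $\mu$ --- these extra terms collapse to the single scalar $(\mu(x)-\mu^\star)\,\psi(\theta)^\top(v_\tx{dc}^\star\tilde{i}-i^\star\tilde{v}_\tx{dc})$, which vanishes at $x^\star_\tx{s}$ and, by local Lipschitz continuity of $\mu$ and $\n{\psi(\theta)}=1$, is $O(\n{\tilde{x}}^2)$.

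The main obstacle is the final step: dominating this $O(\n{\tilde{x}}^2)$ perturbation by the frozen-system term $-c\n{\tilde{x}}^2$. This is precisely where the barrier structure of \eqref{eq:delta_mu} is used --- near $x^\star_\tx{s}$ one has $\n{i^\star}<i_\tx{th}$, hence $\Delta_\mu$ and $\nabla\Delta_\mu$ are there of order $e^{\beta(\n{i^\star}-i_\tx{th})}$, so the local Lipschitz constant of $\mu$ is small and the perturbation is negligible against the margin inherited from the strict inequality \eqref{eq:stability condition}; it suffices that this constant be below the threshold set by $c$. Once $\dot{\mc{V}}(\tilde{x})<0$ is secured for all $\tilde{x}\neq0_9$ in a sufficiently small punctured neighborhood, then --- together with $\mc{V}(0_9)=0$, $\mc{V}(\tilde{x})>0$ elsewhere, and the disjointness of the equilibria in \eqref{eq:EqSet} (so that a small enough sublevel set of $\mc{V}$ contains only $x^\star_\tx{s}$) --- Lyapunov's direct method \cite[Th. 3.1]{khalil_nonlinear_2002} yields local asymptotic stability of $x^\star_\tx{s}$. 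An equivalent route is to linearize: the frozen system has a Hurwitz Jacobian at $x^\star_\tx{s}$ (exponential stability from the strict Lyapunov function), and exponential stability persists under the small $C^1$ perturbation induced by $\nabla\mu(x^\star_\tx{s})$.
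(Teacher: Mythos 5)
Your proposal follows the same basic route as the paper's proof: reuse the Lyapunov function \eqref{eq:LF}, observe that every destabilizing term in the bound \eqref{eq:Vdot bound} is nondecreasing in the modulation magnitude so that $\mu\le\mu_\tx{r}$ can only improve the estimate, and close with Lyapunov's direct method exactly as in Proposition \ref{prop:LAS}. Where you genuinely diverge is in how you treat the state dependence of $\mu$. The paper simply re-derives \eqref{eq:Vdot bound} with the instantaneous $\mu$ in place of $\mu_\tx{r}$ (yielding \eqref{eq:Vdot bound with mu}) and invokes $\mu<\mu_\tx{r}$; it never isolates the term generated by $\mu(x)-\mu^\star$ with $\mu^\star\coloneqq\mu(x^\star_\tx{s})$. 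Your frozen-system decomposition makes that term explicit, and your computation is correct: after the power-preserving cancellation the residual in $\dot{\mc{V}}$ is exactly $(\mu(x)-\mu^\star)\,\psi(\theta)^\top\big(v_\tx{dc}^\star\tilde{i}-i^\star\tilde{v}_\tx{dc}\big)$, the quadratic cross terms in $\tilde{v}_\tx{dc}\tilde{i}$ cancelling identically. This is more careful than the paper's own argument. The price is the final domination step: you must absorb an $O(L\n{\tilde{x}}^2)$ perturbation, with $L$ the local Lipschitz constant of $\mu$, into the margin $c$ of the frozen quadratic form, and your justification---that $\Delta_\mu$ and $\nabla\Delta_\mu$ are of order $e^{\beta(\n{i^\star}-i_\tx{th})}$ near $x^\star_\tx{s}$---turns this into a quantitative requirement on $\beta$ and $i_\tx{th}-\n{i^\star}$ rather than an unconditional fact; strictly it is an extra hypothesis beyond those stated in the proposition, and the same caveat applies to your linearization route, where Hurwitzness of the frozen Jacobian persists only for $\nabla\mu(x^\star_\tx{s})$ sufficiently small. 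Since the paper's proof silently drops this residual altogether, your version is, if anything, the more honest one; to make either argument airtight one would either state the smallness condition explicitly or modify $\Delta_\mu$ so that it vanishes identically on $\{\n{i}\le i_\tx{th}\}$, in which case $\mu\equiv\mu_\tx{r}$ in a neighborhood of $x^\star_\tx{s}$ and the local analysis reduces verbatim to Proposition \ref{prop:LAS}.
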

%----------------------------------------------------------------------------------------
\begin{proof}
By replicating the proof of Theorem \ref{TH1} it follows that
\begin{align}
\dot{\mathcal{V}}(\tilde{x})\leq&-\left(\lambda\gamma-\dfrac{1}{\epsilon_1^2}-\dfrac{(\mu v_\tx{dc}^\star)^2}{\epsilon_2^2}\right) \sin^2\big(\tilde{\theta}/2\big)-\dfrac{1}{\kappa}\tilde{i}^2_\tx{dc}
\nonumber
\\
&-\left(g_\tx{dc}-\left({\epsilon_1\mu \n{i^\star}}\right)^2\right)\tilde{v}^2_\tx{dc}-\left(r-\epsilon_2^2\right)\n{\tilde{i}}^2 
\nonumber
\\
&- g\n{\tilde{v}}^2-r_\tx{g}\n{\tilde{i}_\tx{g}}^2+\lambda\eta\tilde{v}_\tx{dc}\sin\big(\tilde{\theta}/2\big)\,.
\label{eq:Vdot bound with mu}
\end{align}
Since $\mu<\mu_\tx{r}$, the \ac{rhs} of \eqref{eq:Vdot bound with mu} is smaller than the bound in \eqref{eq:Vdot bound}, therefore $\dot{\mc{V}}(\tilde{x})\leq-\tilde{\zeta}^\top\m{Q}\tilde{\zeta}$ as in \eqref{eq:Vdot bound}. Note that $\mc{V}(0_9)=0$, $\mc{V}(\tilde{x})>0$ for all $\tilde{x}\neq0_9$. By the proof of Theorem \ref{TH2} if \eqref{eq:stability condition} holds, $\dot{\mc{V}}(\tilde{{x}})<0$ for all $\tilde{{x}}\neq0_9$ in a sufficiently small neighborhood of the origin. The local asymptotic stability of $x^\star_\tx{s}$ immediately follows from Lyapunov's direct method \cite[Th. 3.1]{khalil_nonlinear_2002} as in the proof of Proposition \ref{prop:LAS}.
\end{proof}
It is worth mentioning that the current-limiting control \eqref{eq:mu} is agnostic to the \ac{hac}. Hence, it is expected that \eqref{eq:mu} is practically compatible with different grid-forming controls in \cite{TGAKD20} (although possibly without any stability guarantees). The performance of control \eqref{eq:mu} for a three-phase-to-ground fault scenario is verified in the Subsection \ref{subsec:current-limiting example}.
%-----------------------------------------------------------------------------------------
\section{HAC Implementation and Droop Behavior}	\label{sec:implementation}
In what follows, we describe a practical \ac{hac} implementation, introduce a feedforward ac voltage and power control, and unmask the power-frequency droop behavior of \ac{hac}.
%-----------------------------------------------------------------------------------------------
\subsection{Implementation of \ac{hac} and Feedforward Control}
The formulation of \ac{hac} in \eqref{eq:omega_c} relies on the explicit relative angle reference and measurement that are not available in practice. Hence, we seek an alternative implementation based on the dc and ac voltage measurements, and given set-points. 

The dc term in \eqref{eq:omega_c} is constructed by measuring the dc voltage; see Figure \ref{fig:SCIBsys}. The \ac{ib} voltage is also measured and transformed to $\alpha\beta$-frame (see Appendix \ref{app:Polar} for details). Subsequently, an implicit \ac{ib} angle measurement is obtained i.e., $\psi(\theta_\tx{b})=v_{\tx{b,}\alpha\beta}/\n{v_{{\tx{b,}\alpha\beta}}}$. Note that $\psi(\theta_\tx{c})$ is internally available from the modulation vector $m$. Thus, by the means of \eqref{id: sine angle sum} and \eqref{id: cosine angle sum},  an implicit relative angle measurement $\psi(\theta)$ is derived 
\begin{equation}\label{eq:psitheta}
\psi(\theta)=
\begin{pmatrix}
\psi(\theta_\tx{c})^\top \psi(\theta_\tx{b})
,
\psi(\theta_\tx{c})^\top\m{J} \psi(\theta_\tx{b}) 
\end{pmatrix}.
\end{equation}  
Given a relative angle reference $\psi(\theta_\tx{r})$ and measurement \eqref{eq:psitheta}, Proposition \ref{prop:implementation} explains the derivation of angle feedback in \eqref{eq:omega_c}.
%--------------------------------------------------------------------------------------
\begin{figure*}[t!]
\centering	
{\includegraphics[trim=7.2mm 0mm 0mm 0mm,clip,width=0.9\textwidth]{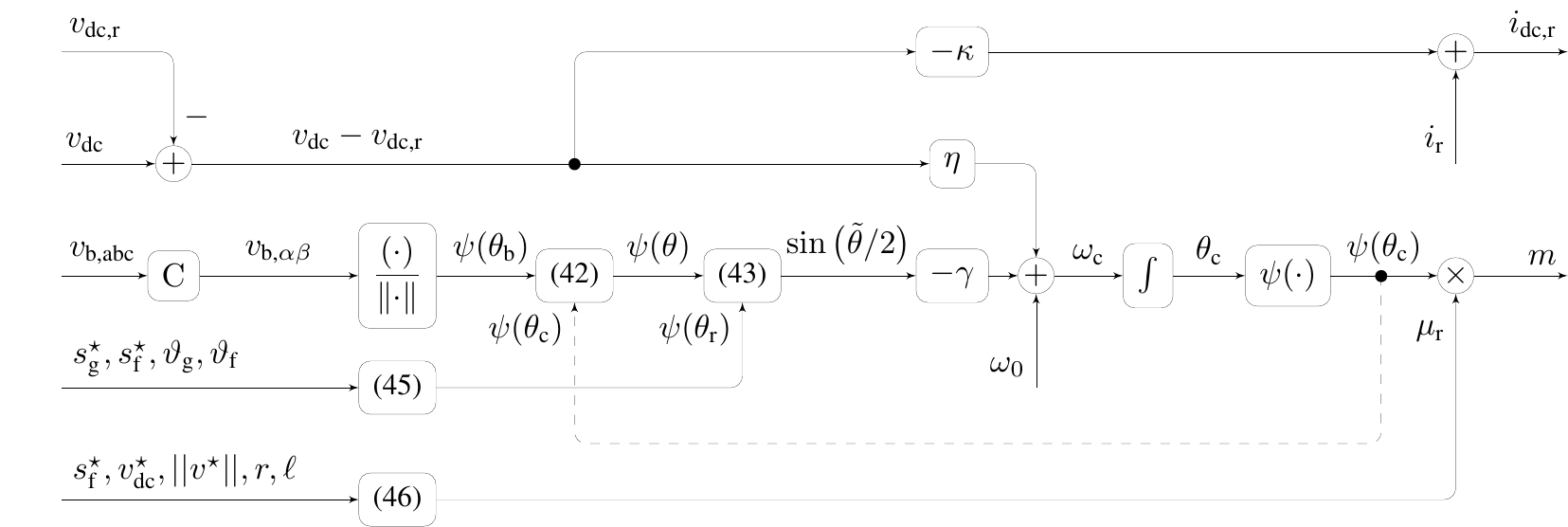}}
\caption{Block diagram of the feedback controls \eqref{eq:i_dc} and \eqref{eq:omega_c} with implementation \eqref{eq:impl}, in combination with the feedforward controls \eqref{eq:psi(theta_r)} and \eqref{eq:mu_r} for the closed-system \eqref{eqs:sys}.\label{fig:block diagram}}
%\vspace{-5mm}
\end{figure*}
%--------------------------------------------------------------------------------------
\begin{proposition}\textup{{(Angle feedback implementation)}}	\label{prop:implementation}		\\
	Given $\psi(\theta_\tx{r})$ and $\psi(\theta)$ derived by \eqref{eq:psitheta}, if $|\theta-\theta_\tx{r}|<\pi$ then
	\begin{equation}\label{eq:impl}
	\sin\left( \dfrac{ \theta - \theta_\tx{r} } { 2 } \right) = \dfrac {\psi(\theta_\tx{r})^{\top}\m{J} \psi(\theta)} 
	{\sqrt { 2 \big( 1 + \psi(\theta_\tx{r})^{\top} \psi(\theta) \big) }}\,.
	\end{equation}
\end{proposition}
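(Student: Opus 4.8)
The claimed identity \eqref{eq:impl} is a trigonometric half-angle relation, so the plan is to express both sides purely in terms of the angle difference $\delta \coloneqq \theta - \theta_\tx{r}$ and verify the equality algebraically, using the constraint $|\delta| < \pi$ to fix the sign of a square root. First I would compute the numerator: by definition of $\psi$ and $\m{J}$, one has $\psi(\theta_\tx{r})^\top \m{J}\, \psi(\theta) = \cos(\theta_\tx{r})\sin(\theta) - \sin(\theta_\tx{r})\cos(\theta) = \sin(\theta - \theta_\tx{r}) = \sin\delta$, invoking the sine angle-difference identity (the paper's \eqref{id: sine angle sum} / \eqref{id: cosine angle sum} machinery already used to derive \eqref{eq:psitheta}). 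Likewise the inner product in the denominator gives $\psi(\theta_\tx{r})^\top \psi(\theta) = \cos(\theta - \theta_\tx{r}) = \cos\delta$, so the right-hand side becomes $\sin\delta \,/\, \sqrt{2(1+\cos\delta)}$.

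Next I would simplify this using the half-angle identities $1 + \cos\delta = 2\cos^2(\delta/2)$ and $\sin\delta = 2\sin(\delta/2)\cos(\delta/2)$. The denominator is then $\sqrt{4\cos^2(\delta/2)} = 2\,|\cos(\delta/2)|$, and the right-hand side reduces to $\sin(\delta/2)\cos(\delta/2)\,/\,|\cos(\delta/2)|$. This equals $\sin(\delta/2)$ precisely when $\cos(\delta/2) > 0$, i.e. when $\delta/2 \in (-\pi/2, \pi/2)$, which is exactly the hypothesis $|\theta - \theta_\tx{r}| < \pi$ — this is where the assumption is used and is the only subtle point. I would also note that at $|\delta| = \pi$ the denominator vanishes, so the restriction is not merely cosmetic but genuinely needed for the expression to be well-defined and single-valued, consistent with the fact that $\theta$ lives on $\mathbb{M}$ (the half-angle $\sin(\delta/2)$ is $4\pi$-periodic in $\delta$, so a representative window of width $2\pi$ must be selected, and $|\delta| < \pi$ picks the branch through $\delta = 0$).

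The argument is essentially a one-line computation once the inner-product identities are in hand; there is no real obstacle, only the bookkeeping of the sign condition. If anything, the thing to be careful about is making explicit that the formula \eqref{eq:psitheta} for $\psi(\theta)$ — which was itself derived from the angle-sum identities — combined with the orthogonality structure of $\m{J}$ (namely $\psi(\varphi)^\top \m{J}\,\psi(\varphi) = 0$ and $\|\psi(\varphi)\| = 1$) is what licenses treating $\psi(\theta_\tx{r})^\top\psi(\theta)$ and $\psi(\theta_\tx{r})^\top\m{J}\psi(\theta)$ as the cosine and sine of the genuine angle difference; I would state this cleanly before the computation so the half-angle reduction is transparent.
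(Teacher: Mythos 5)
Your proof is correct and follows essentially the same route as the paper's: compute $\psi(\theta_\tx{r})^{\top}\m{J}\psi(\theta)=\sin(\theta-\theta_\tx{r})$ and $\psi(\theta_\tx{r})^{\top}\psi(\theta)=\cos(\theta-\theta_\tx{r})$ via the angle-sum identities, reduce the denominator to $2\left|\cos\big((\theta-\theta_\tx{r})/2\big)\right|$ by the half-angle formula, and use $|\theta-\theta_\tx{r}|<\pi$ to fix the sign. Your added remarks on the degeneracy at $|\theta-\theta_\tx{r}|=\pi$ and the $4\pi$-periodic branch selection are accurate but not needed beyond what the paper states.
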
	
%--------------------------------------------------------------------------------------
\begin{proof}
	Consider $\psi(\theta_\tx{r})^{\top} \m{J} \psi(\theta)$ and apply \eqref{id: sine angle sum}:
	\begin{align*}
	\psi(\theta_\tx{r})^{\top} \m{J} \psi(\theta) &=
	\sin(\theta) \cos(\theta_\tx{r}) - \sin(\theta_\tx{r}) \cos(\theta)
	\\
	&=\sin\big(\tilde{\theta}\big)= 2 \sin\big(\tilde{\theta}/2\big) \cos({\tilde{\theta}}/{2}),
	\end{align*}
	where $\tilde{\theta} \coloneqq \theta-\theta_\tx{r}$. Next, consider $\psi(\theta_\tx{r})^{\top} \psi(\theta)$ and apply \eqref{id: cosine angle sum}:
	\begin{equation*}
	\psi(\theta_\tx{r})^{\top} \psi(\theta) = \cos(\theta) \cos(\theta_\tx{r}) + \sin(\theta) \sin(\theta_\tx{r}) = \cos\big(\tilde{\theta}\big).
	\end{equation*}
	Subsequently, applying identity \eqref{id: cosine half angle} results in 
	\begin{equation*}
	\sqrt{ 2 \big( 1 + \psi(\theta_\tx{r})^{\top} \psi(\theta) \big) } = \sqrt{ 2 \big( 1 + \cos\big(\tilde{\theta}\big) \big) } = 2 \big| \cos\big(\tilde{\theta}/2\big) \big|.
	\end{equation*}
Hence, the \ac{rhs} of \eqref{eq:impl} is equal to	%
\begin{equation}
\dfrac { \cos \big( \tilde{\theta} / 2 \big) \sin \big( \tilde{\theta} / 2 \big) } { \big| \cos \big( \tilde{\theta} / 2 \big) \big| } = \mathrm{ sgn } \big( \cos \big( \tilde { \theta } / 2 \big) \big) \sin \big( \tilde { \theta } / 2 \big).
\label{eq:impl-rhs}
\end{equation}
Thus, if $| \tilde{\theta} | < \pi$ then $\mathrm{sgn} \big( \cos\big({\tilde{\theta}}/{2}\big) \big) = 1$ and \eqref{eq:impl} holds. %|\theta-\theta_\tx{r}|=
\end{proof}
%--------------------------------------------------------------------------------------
As well as $\psi{ ( \theta_\tx{r} ) }$ in Proposition \ref{prop:implementation}, the reference modulation magnitude $\mu_\tx{r}$ (see Figure \ref{fig:SCIBsys}) is not explicitly available in practice. Rather power references $p_\tx{g,r}$ and $q_\tx{g,r}$ (associated with $p_\tx{g}$ and $q_\tx{g}$ in Figure \ref{fig:SCIBsys}) are specified. In what follows, we describe consistent definitions for $\psi{ ( \theta_\tx{r} ) }$ and $\mu_\tx{r}$ that rely on the steady-state dc voltage, ac voltage magnitude, and power flows (see \eqref{eqs:p*_g,q*_g} and \cite[Def. 2]{CGBF19} for a definition). 
%---------------------------------------------------------------------------------------
\begin{proposition}\textup{{(Consistent definition of $\psi(\theta_\tx{r})$ and $\mu_\text{r}$)}}	\label{prop:consistent references}\\
	Given the voltages $v^\star_\tx{dc}$, $\n{v^\star}$, and the references $p_\tx{g,r}$ and $q_\tx{g,r}$ consistent with the equilibrium values i.e., $p_\tx{g,r}=p_\tx{g}^\star$ and $q_\tx{g,r}=q_\tx{g}^\star$, the consistent references are defined by  
	\begin{align}
	\psi(\theta_\tx{r}) &    \coloneqq    \m{R}(\delta)^\top 
	\Big( 
	\hat{s}_\tx{g}^{\star\top} \left(\begin{smallmatrix} +1 & 0 \\ 0 & -1 \end{smallmatrix}\right) \hat{s}^\star_\tx{f}
	, 
	\hat{s}_\tx{g}^{\star\top} \left(\begin{smallmatrix} 0 & +1 \\+1 & 0 \end{smallmatrix}\right) \hat{s}^\star_\tx{f} 
	\Big),
	\label{eq:psi(theta_r)}
	\\	
	\mu_\tx{r}  &    \coloneqq   { \sqrt{\big(p^{\star 2}_\tx{f}+q^{\star 2}_\tx{f}\big) \big(r^{2} + ( \ell \omega_0 )^{2}\big) }}\Big/{{ v^\star_\tx{dc}\n{v^\star} }},
	\label{eq:mu_r}
	\end{align}
	with $\delta \coloneqq \tan^{-1}(\ell_\tx{g}\omega_0/r_\tx{g})+\tan^{-1}(\ell\omega_0/r)$, $s^\star_\tx{g}   \coloneqq   (p^\star_\tx{g},q^\star_\tx{g})$ and $\hat{s}^\star_\tx{g}   \coloneqq   s^\star_\tx{g}/\n{s^\star_\tx{g}}$, $s^\star_\tx{f}   \coloneqq   (p^\star_\tx{f},q^\star_\tx{f})$ and $\hat{s}^\star_\tx{f}   \coloneqq   s^\star_\tx{f}/\n{s^\star_\tx{f}}$. %(the latter is obtained from the equilibrium values). 
\end{proposition}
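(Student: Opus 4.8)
The plan is to treat this as a power-flow (load-flow) computation for the converter equilibrium and to simply read $\mu_\tx{r}$ and $\psi(\theta_\tx{r})$ off the algebraic equilibrium relations. By Theorem~\ref{TH1}, under the consistency hypothesis the equilibrium satisfies $v_\tx{dc}^\star=v_\tx{dc,r}$ and $\theta^\star=\theta_\tx{r}$, so the switching-node voltage at equilibrium is $v_\tx{s}^\star=v_\tx{dc}^\star\mu_\tx{r}\psi(\theta_\tx{r})$. Hence $\mu_\tx{r}=\n{v_\tx{s}^\star}/v_\tx{dc}^\star$ and $\psi(\theta_\tx{r})$ is the direction of $v_\tx{s}^\star$ relative to the real-axis infinite-bus phasor $v_\tx{b}=(v_\tx{r},0)$, and it remains to express $v_\tx{s}^\star$ through the prescribed data $(v_\tx{dc}^\star,\n{v^\star},p_\tx{g,r},q_\tx{g,r})$.

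First I would recast the ac steady-state equations \eqref{eqs:sys_ss_i}--\eqref{eqs:sys_ss_ig} in complex-phasor form, identifying each dq two-vector with a complex number and $\m{J}$ with multiplication by $\pm j$ (the sign fixed by the orientation used to derive \eqref{eqs:sys}); then $\m{Z}$ and $\m{Z}_\tx{g}$ become the impedances $r+j\ell\omega_0$ and $r_\tx{g}+j\ell_\tx{g}\omega_0$, whose arguments $\tan^{-1}(\ell\omega_0/r)$ and $\tan^{-1}(\ell_\tx{g}\omega_0/r_\tx{g})$ sum to $\delta$. Proceeding from the bus inward: \eqref{eqs:sys_ss_ig} together with $p_\tx{g}^\star=i_\tx{g}^{\star\top}v_\tx{b}$, $q_\tx{g}^\star=i_\tx{g}^{\star\top}\m{J}v_\tx{b}$ (so that $\arg(p_\tx{g}^\star,q_\tx{g}^\star)=-\arg i_\tx{g}^\star$) reconstructs $i_\tx{g}^\star$ and then $v^\star$; the cap-node equation \eqref{eqs:sys_ss_v} reconstructs $i^\star$ --- this is the step where the shunt $\m{Y}$ enters and where the filter-node powers $p_\tx{f}^\star,q_\tx{f}^\star$ (as opposed to $p_\tx{g}^\star,q_\tx{g}^\star$) appear, via the relations \eqref{eqs:p*_g,q*_g}; finally the filter-inductor equation \eqref{eqs:sys_ss_i} gives $v_\tx{s}^\star=\m{Z}i^\star+v^\star$.

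For the \emph{magnitude}, I would substitute $\sqrt{p_\tx{f}^{\star 2}+q_\tx{f}^{\star 2}}=\n{v^\star}\n{i^\star}$ to eliminate $\n{i^\star}$ and $|\m{Z}|=\sqrt{r^{2}+(\ell\omega_0)^{2}}$, so that the modulus of the filter-branch voltage reduces to \eqref{eq:mu_r}. For the \emph{argument}, I would accumulate the phase advance from $v_\tx{b}$ to $v_\tx{s}^\star$: the two series impedances contribute $\tan^{-1}(\ell_\tx{g}\omega_0/r_\tx{g})+\tan^{-1}(\ell\omega_0/r)=\delta$, and the power references contribute $\arg(p_\tx{g}^\star,q_\tx{g}^\star)+\arg(p_\tx{f}^\star,q_\tx{f}^\star)$; using the angle-sum identities \eqref{id: sine angle sum}--\eqref{id: cosine angle sum}, this last sum equals $\psi\big(\arg(p_\tx{g}^\star,q_\tx{g}^\star)+\arg(p_\tx{f}^\star,q_\tx{f}^\star)\big)$ written precisely in the bilinear-in-$(\hat s_\tx{g}^\star,\hat s_\tx{f}^\star)$ form appearing inside \eqref{eq:psi(theta_r)}, and factoring out the rotation $\m{R}(\delta)^\top$ yields \eqref{eq:psi(theta_r)}. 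Choosing the branch $\theta^\star=\theta_\tx{r}$ (not $\theta_\tx{r}+2\pi$) selects $x_\tx{s}^\star$.

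The hard part will be the phasor bookkeeping rather than any conceptual obstacle: fixing the sign in the $\m{J}\leftrightarrow\pm j$ identification and the reference directions of $i,i_\tx{g}$ so that the two impedance arguments \emph{add} rather than cancel, and carefully handling the shunt admittance $\m{Y}$ (which makes $i^\star\neq i_\tx{g}^\star$) so that it is genuinely the filter-node power $(p_\tx{f}^\star,q_\tx{f}^\star)$ and the bus power $(p_\tx{g}^\star,q_\tx{g}^\star)$ --- and not the switching-node power $(p_\tx{s}^\star,q_\tx{s}^\star)$ --- that enter \eqref{eq:psi(theta_r)}--\eqref{eq:mu_r}. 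A useful closing check is that, under lossless conversion $p_\tx{net}^\star=p_\tx{s}^\star$, the dc-side relations \eqref{eqs:sys_ss_idc}--\eqref{eqs:sys_ss_vdc} are automatically consistent and pin down the reference $i_\tx{r}=i_\tx{dc}^\star$ required by Theorem~\ref{TH1}.
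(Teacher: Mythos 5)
Your route for the angle reference \eqref{eq:psi(theta_r)} is essentially the paper's: the paper writes the two-bus power-transfer relations for each branch, observes that $\hat{s}_\tx{g}^\star=-\psi(\theta_v^\star-\theta_\tx{b}^\star+\vartheta_\tx{g})$ and $\hat{s}_\tx{f}^\star=-\psi(\theta_\tx{c}^\star-\theta_v^\star+\vartheta)$ with $\vartheta_\tx{g}+\vartheta=\delta$, and then uses exactly the mechanism you describe --- the bilinear forms implement the angle-sum identities \eqref{id: sine angle sum}--\eqref{id: cosine angle sum} to produce $\psi(\theta_\tx{c}^\star-\theta_\tx{b}^\star+\delta)$, after which $\m{R}(\delta)^\top$ removes the accumulated impedance angle. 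The sign and orientation caveats you flag are the right ones, and that half of the plan closes.

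The magnitude step, however, has a genuine gap. From the definitions $p_\tx{f}=i^\top v$ and $q_\tx{f}=i^\top\m{J}v$, your substitution $\sqrt{p_\tx{f}^{\star 2}+q_\tx{f}^{\star 2}}=\n{i^\star}\,\n{v^\star}$ is an exact identity, and inserting it into \eqref{eq:mu_r} gives $\mu_\tx{r}=\n{i^\star}\sqrt{r^2+(\ell\omega_0)^2}\,/\,v_\tx{dc}^\star=\n{\m{Z}i^\star}/v_\tx{dc}^\star=\n{v_\tx{s}^\star-v^\star}/v_\tx{dc}^\star$, i.e., the normalized voltage \emph{drop across} the filter inductor --- not $\n{v_\tx{s}^\star}/v_\tx{dc}^\star=\mu^\star$, which is what your own opening reduction requires. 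Since $v_\tx{s}^\star=\m{Z}i^\star+v^\star$ and $v^\star$ is not aligned with $\m{Z}i^\star$ in general, $\n{\m{Z}i^\star}\neq\n{v_\tx{s}^\star}$, so ``eliminate $\n{i^\star}$ and conclude \eqref{eq:mu_r}'' does not land on the right quantity. The paper instead obtains \eqref{eq:mu_r} from its branch-power expressions \eqref{eqs:p*_f,q*_f}, which yield $\n{s_\tx{f}^\star}=\n{v_\tx{s}^\star}\n{v^\star}/\sqrt{r^2+(\ell\omega_0)^2}$ --- a numerator proportional to $\n{v_\tx{s}^\star}$ rather than to $\n{i^\star}$. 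Note that this relation and your exact identity differ precisely by a self-term proportional to $\n{v^\star}^2$, so they cannot both be invoked for the same quantity; to reach \eqref{eq:mu_r} as a formula for $\mu^\star$ you must go through the branch-power expressions (and justify them under the paper's conventions), not through the $\n{i^\star}\n{v^\star}$ identity.
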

%--------------------------------------------------------------------------------------
The proof is given in Appendix \ref{App:proofs}. Observe that the consistent reference specifications \eqref{eq:psi(theta_r)}-\eqref{eq:mu_r} can conceptually also be used as feedforward ac voltage and power control.
Figure \ref{fig:block diagram} represents the overall block diagram of the feedback controls \eqref{eq:i_dc}, \eqref{eq:omega_c}, and the feedforward controls \eqref{eq:psi(theta_r)} and \eqref{eq:mu_r}. 

If the assumption $|\theta-\theta_\tx{r}|<\pi$ in Proposition \ref{prop:implementation} is not met,  then according to \eqref{eq:impl-rhs} the \ac{rhs} of \eqref{eq:impl} equals a $2\pi$-periodic switching signal %(which will also enter in \eqref{eq:omega_c})
\begin{equation}\label{eq:u_sw}
u_\tx{sw} \coloneqq \mathrm{sgn} \big( \cos\big(\tilde{\theta}/2\big) \big) \sin\big(\tilde{\theta}/2\big),
\end{equation}
where $\tilde{\theta} \coloneqq \theta-\theta_\tx{r}$. Remark \ref{rem:switching} explains the implications of \eqref{eq:u_sw} for the closed-loop dynamics \eqref{eqs:sys}. 
\begin{figure}[b!]
\centering
{\includegraphics[trim=2mm 2mm 1mm 1mm,clip, width=0.85\columnwidth]{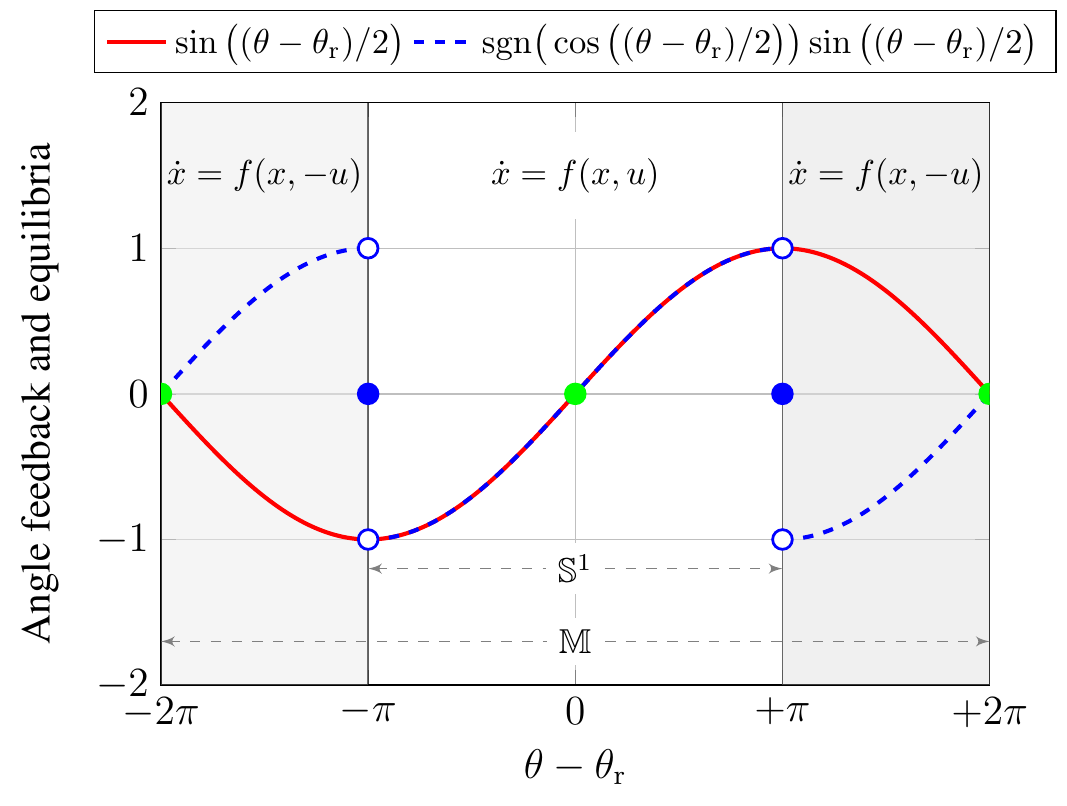}}
\caption{Illustration of the angle feedback in \eqref{eq:omega_c} and the implementation \eqref{eq:u_sw}. The implementation introduces two new equilibria (in blue) aside the existing	ones (in green) in $\mathbb{M}$.\label{fig:implementation}}
\end{figure}
%--------------------------------------------------------------------------------------
\begin{remark}\textup{{(Switching feedback and angle invariance)}}\label{rem:switching}\\
The following comments are in order:

First, the feedback $u_\tx{sw}$ in \eqref{eq:u_sw} is $2\pi$-periodic since the implicit angle information in \eqref{eq:impl} (that is embedded in $\psi(\theta_\tx{r})$ and $\psi(\theta)$) is confined to $\mathbb{S}^1$ due to $2\pi$-periodicity of $\psi(\cdot)$. 

Second, an exact implementation of $\sin\big(\tilde{\theta}/2\big)$ based on \eqref{id: sine half angle} with $\cos\big(\tilde{\theta}\big)=\psi(\theta_\tx{r})^{\top} \psi(\theta)$ (as in proof of Proposition \ref{prop:implementation}) can be obtained
\begin{equation*}
\sin\big({\tilde{\theta}}/{2}\big)   \coloneqq   (-1)^{\lfloor{\tilde{\theta}}/{(2\pi)}\rfloor}\big|\sin\big(\tilde{\theta}/2\big)\big|,%\sqrt{\dfrac{1-\cos\big(\tilde{\theta}\big)}{2}}\,.
\end{equation*}
that requires $4\pi$-periodic angle error information to construct the appropriate sign pattern. However, in practice the explicit measurement of $\tilde{ \theta }$ is not possible. 

Third, the sign function in \eqref{eq:u_sw} introduce new equilibria for the closed-loop system \eqref{eqs:sys} at $\tilde{\theta}=\pm \pi$ (see Figure \ref{fig:implementation}). A separate analysis reveals the unstable nature of these equilibria that divide $\mathbb{M}$ into two regions with switched subsystems i.e.,
	\begin{equation}\label{eqs:SwitchingSys}
	\dot{x}=\begin{cases}
	f(x,u)&x\in~]-\pi,\pi[~\times~\mathbb{R}^8,\\
	f(x,-u)&x\in~]-2\pi,-\pi[~\cup~]\pi,2\pi[~\times~\mathbb{R}^8,\\
	\end{cases}
	\end{equation}
where $f(x,u)$ denotes the vector field \eqref{eqs:sys} with $u \coloneqq \sin\big(\tilde{\theta}/2\big)$. An analysis  similar to Section \ref{sec:closed-loop analysis}, verifies the \ac{agas}  of the second subsystem i.e., $\dot{x}=f(x,-u)$, $x\in\mathbb{X}$ \ac{wrt} $x^\star_\tx{u}$ in \eqref{eq:EqSet}. Thus, if $x(t)$ hits the switching surface $\Omega_\tx{sw} \coloneqq \{x\in\mathbb{X}:|\tilde{\theta}|=\pi\}$ it triggers the switch between the subsystems in \eqref{eqs:SwitchingSys} that are individually \ac{agas}  \ac{wrt} either $x^\star_\tx{s}$ or $x^\star_\tx{u}$. The analysis of switched system \eqref{eqs:SwitchingSys} requires a separate study outside the scope of this paper. 

Finally, we note that	 if $\eta=0$ in \eqref{eq:omega_c}, then $\dot{\theta}=-\gamma u_\tx{sw}$ and  $\mathscr{L}_{\tilde{\theta}} \coloneqq \{\tilde{\theta}\in\mathbb{S}^1:|\tilde{\theta}|<\pi\}$ is invariant under the flow defined by $\dot{x}=f(x,u_\tx{sw})$. 	Thus, if $\tilde{ \theta }(0)\in\mathscr{L}_{\tilde{\theta}}$, $x(t)$ never reach $\Omega_\tx{sw}$. %In this case the resembles the angle dynamics of an inverted pendulum with a discontinuous feedback \cite{L03,FS14}.
\end{remark}
%--------------------------------------------------------------------------------------
Remark \ref{rem:atan} presents an alternative angle feedback that, in theory, can be incorporated in \ac{hac} \eqref{eq:omega_c}.
%--------------------------------------------------------------------------------------
\begin{remark}\textup{{(A globally stabilizing feedback control)}}\label{rem:atan}\\
In the spirit of hybrid angle control, an alternative to \eqref{eq:omega_c} is  
\begin{equation}\label{eq:u tan}
\omega_\tx{c} \coloneqq \omega_{0} + \eta ( v_\tx{dc}  - v_\tx{dc,r} ) - \gamma \tan^{-1}( \theta - \theta_\tx{r} ).
\end{equation} 
Under \eqref{eq:u tan} the closed-loop system \eqref{eqs:sys} is not periodic in angle and its solutions evolve in $\mathbb{R}^9$. If $v^\star_\tx{dc}=v_\tx{dc,r}$ (see Theorem \ref{TH1}) the unique angle equilibrium is identified by $\gamma\tan^{-1}\left({\theta^\star-\theta_\tx{r}}\right)=0$. Moreover, if $\mathcal{S}\big(\tilde{\theta}\big)$ in Lyapunov function \eqref{eq:LF} is replaced by
\begin{equation}
\mathcal{\ubar{S}}\big( \tilde{\theta} \big) = \int_{0}^{\tilde{\theta}} \tan^{-1}( s ) \tx{d}s = \tilde{\theta} \tan^{-1}\big( \tilde{\theta} \big) - \dfrac{ 1 }{ 2 } \ln \big( 1 + \tilde{\theta}^2 \big),
\end{equation}
a similar analysis as in the Theorem \ref{TH2} uncovers the \underline{global asymptotic stability} (rather than \ac{agas}) of the unique equilibrium on the basis of Lyapunov's direct method. Nonetheless, it is not clear how to construct $\tan^{-1} \big( \tilde{\theta} \big)$ from $\psi(\theta)$ and $\psi(\theta_\tx{r})$ in Propositions \ref{prop:implementation} and \ref{prop:consistent references}. 
\end{remark}
%----------------------------------------------------------------------------
\subsection{\ac{hac} Power-Frequency Droop Behavior}\label{subsec:droop}
In what follows, we consider the converter-\ac{coi} closed-loop dynamics \eqref{eqs:sys COI} that allows frequency droop which is hindered when considering the \ac{ib} grid model. Recall that the existence of equilibria $\ubar{\Omega}^\star$ in \eqref{eq:EqSetCOI} is guaranteed if $v_\tx{dc}^\star=v_\tx{dc,r}$ and $\omega^\star=\omega_0$. These assumptions can be met by appropriate choice of $i_\tx{r}$ and $T_\tx{m}$ in \eqref{eqs:sys COI2} and \eqref{eqs:sys COI6}. For the sake of argument, assume that $i_\tx{r}$ and $T_\tx{m}$ are not consistent with the assumptions or the system is subject to disturbances, then the dc voltage and frequency converge to different equilibria $v_\tx{dc,x}$ and $\omega_\tx{x}$. Hence, by \eqref{eqs:sys COI1} the relative angle settles at a different equilibrium $\theta_\tx{x}\neq\theta_\tx{r}$. The drift from desired references is also reflected in the ac quantities. 

Inspired by \cite[Prop. 5]{AJD18}, in Proposition \ref{prop:droop slope}  below we derive the droop slope that relates the active power and frequency at an arbitrary equilibrium. More precisely, for an operating frequency $\omega_\tx{x}\in\mathbb{R}_{>0}$, the power-frequency linear sensitivity factor (also termed {\em droop}) is defined by
$d_{p-\omega} \coloneqq {\partial p_\tx{net,x}}/{\partial\omega_\tx{x}}$;
see Figure \ref{fig:SCIBsys} and the Subsection \ref{subsec:SCIB} for the definition of $p_\tx{net,x}$. 
%--------------------------------------------------------------------------------------
\begin{proposition}\textup{{(Power--frequency droop slope)}}\label{prop:droop slope}\\
Consider system \eqref{eqs:sys COI}, the power-frequency droop slope at an equilibrium with frequency $\omega_\tx{x}$ and dc voltage $v_\tx{dc,x}$ equals
\begin{equation}\label{eq:droop slope}
d_{ p-\omega } = - \left( \dfrac{ 2 G_\tx{dc} } { \eta^2 } \right) \omega_\tx{x} + \left( \dfrac{ \eta i_0 + 2 G_\tx{dc}\beta_{ \theta_\tx{x} } } { \eta^2 } \right),
\end{equation} 
where $G_\tx{dc}   \coloneqq   \kappa + g_\tx{dc}$, $ i_0 \coloneqq i_\tx{r} + \kappa v_\tx{dc,r} $, $ \beta_{\theta_\tx{x}}   \coloneqq   \omega_0  - \eta v_\tx{dc,r}  - \gamma \sin\big( (\theta_\tx{x}  - \theta_\tx{r} ) / 2 \big) $, and $\theta_\tx{x}$ is equilibrium relative angle. 
\end{proposition}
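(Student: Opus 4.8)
The plan is to evaluate every state at an arbitrary equilibrium of \eqref{eqs:sys COI} and to express $p_\tx{net,x}=v_\tx{dc,x}\,i_\tx{net,x}$ as an explicit scalar function of $\omega_\tx{x}$, treating the equilibrium angle $\theta_\tx{x}$ (hence $\beta_{\theta_\tx{x}}$) as a frozen parameter --- which is precisely the meaning of the partial derivative $\partial p_\tx{net,x}/\partial\omega_\tx{x}$ in the definition of $d_{p-\omega}$. First I would set the \ac{rhs} of \eqref{eqs:sys COI1} to zero; rearranging the $\eta v_\tx{dc}$ term yields the single algebraic relation $\omega_\tx{x}=\eta v_\tx{dc,x}+\beta_{\theta_\tx{x}}$, i.e.\ $v_\tx{dc,x}=(\omega_\tx{x}-\beta_{\theta_\tx{x}})/\eta$. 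This is the only place the angle dynamics enter, and it is what lumps the $\theta_\tx{x}$-dependence into the constant $\beta_{\theta_\tx{x}}$.

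Next I would work down the dc chain. Zeroing the \ac{rhs} of \eqref{eqs:sys COI2} gives $i_\tx{dc,x}=i_\tx{r}+\kappa v_\tx{dc,r}-\kappa v_\tx{dc,x}=i_0-\kappa v_\tx{dc,x}$, and then zeroing \eqref{eqs:sys COI3} (recalling $i_\tx{net}=m^\top i$) gives $i_\tx{net,x}=i_\tx{dc,x}-g_\tx{dc}v_\tx{dc,x}=i_0-(\kappa+g_\tx{dc})v_\tx{dc,x}=i_0-G_\tx{dc}v_\tx{dc,x}$. Multiplying by $v_\tx{dc,x}$ gives $p_\tx{net,x}=i_0v_\tx{dc,x}-G_\tx{dc}v_\tx{dc,x}^2$, and substituting $v_\tx{dc,x}=(\omega_\tx{x}-\beta_{\theta_\tx{x}})/\eta$ expresses $p_\tx{net,x}$ purely in terms of $\omega_\tx{x}$ and the constants $i_0$, $G_\tx{dc}$, $\eta$, $\beta_{\theta_\tx{x}}$:
\[
p_\tx{net,x}=\frac{i_0}{\eta}\big(\omega_\tx{x}-\beta_{\theta_\tx{x}}\big)-\frac{G_\tx{dc}}{\eta^2}\big(\omega_\tx{x}-\beta_{\theta_\tx{x}}\big)^2 .
\]

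Finally, differentiating this scalar expression with respect to $\omega_\tx{x}$ with $\beta_{\theta_\tx{x}}$ held fixed gives $\partial p_\tx{net,x}/\partial\omega_\tx{x}=i_0/\eta-(2G_\tx{dc}/\eta^2)(\omega_\tx{x}-\beta_{\theta_\tx{x}})$, and collecting the term linear in $\omega_\tx{x}$ and the constant term reproduces \eqref{eq:droop slope}. The computation is essentially bookkeeping once the equilibrium relations are in place; the only delicate point --- which I would state explicitly --- is the interpretation of $d_{p-\omega}$ as a partial derivative in which the equilibrium angle is regarded as an independent parameter (consistent with \cite[Prop. 5]{AJD18}), so that the $\theta_\tx{x}$-contribution appears only through the affine intercept $(\eta i_0+2G_\tx{dc}\beta_{\theta_\tx{x}})/\eta^2$ and no chain-rule term arises. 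No identities beyond elementary algebra are needed; one should, however, note that the derivation presupposes $\eta>0$ (division by $\eta$), which is the regime in which this droop characterization is meaningful.
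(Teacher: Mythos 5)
Your proposal is correct and follows essentially the same route as the paper: zero the equilibrium relations \eqref{eqs:sys COI1}--\eqref{eqs:sys COI3} to get $v_\tx{dc,x}=(\omega_\tx{x}-\beta_{\theta_\tx{x}})/\eta$ and $p_\tx{net,x}=i_0 v_\tx{dc,x}-G_\tx{dc}v_\tx{dc,x}^2$, then differentiate in $\omega_\tx{x}$ with $\beta_{\theta_\tx{x}}$ frozen. The only differences are cosmetic (order of substitution and your explicit remarks on the partial-derivative interpretation and $\eta>0$, both of which are implicit in the paper's argument).
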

The proof is given in Appendix \ref{App:proofs}. 
%
%----------------------------------------
\begin{figure*}[b!]
	%\centering
	\hspace{0.14mm}
	{\includegraphics[trim=3.5cm 1cm 3cm 1cm ,clip,width=1.017\textwidth]{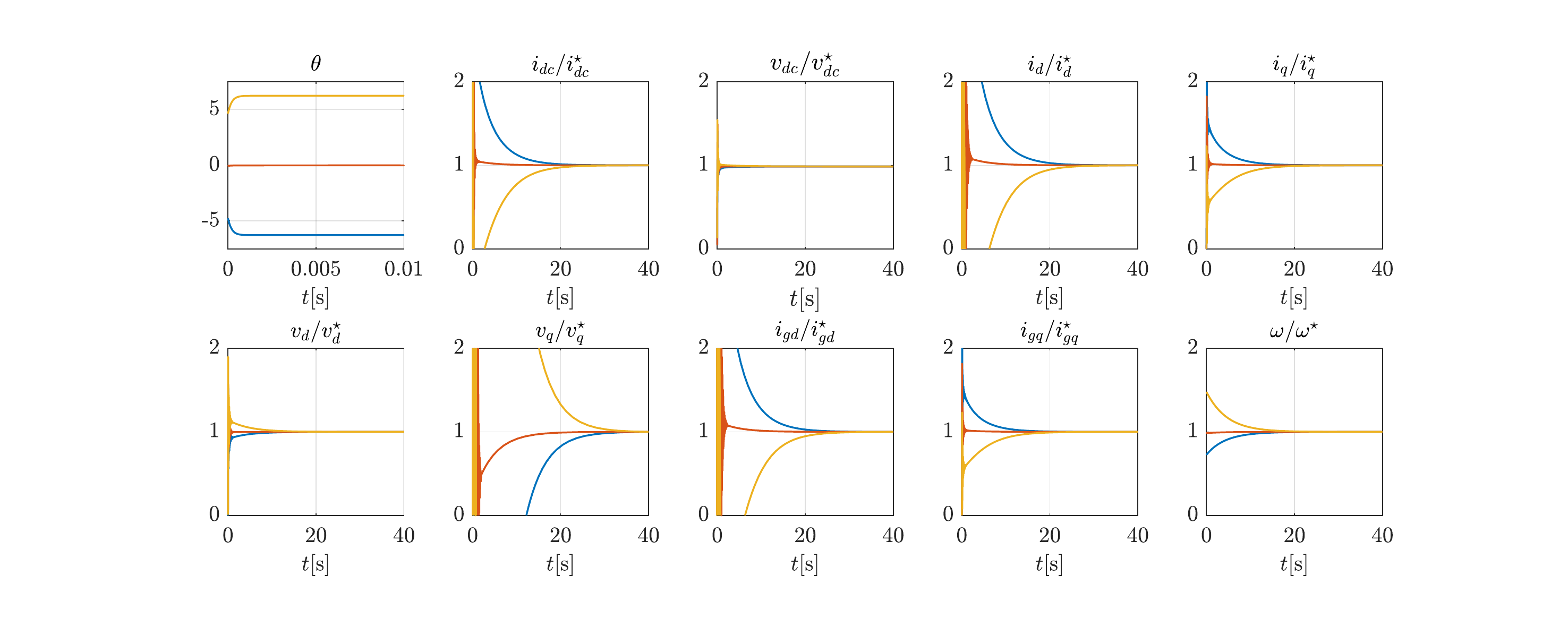}}
	\caption{Per-unit-normalized trajectories of the closed-loop system \eqref{eqs:sys COI} with the \ac{hac} implementation in Proposition \ref{prop:implementation}, parameters in Table \ref{tab:parameters} and three different initial conditions.}\label{fig:timeseries}
\centering
{\includegraphics[trim=6mm 6mm 6mm 6mm ,clip,width=0.18\textwidth]{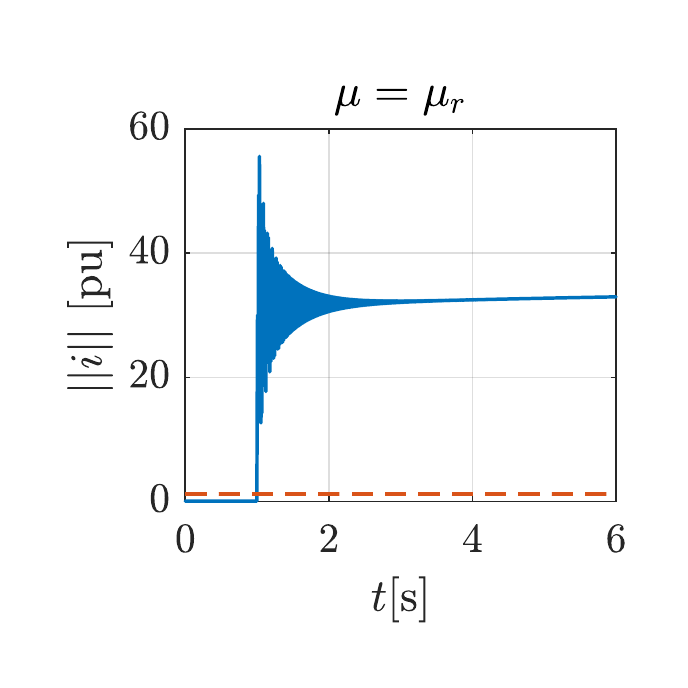}}\hfill
{\includegraphics[trim=6mm 6mm 6mm  6mm ,clip,width=0.18\textwidth]{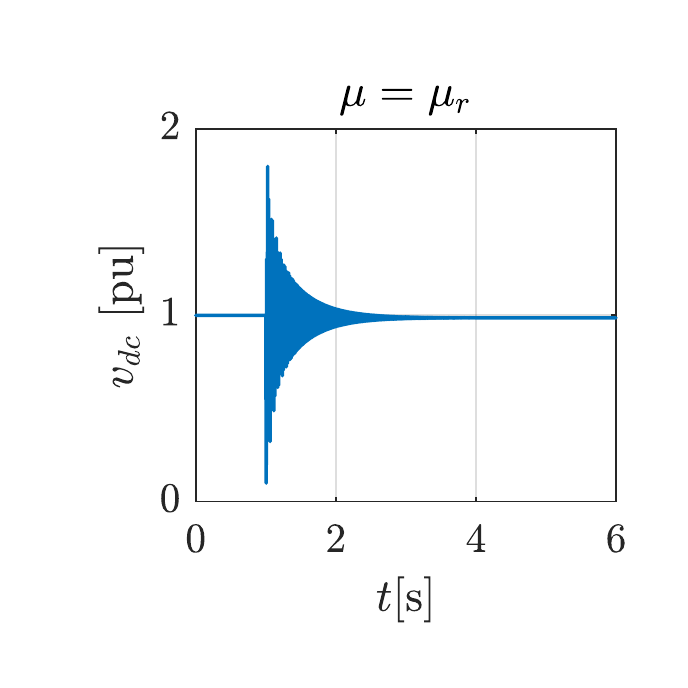}}\hfill
{\includegraphics[trim=6mm 6mm 6mm  6mm ,clip,width=0.18\textwidth]{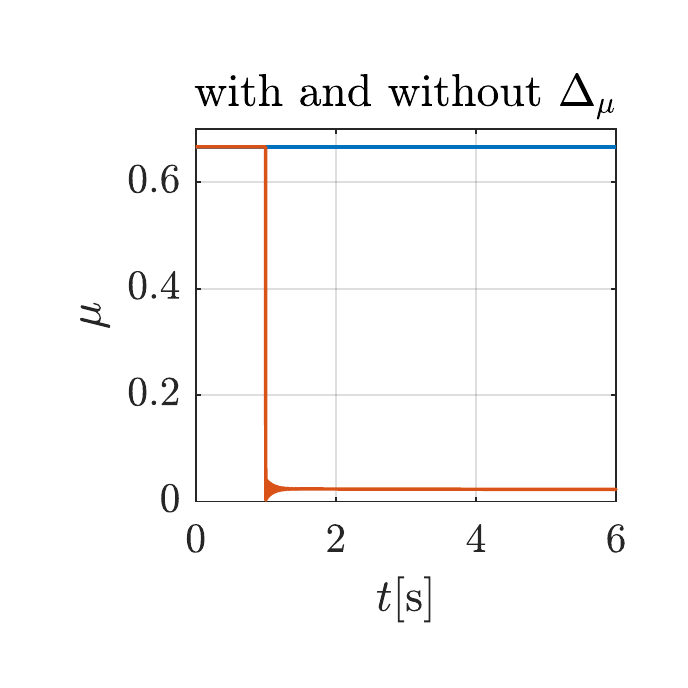}}\hfill
{\includegraphics[trim=6mm 6mm 6mm  6mm ,clip,width=0.18\textwidth]{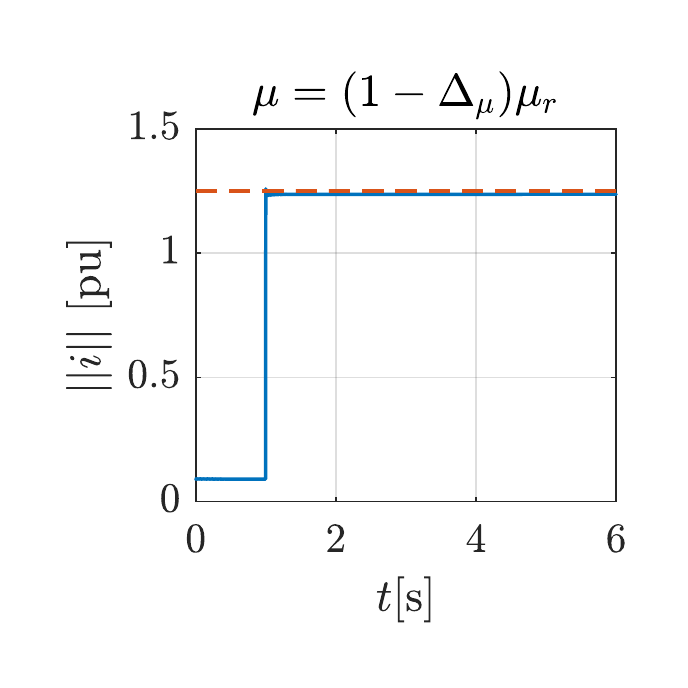}}\hfill
{\includegraphics[trim=6mm 6mm 6mm  6mm ,clip,width=0.18\textwidth]{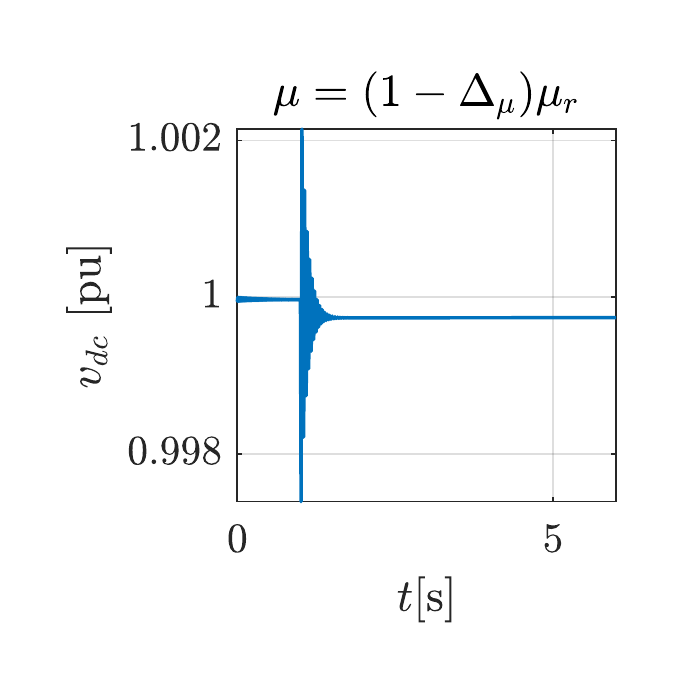}}
\caption{Filter current, dc voltage, and modulation magnitude time-series of \eqref{eqs:sys COI} following a three-phase-to-ground fault with and without current-limiting control \eqref{eq:mu} in per-unit system according to \cite[App. B]{YI10}.}\label{fig:grid fault}
\end{figure*}
%----------------------------------------
\begin{remark}\textup{(Comments on droop slope)}\\ 
With the particular choice of $\eta=\omega_0/v_\tx{dc,r}$ (see Remark~\ref{rem:variants}) the angle term in \eqref{eq:droop slope} simplifies to $\beta_{\theta_\tx{x}}=- \gamma \sin\big( (\theta_\tx{x}  - \theta_\tx{r} ) / 2 \big)$. Furthermore, assuming $\theta_\tx{x}  - \theta_\tx{r}$ is sufficiently small such that $2G_\tx{dc}\beta_{\theta_\tx{x}}/\eta^2$ is negligible compared to the other terms in \eqref{eq:droop slope} yields an approximation of $d_{p-\omega}$ that is 
\begin{equation}
d_{ p-\omega } \approx - \left( \dfrac{ 2 G_\tx{dc} } { \eta^2 } \right) \omega_\tx{x} + \left( \dfrac{ i_0 } { \eta } \right)
\end{equation} 
and coincides with the droop slope of the standard matching control \cite[Prop. 5]{AJD18}. 
%Next, consider $\eta=0$ in \eqref{eq:omega_c} (see Remark \ref{rem:variants}) and $\omega_\tx{x}\neq\omega_0$ in \eqref{eqs:sys COI6}. Then, by \eqref{eqs:COI1} the converter's operating frequency is 
%\begin{equation*}
%\omega_\tx{c,x}=\omega_0-\gamma\sin\left(\dfrac{\theta_\tx{x}-\theta_\tx{r}}{2}\right)=\omega_\tx{x}.
%\end{equation*}  
%Subsequently, the frequency sensitivity \ac{wrt} angle is given by
%\begin{equation}\label{eq:d omega/d theta}
%\dfrac{\partial\omega_\tx{x}}{\partial\theta_\tx{x}}:=-\dfrac{\gamma}{2}\cos\left(\dfrac{\theta_\tx{x}-\theta_\tx{r}}{2}\right).
%\end{equation}
%On the other hand, $p_\tx{net,x}$ can be alternatively written as $p_\tx{net,x}=\mu_\tx{r}v_\tx{dc,x}\n{i_\tx{x}}\cos(\theta_\tx{x}-\theta_{i,\tx{x}})$ (assuming lossless dc-ac power conversion) and thus the power-angle droop slope is
%\begin{align}\label{eq:d p/d theta}
%\dfrac{\partial p_\tx{net,x}}{\partial\theta_\tx{x}}:=&-\mu_\tx{r}v_\tx{dc,x}\n{i_\tx{x}}\sin(\theta_\tx{x}-\theta_{i,\tx{x}})
%\nonumber
%\\
%=&-i_\tx{x}^\top\m{J}v_\tx{s,x}=-q_\tx{s,x}.
%\end{align}
%Finally, the power-frequency droop slope for the pure angle feedback control is obtained by dividing \eqref{eq:d p/d theta} with \eqref{eq:d omega/d theta}
%\begin{equation}\label{eq:droop slope eta=0}
%%\dfrac{\partial p_\tx{net,x}}{\partial\omega_\tx{x}}
%d_{p-\omega}:={2q_\tx{s,x}}\big/{\gamma \cos\left(\dfrac{\theta_\tx{x}-\theta_\tx{r}}{2}\right)}.
%\end{equation}
We close by remarking that in a multi-converter network, the ratio of droop slopes determines the proportional power-sharing among the converters.
\end{remark}
\vspace{-0.3cm}
\section{Numerical Examples}\label{sec:simulation}
In this section, we consider the closed-loop dynamics of the converter-\ac{coi} system described in the Section \ref{subsec:COI} with parameters in Table \ref{tab:parameters}. The converter parameters are taken from \cite[Table 1]{TGAKD20} (that correspond to a commercial device). In the following, we verify the convergence of closed-loop dynamics \eqref{eqs:sys COI} and the performance of current-limiting control \eqref{eq:mu}. Moreover, we investigate the influence of \ac{hac} on the frequency performance. We remind the reader that the numerical examples in MATLAB/Simulink environment are publicly available  \cite{T20}.    
\begin{table}[b!]
	\caption{The parameters of converter-\ac{coi} system \eqref{eqs:sys COI}. \label{tab:parameters}}
	\centering
	\renewcommand{\arraystretch}{1.4}
	\begin{tabular}[]{c|c|c}
		\hline
		\multicolumn{3}{c}{converter model parameters and nominal values}\\
		\hline
		$S_\tx{r,c}=0.5~\mathrm{[MVA]}$& $v_\tx{r}=816.4~\mathrm{[V]}$ & $\omega_0=2\pi50~\mathrm{[rad/s]}$
		\\ 
		$c_\tx{dc}=0.008~\mathrm{[F]}$  & $\ell=\ell_\tx{g}=200~\mathrm{[\mu H]}$ & $c=300~\mathrm{[\mu F]}$
		\\ 
		$r=r_\tx{g}=0.001~\mathrm{[\Omega]}$ & $g_\tx{dc}=g=0.001~[\Omega^{\mathrm{-1}}]$ & $\tau_\tx{dc}=50 ~\mathrm{[ms]}$
		\\
		\hline
		\multicolumn{3}{c}{center of inertia model parameters}
		\\
		\hline
		$S_\tx{r,g}=5~\mathrm{[MVA]}$&$H=5~\mathrm{[s]}$ & $D=100$
		\\ 
		\hline
		\multicolumn{3}{c}{control parameters and reference values}
		\\
		\hline
		$\theta_\tx{r}=0$ & $i_\tx{dc,r}=0$ & $v_\tx{dc,r}=3v_\tx{r}$ 
		\\
		$\eta=0.01$ & $\gamma=10000$ & $\kappa=2$
		\\
		$\mu_\tx{r}=2v_\tx{r}/v_\tx{dc,r}$ &$b=v_\tx{r}/\omega_0$& $T_\tx{m}=D\omega_0$
		\\
		\hline
	\end{tabular}
\end{table}
\subsection{Convergence of Random Initial Conditions}
Consider the closed-loop dynamics of the converter-\ac{coi} system \eqref{eqs:sys COI} with angle feedback implementation \eqref{eq:u_sw} and parameters in Table \ref{tab:parameters}. 
Figure \ref{fig:timeseries} illustrates the time-evolution of $\theta$ and all per-unit-normalized Euclidean states $\ubar{y_{j}}/\ubar{y_{j}}^\star$ for $j=1,\ldots,9$ starting from three different initial conditions. Note that since $\theta_r=0$, Figure \ref{fig:timeseries} also depicts the converter angle synchronization with that of the COI grid model.
Although depending on the initial value $\ubar{x}(0)$, $\theta$ converges to either $\theta_\tx{r}$ or $\theta_\tx{r}\pm 2\pi$ (see Remark~\ref{rem:switching}), $\ubar{y}^\star$ is unique (due to its $2\pi$-periodicity in angle). 
The fast convergence of angle and dc voltage are underpinned by relatively large $\gamma/\eta$ ratio and $\kappa$. %, and small $\tau_\tx{dc}$.
In contrast, the oscillatory behavior of the ac states are due to the negligible physical damping $r$, $g$, $r_\tx{g}$ and the fact that their dynamics are influenced by the sluggish \ac{coi} frequency (via the impedance and admittance matrices in \eqref{eqs:sys COI4}-\eqref{eqs:sys COI5}). 
 Moreover, the slow convergence of frequency is because of %relatively large $H$ (i.e., $J$) compared to $c_\tx{dc}$ and 
 the fact that -- from practical perspective -- $\omega(0)$ is far from $\omega^\star$ (cf. the convergence timescale of the initial condition with $\omega(0)$ close to $\omega^\star$ (red trajectories) and the evolution of other initial states).
Nonetheless, with a sufficiently large $D$ and $\gamma$ (see stability condition \eqref{eqs:stability conditions COI}) the asymptotic convergence of trajectories is guaranteed under \ac{hac}. 
\begin{figure}[t!]
	\centering
	%\hspace{-3.16mm}	
	\begin{tikzpicture}
	\begin{scope}[spy using outlines={gray!75!, magnification=10, size=1.4cm, connect spies,circle,ultra thin}]
	\node[](image) at (0,0){\hspace{-1mm}\includegraphics[trim=5mm 5mm 5mm 10mm ,clip,width=0.95\columnwidth]{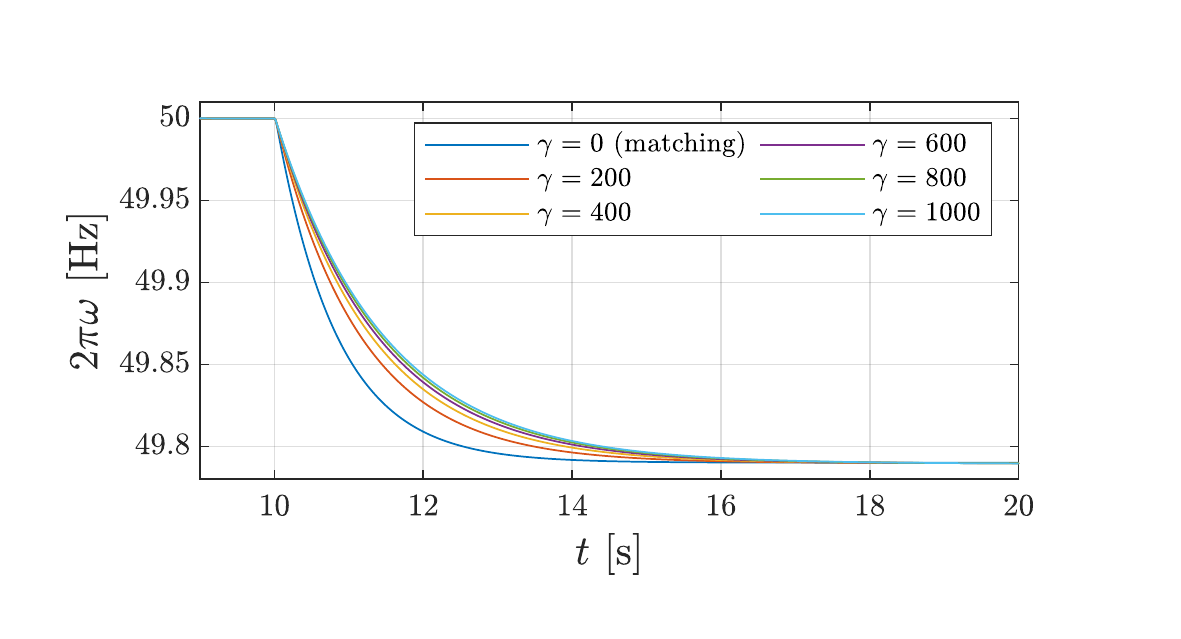}};
	\spy[fill=white] on (-2.43,1.5) in node at (-0.5,0.1);
	\end{scope}
	\end{tikzpicture}
	\caption{Post-contingency frequency evolution of \eqref{eqs:sys COI} for different \ac{hac} tuning.}\label{fig:frequency}
	\includegraphics[trim=5mm 5mm 5mm 5mm ,clip,width=0.95\columnwidth]{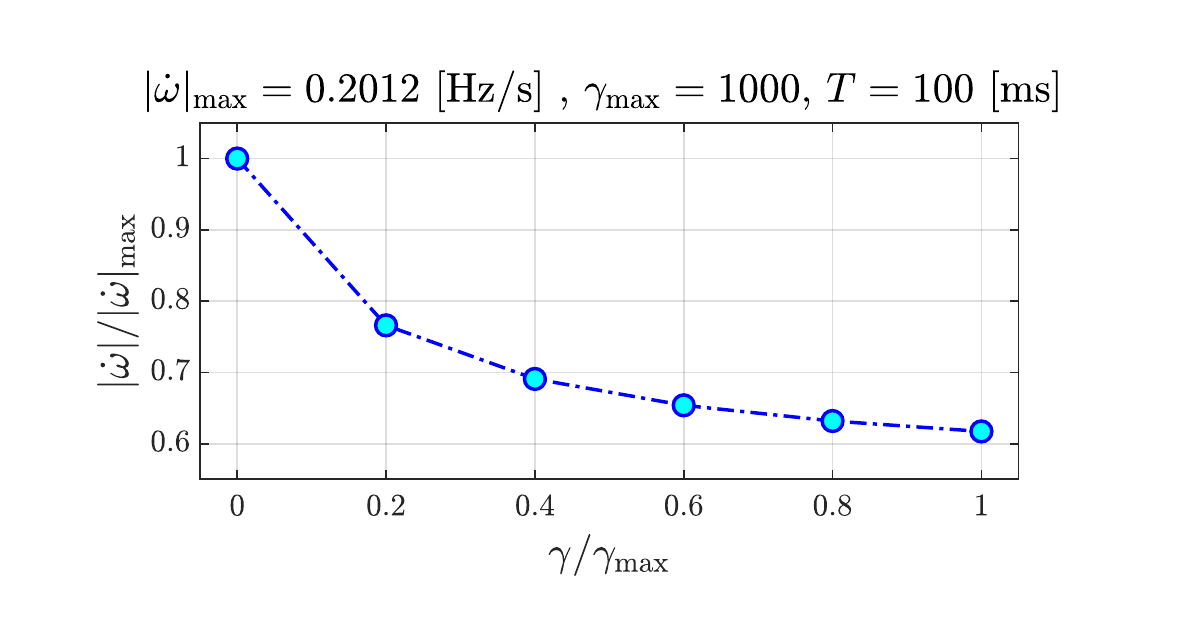}
	\caption{Post-contingency normalized \ac{rocof} for different \ac{hac} tuning corresponding to the frequency time-series in Figure \ref{fig:frequency}.}\label{fig:ROCOF}
	%\vspace{-3mm}
\end{figure}
\subsection{Current-Limiting Control Performance}\label{subsec:current-limiting example}
Consider the system in previous example in combination with the current-limiting control \eqref{eq:mu} with $\Delta_\mu$ as in Proposition \ref{prop:current limitation} where $\beta=0.25$, $i_\tx{th}=1.25$ pu, and $D_\tx{min}=0.01$. In what follows, we consider a balanced three-phase-to-ground fault (see \cite[Sec. 3.7]{kundur1994power} for the fault modeling) at the filter capacitance node in Figure \ref{fig:SCIBsys} driving $\n{v}\rightarrow0$ and $\mc{D}\to 0$. Figure \ref{fig:grid fault} shows that the current-limiting control \eqref{eq:mu} (by an immediate reduction of the constant reference $\mu_r$) effectively limits the post-fault current magnitude that aggressively exceeds $i_\tx{th}$ under constant modulation magnitude (that in practice trips the converter protection mechanisms). It is noteworthy that $\Delta_\mu$ also significantly reduces the magnitude of oscillations in dc voltage (cf. to the case with constant modulation magnitude). 
%-------------------------------------------------------------------------------------------
\subsection{\ac{hac} Influence on Frequency Performance}\label{subsec:frequency performance}
In this example, we consider the converter-\ac{coi} closed-loop system \eqref{eqs:sys COI} with $\eta=\omega_0/v_\tx{dc,r}=0.128$ which renders the dc term in \ac{hac} identical to the standard matching control (see Remark \ref{rem:variants} and \cite{AJD18}). Initially, it is assumed that $\gamma=0$. Then, $\kappa=5$ in \eqref{eqs:sys COI2} and $D=300$ \eqref{eqs:sys COI6} are selected such that the converter and \ac{coi} grid model exhibit equal post-contingency load-sharing (i.e., equal increase or decrease in their active power injections). In this example, the contingency is an active power load disturbance that is sized to $0.5 S_\tx{r,c}$ and is modeled by connecting a constant impedance $\m{Z}_\tx{load}$ in parallel connection with the ac-side capacitance in Figure \ref{fig:SCIBsys}. Figure \ref{fig:frequency} illustrates the post-contingency evolution of \ac{coi} frequency (in Hertz) for different $\gamma$ values. Figure \ref{fig:frequency} suggests that \ac{hac} by the means of its angle feedback in \eqref{eq:omega_c}  significantly improves the frequency response of the standard matching control. This improvement is more tangible in the \ac{rocof} performance metric \cite[Sec. III-A]{poolla_placement_2018} i.e.,
\begin{equation}
|{\dot{\omega}}|\coloneqq\frac{|\omega(t_0+T)-\omega(t_0)|}{T},
\end{equation}   
where $t_0$ denotes the time when the disturbance is applied and $T$ denotes the \ac{rocof} approximation time horizon. Figure \ref{fig:ROCOF} shows that the \ac{rocof} exponentially and drastically decreases as $\gamma$ increases. In other word, the angle feedback of the \ac{hac} \eqref{eq:omega_c} serves as a remedy for the suboptimal matching control \ac{rocof} performance (see \cite[Fig. 11]{TGAKD20} for a comparison).  The pure matching control senses the load disturbance (and accordingly modifies the angle dynamics) once its aftermath is propagated to the dc voltage dynamics through filter current dynamics. However, \ac{hac} with its multi-variable nature that includes both dc and ac feedback reacts to the disturbance on a slightly faster timescale that, in our opinion, explains its enhanced frequency response. 
%-------------------------------------------------------------------------------------------
\section{Summary and Outlook}\label{sec:conclusion}
In this paper, we introduced a new grid-forming control termed hybrid angle control (\ac{hac}) \eqref{eq:omega_c}. We formally established the existence, uniqueness, and almost global stability of the closed-loop equilibria under mild parametric conditions. We extended the stability guarantees of \ac{hac} by considering grid dynamics and synthesized a new current-limiting control to account for the converter's safety constraints. Furthermore, a practical implementation of \ac{hac}, its intrinsic power-frequency droop behavior, and a feedforward ac voltage and power control were discussed. Last, the performance of control proposals \eqref{eq:omega_c} and \eqref{eq:delta_mu} was investigated with numerical examples. Our future works will include, 1) stability analysis of interconnected converters under \ac{hac}, 2) exploring the dynamic response of the \ac{hac} and comparing its performance and robustness \ac{wrt} the other control techniques, and 3) investigating the compatibility of proposed current-limiting control with different grid-forming control strategies.
%------------------------------------------------------------------------------------------
\appendix
%-------------------------------------------------------------------------------------------
\subsection{Proof of Technical Results}\label{App:proofs}
\begin{proof}[Proof of Proposition \ref{prop: instability}]
Consider the shorthand $\dot{x} = f(x)$ for \eqref{eqs:sys} and let $\m{J}_\tx{f}(x)$ be the Jacobian of $f(x)$, then $\det{ \big( \m{J}_\tx{f}(x) \big) } = {\det{ \big( \m{H}(x) \big) }} / { \tau_\tx{dc} c_\tx{dc} ( \ell c \ell_\tx{g} )^2 }$ where $\m{H}(x) = \big( \begin{smallmatrix} \m{H}_{11} & \m{H}_{12} \\ \m{H}_{21} & \m{H}_{22} \end{smallmatrix} \big) \coloneqq $ 
\begin{equation*}\label{eq:J}
\setlength{\extrarowheight}{4pt}
\setlength{\arraycolsep}{4pt}
\left( \begin{array}{cc:cccc}
-\dfrac{ \gamma }{ 2 } \cos\left( \dfrac{ \theta - \theta_\tx{r} } { 2 } \right) & 0 & \eta & 0_2^\top & 0_2^\top & 0_2^\top
\\ %\hline
0 & -1 & {- \kappa} & 0_2^\top & 0_2^\top & 0_2^\top
\\
\hdashline
- \dfrac{ \partial m(\theta) } { \partial \theta}^{\top} i & 1 & -  g_\tx{dc} &  - m(\theta)^\top & 0_2^\top & 0_2^\top
\\
v_\tx{dc} \dfrac{ \partial m(\theta) } { \partial \theta }^\top & 0_2 & m(\theta) &  -\m{Z} &  - \m{I} & {0}_{2\times 2}
\\
0_2 & 0_2 &  0_2 & \m{I} &  - \m{Y} &  - \m{I}
\\
0_2 & 0_2 &  0_2 & 0_{2\times 2} & \m{I} &  - \m{Z}_\tx{g}
%\\
\end{array} \right).
	\end{equation*} 
Evaluating $\m{H}_{11}$ at $x^\star_\tx{u}$ in \eqref{eq:EqSet} results in $\m{H}_{11}=
\Big(\begin{smallmatrix} {\gamma}/{2} & 0 \\0 & -1 \end{smallmatrix}\Big)$
which is invertible for $\gamma>0$. Thus, the overall determinant is
\begin{equation}\label{eq:DetJ}
\det(\m{H}(x^\star_\tx{u}))=\det(\m{H}_\tx{11})\det(\m{H}_\tx{22}-\m{H}_\tx{21}\m{H}^{-1}_\tx{11}\m{H}_\tx{12}).
\end{equation}
	Define $\m{K} \coloneqq \m{H}_\tx{22}-\m{H}_\tx{21}\m{H}^{-1}_\tx{11}\m{H}_\tx{12}=$
	\begin{equation*}
	\setlength{\extrarowheight}{4pt}
	\setlength{\arraycolsep}{4pt}
	\left(\begin{array}{cccc}
	- G_\tx{dc}+\left(\dfrac{2\eta }{\gamma}\right)\dfrac{\partial m(\theta)}{\partial \theta}\Big|_{\theta=\theta_\tx{r}}^\top i^\star~~ & - m(\theta_\tx{r})^\top & 0_2^\top & {0}_2^\top\vspace{1mm}
	\\
	{m(\theta_\tx{r})}-\left(\dfrac{2\eta v_\tx{dc}^\star}{\gamma}\right) \dfrac{\partial m(\theta)}{\partial \theta}\Big|_{\theta=\theta_\tx{r}} & -\m{Z} & -\m{I} & {0}_{2\times 2}
	\\
	{0}_2 & \m{I} & -\m{Y} & {-\m{I}}
	\\
	0_2 & 0_{2\times 2} & \m{I} & {-\m{Z}_\tx{g}}
	\end{array}\right)
	\end{equation*}
	where $G_\tx{dc} \coloneqq g_\tx{dc}+\kappa$ and consider the symmetric part of $\m{K}$ i.e., $\m{K}_\tx{S} \coloneqq (1/2)(\m{K}+\m{K}^\top)$.  Next, we show that $\m{K}_\tx{S}\prec0$ under \eqref{eq:stability condition}. Schur complements analysis yields that $\m{K}_\tx{S}\prec0$ iff
	\begin{equation}\label{eq:H_s ND condition}
	\dfrac{2\eta \mu_\tx{r}\gamma\dfrac{\partial\psi(\theta)}{\partial\theta}\Big|_{\theta=\theta_\tx{r}}^\top i^\star}{G_\tx{dc}}+\dfrac{(\eta\mu_\tx{r} v^\star_\tx{dc})^2}{rG_\tx{dc}}<\gamma^2\,.
	\end{equation}
	We apply the identity \eqref{id:binomial} to the first term on the \ac{rhs} of \eqref{eq:H_s ND condition} 
	\begin{align}
	2\mu_\tx{r}\dfrac{\partial\psi(\theta)}{\partial\theta}\Big|_{\theta=\theta_\tx{r}}^\top i^\star&\leq \bigg|\bigg|\dfrac{\partial\psi(\theta)}{\partial\theta}\Big|_{\theta=\theta_\tx{r}}\bigg|\bigg|^2+(\mu_\tx{r}\n{i^\star})^2
	\nonumber
	\\
	&=1+(\mu_\tx{r}\n{i^\star})^2.
	\label{eq:bound i_net}
	\end{align}
	Subsequently, taking into account the bound in \eqref{eq:bound i_net}, if 
	\begin{equation} \label{eq:H_s ND condition2}
	\dfrac{\eta\gamma}{G_\tx{dc}}+\dfrac{\eta\gamma(\mu_\tx{r}\n{i^\star})^2}{G_\tx{dc}}+\dfrac{(\eta\mu_\tx{r} v^\star_\tx{dc})^2}{rG_\tx{dc}}<\gamma^2\,,
	\end{equation}
	then \eqref{eq:H_s ND condition} is satisfied. Further, dividing \eqref{eq:H_s ND condition2} by $\gamma$ results in
	\begin{equation}\label{eq:H_s ND condition3}
	\dfrac{\eta}{\bar{\beta}g_\tx{dc}}+\dfrac{\eta(\mu_\tx{r}\n{i^\star})^2}{\bar{\beta}g_\tx{dc}}+\dfrac{\eta(\mu_\tx{r} v^\star_\tx{dc})^2}{ \bar{\alpha} r}<\gamma
	\end{equation}
	where $\bar{\alpha} \coloneqq {\gamma G_\tx{dc}}/{\eta}$ and $\bar{\beta}\coloneqq G_\tx{dc}/g_\tx{dc}$. Since $\bar{\beta}>1$ by definition (recall that $G_\tx{dc}=\kappa+g_\tx{dc}$), if $\bar{\alpha}>1$ then the \ac{lhs} of \eqref{eq:H_s ND condition3} is strictly smaller that the \ac{lhs} of \eqref{eq:stability condition}. That means if $\bar{\alpha}>1$ then \eqref{eq:stability condition} implies \eqref{eq:H_s ND condition3}. To show that $\bar{\alpha}>1$  that equals $\gamma>{\eta}/\bar{\beta}g_\tx{dc}$ consider that if \eqref{eq:stability condition} holds then $\gamma>{\eta}/g_\tx{dc}>{\eta}/\bar{\beta}g_\tx{dc}$ hence $\bar{\alpha}>1$. 
	
To sum up, under \eqref{eq:stability condition}, $\m{K}_\tx{S}\prec0$. Thus, $\m{K}$ has all eigenvalues in the open left half-plane. Since $\mathrm{dim}(\m{K})=7$, then $\det(\m{K})<0$ and by \eqref{eq:DetJ} $\det(\m{H}(x^\star_\tx{u}))=-\gamma\det(\m{K})/2>0$ which means $\det(\m{J}_\tx{f}(x^\star_\tx{u}))>0$.  Since $\mathrm{dim}(\m{J}_\tx{f}(x))=9$, then $\m{J}_\tx{f}(x^\star_\tx{u})$ has at least one positive real eigenvalue. Instability of $x^\star_\tx{u}$ follows from Lyapunov's indirect method \cite[Th. 4.7]{khalil_nonlinear_2002} and its global inset has zero Lebesgue measure invoking \cite[Prop. 11]{monzon_local_2006}.
\end{proof}
%---------------------------------------------------
\begin{proof}[Proof of Proposition \ref{prop:consistent references}]
	The power injection to the \ac{ib} at equilibrium \cite[Def. 2]{CGBF19} can be expressed as
	\begin{subequations}\label{eqs:p*_g,q*_g}
		\begin{align}
		&p^\star_\tx{g}  = - \dfrac
		{ \n{ v^\star } v_\tx{r} \big( r_\tx{g} \cos\left( \delta^\star_{\tx{b}v} \right) + \ell_\tx{g} \omega_0 \sin\left( \delta^\star_{\tx{b}v} \right) \big) }
		{ r_\tx{g}^2 + \left( \ell_\tx{g} \omega_0 \right)^2 },
		\\
		&q^\star_\tx{g}  = - \dfrac
		{ \n{ v^\star } v_\tx{r} \big( \ell_\tx{g} \omega_0 \cos\left( \delta^\star_{\tx{b}v} \right) - r_\tx{g} \sin\left( \delta^\star_{\tx{b}v} \right) \big) }
		{ r_\tx{g}^2 + \left( \ell_\tx{g} \omega_0 \right)^2 },	
		\end{align}
	\end{subequations}
	where $\delta^\star_{\tx{b}v} \coloneqq \theta^\star_\tx{b} - \theta^\star_v$ and $\theta^\star_v=\tan^{-1}(v^\star_\tx{q}/v^\star_\tx{d})$. Taking into account the line loss and the power associated with the shunt element, we can compute $p^\star_\tx{f}$ and $q^\star_\tx{f}$ (see Figure \ref{fig:SCIBsys}) by
	\begin{align*}
	p^\star_\tx{f} & = p^\star_\tx{g} + \bigg( \dfrac{ r_\tx{g} }{ r_\tx{g}^2 + \left( \ell_\tx{g} \omega_0 \right)^2 } + g \bigg)\n{ v^\star }^2 ,
	\\
	q^\star_\tx{f} & = q^\star_\tx{g} + \bigg( \dfrac { \ell_\tx{g} \omega_0 } { r_\tx{g}^2 + \left( \ell_\tx{g} \omega_0 \right)^2 } - c \omega_0 \bigg)\n{ v^\star }^2 .
	\end{align*}
	Note that $p^\star_\tx{f}$ and $q^\star_\tx{f}$ are also expressed by
	\begin{subequations}\label{eqs:p*_f,q*_f}
		\begin{align}
		p^\star_\tx{f} & = -\dfrac
		{ \n{v^\star_\tx{s} } \n{ v^\star } \big( r \cos\left( \delta_{v\tx{c}} \right) + \ell \omega_0 \sin\left( \delta_{v\tx{c}} \right) \big) } 
		{ r^2 + \left( \ell \omega_0 \right)^2 }, 
		\\
		q^\star_\tx{f} & = -\dfrac
		{ \n{ v^\star_\tx{s} } \n{ v^\star } \big( \ell \omega_0 \cos\left( \delta_{v\tx{c}} \right) - r \sin\left( \delta_{v\tx{c}} \right) \big) }
		{ r^2 + \left( \ell \omega_0 \right)^2 },
		\end{align}
	\end{subequations}
	where $\delta_{v\tx{c}} \coloneqq \theta^\star_v - \theta^\star_\tx{c}$ and $\n{v^\star_\tx{s}} \coloneqq \mu^\star v^\star_\tx{dc}$ denotes the equilibrium voltage magnitude before the filter inductor. 
	
	Consider the shorthand $\det(\m{Z}_\tx{g})=r_\tx{g}^2 + ( \ell_\tx{g} \omega_0 )^2 $, and let us define $\vartheta_\tx{g} \coloneqq \tan^{-1}( \ell_\tx{g} \omega_0 / r_\tx{g} )$, $ \sin( \vartheta_\tx{g} )  \coloneqq  { \ell_\tx{g} \omega_0 } / { \sqrt{ \det(\m{Z}_\tx{g})} }$, and $ \cos( \vartheta_\tx{g} )  \coloneqq  { r_\tx{g} } / { \sqrt{ \det(\m{Z}_\tx{g}) } }$. Then, \eqref{eqs:p*_g,q*_g} is equivalent to
	\begin{equation*}%\label{eq:s*_g}
	s^\star_\tx{g}=-\n{ s^\star_\tx{g} } \m{R}( \vartheta_\tx{g} ) \psi( \theta^\star_v - \theta^\star_\tx{b} )
	=-\n{ s^\star_\tx{g} } \psi( \theta^\star_v + \vartheta_\tx{g} - \theta^\star_\tx{b} ),
	\end{equation*}
	where $\n{s^\star_\tx{g}}={ v_\tx{r} \n{ v^\star } } / { \sqrt{ \det(\m{Z}_\tx{g}) } }$ and subsequently, $\hat{s}^\star_\tx{g}=-\psi(\theta^\star_v + \vartheta_\tx{g}-\theta^\star_\tx{b} )$. Similarly, define $\vartheta_\tx{f}  \coloneqq  \tan^{-1}(\ell\omega_0/r)$, then \eqref{eqs:p*_f,q*_f} is equivalent to 
	\begin{equation*}
	s^\star_\tx{f}=-\n{ s^\star_\tx{f} } \m{R}( \vartheta ) \psi( \theta^\star_\tx{c} - \theta^\star_v )
	=-\n{ s^\star_\tx{f} } \psi( \theta^\star_\tx{c} + \vartheta - \theta^\star_v ),
	\end{equation*}
	where $\n{s^\star_\tx{f}}={ \n{ v^\star_\tx{s} } \n{ v^\star } } / { \sqrt{ \det(\m{Z}) } }$. Thus, $\hat{s}^\star_\tx{f}=-\psi( \theta^\star_\tx{c} + \vartheta - \theta^\star_v )$. By the means of \eqref{id: sine angle sum} and \eqref{id: cosine angle sum}
	\begin{align*}
	\Big( 
	\hat{s}_\tx{g}^{\star\top} \left(\begin{smallmatrix} +1 & 0 \\ 0 & -1 \end{smallmatrix}\right) \hat{s}^\star_\tx{f}
	, 
	\hat{s}_\tx{g}^{\star\top} \left(\begin{smallmatrix} 0 & +1 \\+1 & 0 \end{smallmatrix}\right) \hat{s}^\star_\tx{f} 
	\Big)
	=
	\psi( \theta^\star_\tx{c} - \theta^\star_\tx{b}+\delta )
	\end{align*}
	and subsequently, $\m{R}(\delta)^\top\psi( \theta^\star_\tx{c} - \theta^\star_\tx{b}+\delta )=\psi(\theta^\star)$. 
	
	Thus, $\psi(\theta_\tx{r})$ that is uniquely defined by \eqref{eq:psi(theta_r)} coincides with the solution of power flow equations, i.e., $\psi(\theta_\tx{r})=\psi(\theta^\star)$. To prove the second statement \eqref{eq:mu_r}, $\mu^\star$ is derived from the expression of $\n{s^\star_\tx{f}}$ i.e., 
	$\mu^\star={\n{s^\star_\tx{f}} \sqrt{ \det(\m{Z})}}/{{ v^\star_\tx{dc}\n{v^\star} }}
	$, 	which shows $\mu_\tx{r}$ defined by \eqref{eq:mu_r} is consistent with $\mu^\star$. 
\end{proof}
%--------------------------------------------------------------------------------------
%--------------------------------------------------------------------------------------
\begin{proof}[Proof of Proposition \ref{prop:droop slope}]
	Note that by the relative angle dynamics \eqref{eqs:sys COI1} at equilibrium $\omega_\tx{c,x}=\omega_\tx{x}$.
	Multiply \eqref{eqs:sys COI3} at equilibrium by $v_\tx{dc,x}$
	\begin{equation*}
	i_\tx{dc,x}v_\tx{dc,x}-g_\tx{dc}v^2_\tx{dc,x}-p_\tx{net,x}=0,
	\end{equation*} 
	and replace $v_\tx{dc,x}$ with the expression from $\eqref{eq:omega_c}$, that is, $v_\tx{dc,x}=(\omega_\tx{x}-\beta_{\theta_\tx{x}})/\eta$ which results in
	\begin{equation}\label{eq:p_x}
	p_\tx{net,x}=\dfrac{i_\tx{dc,x}(\omega_\tx{x}-\beta_{\theta_\tx{x}})}{\eta}-\dfrac{g_\tx{dc}(\omega_\tx{x}-\beta_{\theta_\tx{x}})^2}{\eta^2}.
	\end{equation} 
	Replacing $i_\tx{dc,x}$ from \eqref{eqs:sys1+} at equilibrium results in
	\begin{equation}\label{eq:p_x2}
	p_\tx{net,x}=\dfrac{i_\tx{0}(\omega_\tx{x}-\beta_{\theta_\tx{x}})}{\eta}-\dfrac{(\kappa+g_\tx{dc})(\omega_\tx{x}-\beta_{\theta_\tx{x}})^2}{\eta^2}.
	\end{equation} 
	Hence, \eqref{eq:droop slope} directly follows by linearizing \eqref{eq:p_x2} \ac{wrt} $\omega_\tx{x}$. 
\end{proof}
%----------------------------------------------------------------------------------------
\subsection{Coordinate Transformations and Identities}
\subsubsection{$\alpha\beta$-coordinates}\label{app:Clarke}
for a three-phase quantity $z_\tx{abc} \coloneqq (z_\tx{a},z_\tx{b},z_\tx{c})\in\mathbb{R}^3$ that is balanced i.e., $z_\tx{a}+z_\tx{b}+z_\tx{c}=0$ the magnitude preserving Clarke transformation is defined by
\begin{equation}
\setlength{\extrarowheight}{10pt}
z_{\alpha\beta}=(z_\alpha,z_\beta) \coloneqq \m{C}z_\tx{abc}=\dfrac{2}{3}\begin{pmatrix}
1&-\dfrac{1}{2}&-\dfrac{1}{2}\\
0&\dfrac{\sqrt{3}}{2}&-\dfrac{\sqrt{3}}{2}
\end{pmatrix}z_\tx{abc}\,.
\end{equation}
\subsubsection{Polar coordinates}\label{app:Polar}
%\subsection{Appendix A}
the transformation from Cartesian to polar coordinates i.e.,  $\mathcal{P}:\mathbb{R}^2\backslash\{0\}\rightarrow\mathbb{R}_{>0}\times\mathbb{S}^1$ is
%\begin{subequations}
\begin{equation}\label{eqs:dq2polar}
\big(\n{z},\theta_z\big)=\mathcal{P}(z) \coloneqq \left(\sqrt{z^2_1+z^2_2}\,,\tan^{-1}\dfrac{z_2}{z_1}\right).
\end{equation}
Moreover, the inverse transformation is given by
\begin{equation}\label{eqs:polar2dq}
(z_1,z_2)=\mc{P}^{-1}\big(\n{z},\theta_z\big) \coloneqq \n{z}\psi(\theta_z).
\end{equation}  
Note that the polar coordinates are well-defined for the entire Cartesian space except the origin since $\mathcal{P}(0)$ is not unique.
%\subsection{Algebraic and Trigonometric Identities}
\begin{lemma}\textup{(Algebraic and trigonometric identities)}\\
	For $ a , b \in \mathbb{R}^2 $, $ \epsilon \in \mathbb{R}_{>0} $ and $ \varphi , \phi \in \mathbb{S}^1$ the followings hold
	\begin{align}
	\pm a^\top b & \leq \epsilon^2 \n{a}^2 + \dfrac{1} {4\epsilon^2} \n{b}^2, \label{id:binomial}\\
	\sin^2\dfrac{\varphi}  {2} & = \big({ 1 - \cos\varphi }\big) / {2}, \label{id: sine half angle}\\
	\cos^2\dfrac{\varphi}  {2} & = \big({ 1 + \cos\varphi }\big) / {2}, \label{id: cosine half angle}\\
	\sin( \varphi \pm \phi ) & = \sin(\varphi) \cos(\phi) \pm \cos(\varphi) \sin(\phi), \label{id: sine angle sum}\\
	\cos(\varphi \pm \phi ) & = \cos(\varphi) \cos(\phi) \mp \sin(\varphi) \sin(\phi).\label{id: cosine angle sum}
	\end{align}  	
\end{lemma}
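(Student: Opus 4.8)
The plan is to dispatch the purely algebraic bound \eqref{id:binomial} on its own and then to organize the four trigonometric identities hierarchically: I would establish the angle-sum formulas \eqref{id: sine angle sum}--\eqref{id: cosine angle sum} first, since the half-angle identities \eqref{id: sine half angle}--\eqref{id: cosine half angle} and even the Pythagorean identity then fall out as immediate specializations. None of these require machinery beyond the inner-product structure of $\mathbb{R}^2$ and the rotation law already used throughout the paper, so the whole argument is elementary; the only thing warranting care is sign bookkeeping in the $\pm/\mp$ variants.

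For \eqref{id:binomial} I would complete the square. For either sign the vector $\epsilon a \mp (1/(2\epsilon)) b \in \mathbb{R}^2$ has nonnegative squared Euclidean norm, and expanding $\n{\epsilon a \mp (1/(2\epsilon)) b}^2 \ge 0$ via $\n{x}^2 = x^\top x$ and bilinearity of the inner product gives $\epsilon^2 \n{a}^2 \mp a^\top b + (1/(4\epsilon^2)) \n{b}^2 \ge 0$. Rearranging yields exactly $\pm a^\top b \le \epsilon^2 \n{a}^2 + (1/(4\epsilon^2)) \n{b}^2$, with the hypothesis $\epsilon > 0$ ensuring the scalings are well defined. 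This is just Young's inequality specialized to the present notation.

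For the angle-sum identities I would exploit the composition law of planar rotations, $\m{R}(\varphi)\,\m{R}(\phi) = \m{R}(\varphi+\phi)$, where $\m{R}(\cdot)$ is the rotation matrix defined in Subsection~\ref{subsec:CIB}. Carrying out the $2\times 2$ product on the left and matching entries against $\m{R}(\varphi+\phi)$ reads off the $+$ variants of both \eqref{id: sine angle sum} and \eqref{id: cosine angle sum} simultaneously; the $-$ variants then follow by substituting $-\phi$ for $\phi$ and invoking the parities $\cos(-\phi)=\cos\phi$ and $\sin(-\phi)=-\sin\phi$. Equivalently, one may separate the real and imaginary parts of $e^{\mathrm{i}(\varphi+\phi)} = e^{\mathrm{i}\varphi} e^{\mathrm{i}\phi}$. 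Either route makes the $\pm/\mp$ pattern transparent, which is the single subtle point in the whole lemma.

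Finally, for the half-angle identities I would specialize \eqref{id: cosine angle sum} twice with both angles set to $\varphi/2$: the $+$ case gives the double-angle relation $\cos\varphi = \cos^2(\varphi/2) - \sin^2(\varphi/2)$, while the $-$ case gives the Pythagorean identity $\cos^2(\varphi/2) + \sin^2(\varphi/2) = 1$. Adding these two equations isolates $\cos^2(\varphi/2) = (1+\cos\varphi)/2$, and subtracting them isolates $\sin^2(\varphi/2) = (1-\cos\varphi)/2$, which are precisely \eqref{id: cosine half angle} and \eqref{id: sine half angle}. Since every step is a direct expansion or a linear combination of already-proved relations, I do not anticipate any genuine obstacle beyond keeping the signs consistent when toggling between the $+$ and $-$ forms of the angle-sum formulas.
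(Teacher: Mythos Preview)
Your argument is correct in every part: the completion-of-the-square derivation of \eqref{id:binomial} is the standard Young-inequality computation, the rotation-matrix (or Euler-formula) route to \eqref{id: sine angle sum}--\eqref{id: cosine angle sum} is valid, and specializing the cosine addition law with $\phi=\varphi/2$ to recover \eqref{id: sine half angle}--\eqref{id: cosine half angle} is clean. The paper itself does not supply a proof of this lemma at all; it is stated in the appendix purely as a catalogue of elementary identities to be cited elsewhere, so there is no authorial argument to compare against. Your write-up therefore goes beyond what the paper provides, and nothing in it is problematic.
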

%%------------------
\bibliographystyle{IEEEtran}
\bibliography{IEEEabrv,Ref}

\ifCLASSOPTIONcaptionsoff
  \newpage
\fi
\clearpage
\begin{IEEEbiography}[{\includegraphics[width=1in,height=1.25in,clip,keepaspectratio]{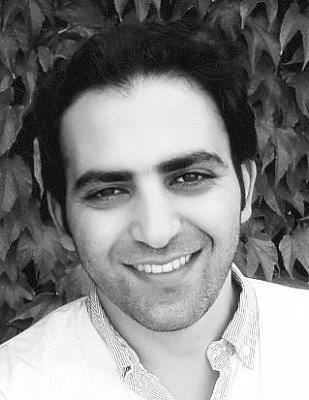}}]{Ali Tayyebi} received his BSc degree in electrical engineering from the University of Tehran, Iran in 2012. In 2014 he received his MSc degree in engineering mathematics (joint MATHMODS program) from University of L'Aquila, and University of Hamburg in Italy and Germany respectively. In 2016, he received his second MSc degree in sustainable transportation and electric power systems  (joint STEPS program) from La Sapienza, University of Nottingham and University of Oviedo respectively in Italy, UK and Spain. From 2014 to 2016, he was the recipient of EU scholarship for master studies. In 2016, he joined Austrian Institute of Technology (AIT) in Vienna, Austria as a master thesis candidate and afterward continued with AIT as research assistant. In 2017, he started his joint PhD project at AIT and Automatic Control Laboratory, Swiss Federal Institute of Technology (ETH) Zürich, Switzerland. 
His main research interest is the non-linear systems and control theory with applications to power system. In particular, his PhD research focuses on the design of grid-forming converter control for low-inertia power system. He has recently won the IEEE PES General Meeting 2020 best paper award.   
\end{IEEEbiography}

\begin{IEEEbiography}[{\includegraphics[width=1in,height=1.25in,clip,keepaspectratio]{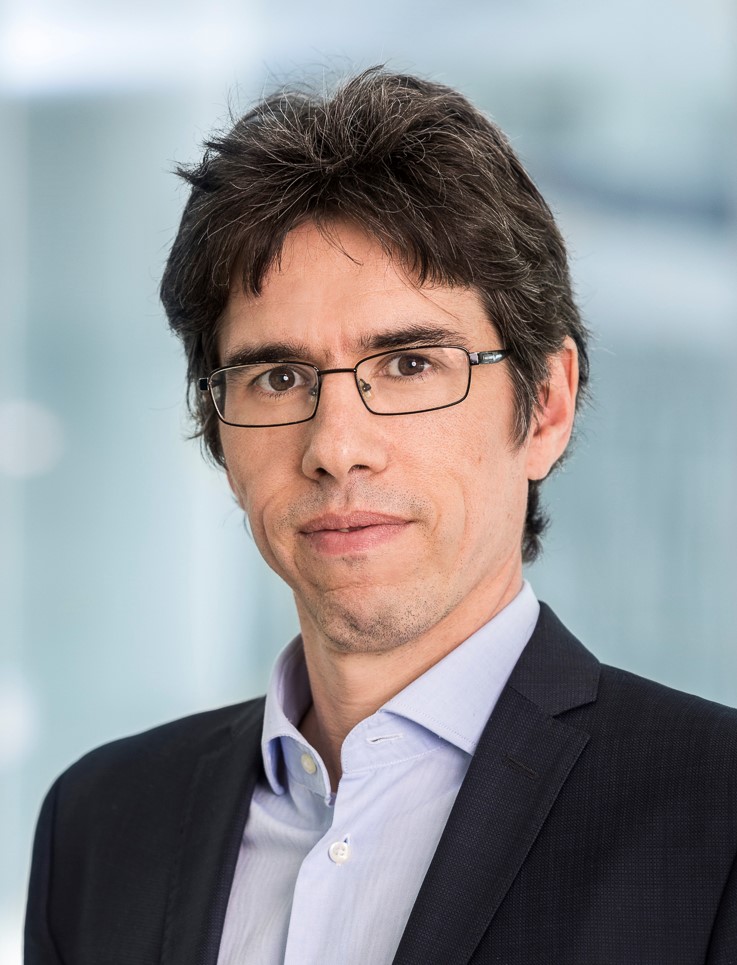}}]{Adolfo Anta}
	received the Licenciatura degree from ICAI Engineering School, Madrid, Spain, in 2002, and the M.Sc. and Ph.D. degrees from the University of California, Los Angeles, CA, USA, in 2007 and 2010, respectively. From 2002 to 2005, he was a Design Engineer with EADS-Astrium and, from 2010 to 2012, he was a Postdoctoral Researcher with the Technical University of Berlin and the Max Planck Institute, Germany. From 2012 to 2018 he worked as lead researcher at GE Global Research Europe, Germany. He is currently with Austrian Institute of Technology AIT as research engineer in Vienna, Austria. His research interests cover a wide range of control applications, in particular stability issues in power systems. Dr. Anta received the Fulbright Scholarship in 2005, the Alexander von Humboldt Fellowship in 2011, was a Finalist for the Student Best Paper Award at the IEEE Conference on Decision and Control in 2008, and received the 2010 EMSOFT Best Paper Award and the IEEE CSS George S. Axelby Award in 2011, and won the IEEE PES General Meeting 2020 best paper award.	
\end{IEEEbiography}

\begin{IEEEbiography}[{\includegraphics[width=1in,height=1.25in,clip,keepaspectratio]{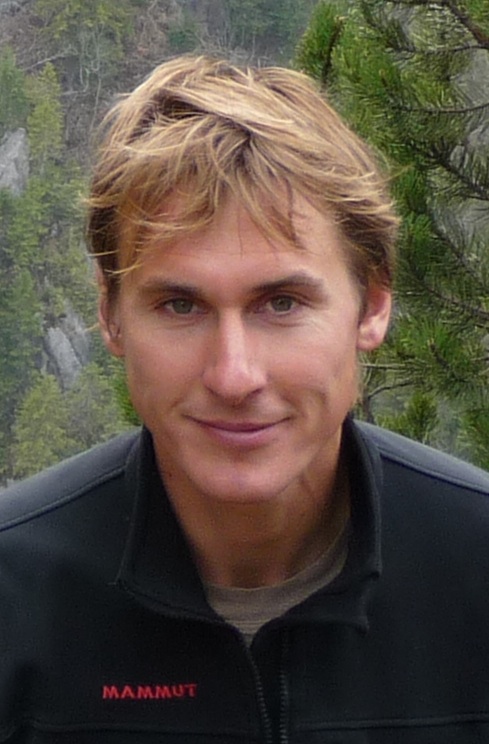}}]{Florian D\"{o}rfler} (S’09–M’13) Florian Dörfler is an Associate Professor at the Automatic Control Laboratory at ETH Zürich. He received his Ph.D. degree in Mechanical Engineering from the University of California at Santa Barbara in 2013, and a Diploma degree in Engineering Cybernetics from the University of Stuttgart in 2008. From 2013 to 2014 he was an Assistant Professor at the University of California Los Angeles. His primary research interests are centered around control, optimization, and system theory with applications in network systems such as electric power grids, robotic coordination, and social networks. He is a recipient of the distinguished young research awards by IFAC (Manfred Thoma Medal 2020) and EUCA (European Control Award 2020). His students were winners or finalists for Best Student Paper awards at the European Control Conference (2013, 2019), the American Control Conference (2016), the PES General Meeting (2020), and the PES PowerTech Conference (2017). He is furthermore a recipient of the 2010 ACC Student Best Paper Award, the 2011 O. Hugo Schuck Best Paper Award, the 2012-2014 Automatica Best Paper Award, the 2016 IEEE Circuits and Systems Guillemin-Cauer Best Paper Award, and the 2015 UCSB ME Best PhD award.	
\end{IEEEbiography}
\end{document}